\definecolor{darkgreen}{cmyk}{1,0,1,.2}
\definecolor{m}{rgb}{1,0.1,1}
\definecolor{green}{cmyk}{1,0,1,0}
\definecolor{darkred}{rgb}{0.55, 0.0, 0.0}
\definecolor{test}{rgb}{1,0,0}
\definecolor{cmyk}{cmyk}{0,1,1,0}
\numberwithin{diagram}{section}
\numberwithin{equation}{section}
\newtheorem{Equation}{}[section]
\newtheorem{theorem}[Equation]{Theorem}
\newtheorem{proposition}[Equation]{Proposition}
\newtheorem{lemma}[Equation]{Lemma}
\newtheorem{corollary}[Equation]{Corollary}
\newtheorem{definition}[Equation]{Definition}
\newtheorem{remark}[Equation]{Remark}
\def\Av{\operatorname{Av}}
\def\diag{\operatorname{diag}}
\def\Prop{\operatorname{Prop}}
\def\Im{\operatorname{Im}}
\def\Ker{\operatorname{Ker}}
\def\Proj{\operatorname{Proj}}
\def\Supp{\operatorname{Supp}}
\def\Ad{\operatorname{Ad}}
\def\id{\operatorname{id}}
\def\supp{\operatorname{supp}}
\def\maE{\mathcal{E}}
\def\maH{\mathcal{H}}
\def\maK{\mathcal{K}}
\def\maL{\mathcal{L}}
\def\maM{\mathcal{M}}
\def\maP{\mathcal{P}}
\def\del{\partial}
\def\C{\mathbb C}
\def\Z{\mathbb Z}
\def\N{\mathbb N}
\def\maE{{\mathcal E}}
\def\maM{{\mathcal M}}
\def\maH{{\mathcal H}}
\def\what{\widehat}
\definecolor{darkgreen}{cmyk}{1,0,1,.2}
\definecolor{m}{rgb}{1,0.1,1}
\definecolor{green}{cmyk}{1,0,1,0}
\definecolor{test}{rgb}{1,0,0}
\definecolor{cmyk}{cmyk}{0,1,1,0}
\def\diam{\operatorname{diam}}
\def\redg{\operatorname{red}}
\begin{document}

\title{The Higson-Roe sequence for \'etale groupoids.\\ II. The universal sequence for Equivariant families}

\author{Moulay-Tahar Benameur\\{IMAG, Univ Montpellier, CNRS, Montpellier, France}\\\textit{Email address}: \texttt{moulay.benameur@umontpellier.fr} \and
Indrava Roy\\Institute of Mathematical Sciences, HBNI, Chennai, India\\\textit{Email address}:\texttt{indrava@imsc.res.in}}

\maketitle

\begin{abstract}
This is the second part of our series about the Higson-Roe sequence for \'etale groupoids. We devote this part to the proof of the universal $K$-theory surgery exact sequence which extends the seminal results of N. Higson and J. Roe to the case of transformation groupoids. In the process, we prove the expected functoriality properties as well as the Paschke-Higson duality theorem.
\end{abstract}

\textit{Mathematics Subject Classification (2010)}. 19K33, 19K35, 19K56, 58B34.\\

\textit{Keywords}: $K$-theory of $C^*$-algebras, Actions of discrete groups, Morita equivalence, Baum-Connes conjecture, Higson-Roe analytic surgery sequence.

\section{Introduction}

We pursue in this paper our systematic investigation of the secondary invariants associated with groupoids. Our approach follows the deep program initiated by N. Higson and J. Roe in their seminal papers \cite{HR-I, HR-II, HR-III}. The present paper is the second of our series and is devoted to the statement of the functoriality properties for our dual Roe algebras as well as to the proof of the Paschke-Higson duality isomorphism. As a corollary of these constructions, we could obtain  the proof of the existence of the universal Higson-Roe sequence for our \'etale groupoids.  We have assumed   that our groupoid $G$ is the transformation groupoid $X\rtimes \Gamma$ associated with the action of the discrete countable (infinite) group $\Gamma$ on the metrizable space $X$. These transformation groupoids are known to be  generic  in the study of the secondary invariants of foliations and laminations, see for instance \cite{BN, BenameurPiazza, ConnesBook}. Many constructions are though ready to be generalized to any \'etale Hausdorff groupoid and the details of this extension will appear in a forthcoming paper. \\

In the first paper of this series  \cite{BenameurRoyI}, we have introduced the dual Roe algebras for \'etale groupoids and we have deduced the Higson-Roe exact sequence as well as its compatibility with the Baum-Connes and  the Paschke morphisms. With the proof of the Paschke isomorphism and of the functoriality of our algebras carried out in the present paper for transformation groupoids, we complete the picture and obtain  the $K$-theoretic surgery exact sequence for these groupoids.  Let us now explain more precisely our results.
For any proper  $\Gamma$-space $Z$ together with some usual data, we denote by $D^*_\Gamma (X; (Z, L^2Z\otimes \ell^2\Gamma^\infty))$ the dual Roe algebra obtained in this case following \cite{BenameurRoyI}  (see Section  \ref{GammaFamilies}).  We first prove the  functoriality  of these $D^*_\Gamma$ algebras  for continuous {\em{$\Gamma$-coarse}} maps as well as of their $C^*_\Gamma$ ideals for Borel {\em{$\Gamma$-coarse}} maps.  To this end, we have  used the notion of Roe-Voiculescu covering isometries described in an independent appendix. As a corollary, we can  introduce the allowed structure groups for the groupoid $X\rtimes \Gamma$ as follows:
$$
S_{*+1} (X\rtimes \Gamma) \; := \; \lim_{\stackrel{\longrightarrow}{Z\subset \underline{E}\Gamma}} K_{*}\left(D^*_\Gamma (X ; (Z, L^2Z\otimes \ell^2\Gamma^\infty))\right), \text{ for }*=0, 1\in \Z_2,
$$
where $ \underline{E}\Gamma$ is any locally compact universal space for proper $\Gamma$-actions. Along the process, our techniques also allow to define this $K$-theory group by using the $D^*_\Gamma$ algebra defined with respect to any fiberwise ample Hilbert module, although the isomorphism hence obtained is not natural. The above direct limit is taken as usual with respect to inclusion of $\Gamma$-invariant cocompact closed subspaces $Z\subseteq \underline{E}\Gamma$, and using the system of group morphisms
$$
i_{Z'\subset Z}^D : K_*(D^*_\Gamma (X; (Z', L^2Z'\otimes \ell^2\Gamma^\infty)))\stackrel{}{\longrightarrow} K_*(D^*_\Gamma (X; (Z, L^2Z\otimes \ell^2\Gamma^\infty))).
$$

This definition is in the spirit of the Baum-Connes assembly map
$$
\mu_*^\Gamma\; :\;  RK_* (X\rtimes \Gamma) \longrightarrow K_*( C(X)\rtimes \Gamma),
$$
which is conjectured to be an isomorphism. Recall that the left hand side  is the limit
$$
RK_* (X\rtimes \Gamma) \; := \;   \lim_{\stackrel{\longrightarrow}{Z\subset \underline{E}\Gamma}} KK_\Gamma^* (Z, X).
$$
The main result of  the present paper is thus the following generalization of the Higson-Roe universal exact sequence\\

{\bf{Theorem.}} \label{Surgeryseq} {\em{There exists a periodic six-term exact sequence:

$$
\begin{diagram}
       \node{K_0(C(X)\rtimes_{\redg}\Gamma)}\arrow{e}\node{S_1(X\rtimes\Gamma)}\arrow{e}\node{RK_1(X\rtimes\Gamma)}\arrow{s}\\
\node{RK_0(X\rtimes\Gamma)}\arrow{n}\node{S_0(X\rtimes\Gamma)}\arrow{w} \node{K_1(C(X)\rtimes_{\redg}\Gamma)}\arrow{w}
\end{diagram}
$$
where the vertical boundary maps are  the Baum-Connes assembly maps $(\mu^{BC}_i)_{i=0,1}$  for the groupoid $X\rtimes \Gamma$.}}\\

\medskip

{{In the first three sections of this paper, we  do not assume the (proper) action to be cocompact, nor do we assume any Lie structure on our groupoids, so our results are valid in a wide enough topological category in the spirit of the coarse approach to primary and secondary index invariants \cite{BGL, HRbook, HigsonPedersenRoe}. As explained above, using cocompact proper actions, we proved that the Higson-Roe exact sequence includes the classical Baum-Connes map for our groupoid. Under the extra assumption of a Lie structure on the groupoid, another exact sequence was obtained by V. Zenobi using pseudodifferential calculus on adiabatic deformations, in the spirit of the Connes' tangent groupoid approach, see \cite{Zenobi19}. When $X$ is reduced to the point for instance, the Zenobi exact sequence turns out to be isomorphic to the classical Higson-Roe exact sequence, see \cite{ZenobiPreprint}. For transformation Lie groupoids, it is easy to see for instance that when the group is torsion free, the  Zenobi sequence is again isomorphic to the one obtained in the present paper. It is thus an interesting task to compare, in this smooth case, our Higson-Roe exact sequence with Zenobi's sequence when modified to take torsion into account. }} \\

As a corollary of the above Theorem  \ref{Surgeryseq}, our structure groups $S_*(X\rtimes\Gamma)$ appear as the precise obstructions for the Baum-Connes conjecture to hold. In view of invariants of eta type, see again \cite{BenameurPiazza} or \cite{BGL}, the following  exact sequence will thus play an important part \cite{BenameurRoyJFA}
$$
K_0(C(X)\rtimes_{\redg}\Gamma)\rightarrow S_1(X\rtimes\Gamma)\rightarrow RK_1(X\rtimes\Gamma)\rightarrow K_1(C(X)\rtimes_{\redg}\Gamma)
$$
In order to achieve the proof of Theorem \ref{Surgeryseq}, an important step is the  Paschke-Higson duality theorem, which can be stated as follows:\\

{\bf{Theorem.}} \label{Paschkev2}
{\em{With the above notations and assuming the the proper $\Gamma$-space $Z$ is cocompact, the Paschke-Higson map defined in \cite{BenameurRoyI} gives group isomorphisms
$$
K_i(Q^*_\Gamma(X; (Z, \ell^2\Gamma^\infty\otimes L^2Z))) \xrightarrow{\maP} KK_{\Gamma}^{i+1}(Z, X), \quad i\in \Z_2.
$$}}

The groups $KK_{\Gamma}^{i+1}(Z, X)$ are the $KK$-groups and can be described in purely topological terms, see for instance \cite{BaumConnes, BaumConnesHigson, ConnesBook}.  Passing to the inductive limit, we obtain the allowed universal Paschke-Higson isomorphism which identifies the LHS group $RK_* (X\rtimes \Gamma)$ in the Baum-Connes map with an inductive limit of dual $C^*$-algebra $K$-theory groups. \\

The proof of  Theorem \ref{Surgeryseq} then relies on an inspection of the functoriality properties of all the involved morphisms. More precisely, the following cube is shown to be commutative for inclusions $Z'\hookrightarrow Z$ of cocompact closed $\Gamma$-subspaces, and for $i\in \Z_2$:\\

\begin{tiny}
\begin{tikzcd}[row sep=tiny, column sep=tiny]\hspace{-3cm}
& K_i(Q^*_\Gamma (X; (Z', L^2Z'\otimes \ell^2\Gamma^\infty))) \arrow[dl] \arrow[rr] \arrow[dd] & & K_i(Q^*_\Gamma (X; (Z, L^2Z\otimes \ell^2\Gamma^\infty))) \arrow[dl] \arrow[dd] \\
K_{i+1}(C^*_\Gamma (X; (Z', L^2Z'\otimes \ell^2\Gamma^\infty))) \arrow[rr, crossing over] \arrow[dd] & &  K_{i+1}(C^*_\Gamma (X; (Z, L^2Z\otimes \ell^2\Gamma^\infty)))\\
& KK^{i+1}_\Gamma (Z',X) \arrow[dl] \arrow[rr] & & KK^{i+1}_\Gamma (Z, X) \arrow[dl] \\
K_{i+1}(C(X) \rtimes_{\redg}\Gamma) \arrow[rr] & & K_{i+1}(C(X) \rtimes_{\redg}\Gamma) \arrow[from=uu, crossing over]\\
\end{tikzcd}
\end{tiny}

\medskip

\tableofcontents

\bigskip

{\bf{Notations.}}
For simplicity, all topological spaces used in this paper will be  locally compact Hausdorff and second countable, hence our spaces will always be paracompact. Given such space $Z$, we shall denote as usual by $C_0(Z)$ the $C^*$-algebra of complex valued continuous functions on $Z$ which vanish at infinity, while $C_c(Z)$ will be the subalgebra composed of the compactly supported functions. We shall also use the bigger multiplier algebra $C_b(Z)$ composed of the bounded continuous complex valued functions on $Z$.
For a given $C^*$-algebra $B$ and unless otherwise specified, all Hilbert $B$-modules  will be countably generated as $B$-modules. In particular, all Hilbert spaces will be countably generated.  Given  Hilbert $B$-modules $E$ and $E'$, we shall abusively denote by $\maL_B(E, E')$ the space of adjointable operators from $E$ to $E'$, so in particular such operators are $B$-linear and bounded. The subspace of $B$-compact operators will be denoted by $\maK_B(E, E')$, and when $E'=E$, we obtain the $C^*$-algebras which are rather denoted $\maL_B(E)$ and $\maK_B(E)$. Recall that  $\maK_B(E)$ is a closed two-sided involutive ideal in $\maL_B(E)$. When $B=\C$ is the $C^*$-algebra of the complex numbers, then it is simply dropped from the notation.  The notation $\maL (H)_{*-str}$ will  be used to emphasize that the space $\maL (H)$ is rather endowed with the $*$-strong topology. So, for instance and given a topological space $X$,  $C(X, \maL (H)_{*-str})$ is the space of continuous functions from $X$ to $\maL(H)$  endowed with the $*$-strong topology.
Given a group $\Gamma$ which acts on a set $\mathfrak{A}$, we shall denote as it is customary by $\mathfrak{A}^\Gamma$ the subset of $\mathfrak{A}$ composed of the $\Gamma$-invariant elements.\\

\medskip

{\em{Acknowledgements.}}
The authors  wish to thank P.S. Chakraborty, T. Fack, N. Higson, V. Mathai,  P. Piazza, B. Saurabh, G. Skandalis, R. Willett and V. Zenobi
for many helpful discussions. We are especially indebted to the referee for having read carefully this manuscript, and for her/his comments to improve it, especially the important suggestion of removing the cocompactness condition in all the statements of the first three sections.
MB thanks the French National Research Agency for support via the ANR-14-CE25-0012-01 (SINGSTAR).
IR thanks the Homi Bhabha National Institute and the Indian Science and Engineering Research Board via MATRICS project MTR/2017/000835 for support.

%


\section{Review of the Higson-Roe sequence}\label{GammaFamilies}

We review in this first section our results about dual algebras and the Higson-Roe sequence for {\'{e}}tale groupoids, see  \cite{BenameurRoyI}. The present paper concentrates on the case of \'etale groupoids which are associated with countable discrete group actions, so we briefly recall these results for such groupoids.

We consider a countable discrete group $\Gamma$ which acts on the right by homeomorphisms of the compact metrizable finite dimensional  space $X$. The groupoid $G$ is the action groupoid $X\rtimes \Gamma$ whose space of units is  $X$ and whose space of arrows is $X\times \Gamma$ with the following rules:
$$
s(x, \gamma) = x\gamma, \; r(x, \gamma) = x, \; \text{ and }\; (x, \gamma) (x', \gamma') = (x, \gamma\gamma') \text{ if } x'=x\gamma.
$$
The groupoid $G$   desingularizes   the space of leaves of a lamination which is constructed by suspending the action, through the so-called foliation monodromy groupoid, see for instance \cite{BenameurPiazza}.
Let $(Z, d)$ be a given locally compact proper-metric space which is endowed with an action of our group $\Gamma$. We assume furthermore that $\Gamma$ acts properly  on $Z$ and consider the $\Gamma$-space $Y= X\times Z$ which is then a proper $\Gamma$-space.

Let $(H, U)$ be a unitary Hilbert space representation of $\Gamma$ together with an ample $\Gamma$-equivariant {$C(X)$}-representation $\pi$ of {$C_0(Y)$}.
Recall that any adjointable operator $T$ of $ \maL_{C(X)} (C(X)\otimes H)$ is given by a field $(T_x)_{x\in X}$ of bounded operators on $H$ which is $*$-strongly continuous.
For instance, for {a general $C(X)$-representation $\pi$ of $C_0(Y)$ and} any $f\in C_0(Y)$, the operator $\pi (f)$ can be written as the $*$-strongly continuous field {$(\pi_x(f_x))_{x\in X}$} where each $\pi_x$ is a representation of {$C_0(Z)$} in the Hilbert space $H$. Moreover, it is easy to see that {$\pi_x(f_x)$} only depends on the restriction {$f_x$} of $f$ to $\{x\}\times Z$. An adjointable operator is  $\Gamma$-equivariant, if the field $(T_x)_{x\in X}$ satisfies the relations
$$
T_{x g} = U_g^{-1} T_x U_g, \quad  (x, g)\in X\times \Gamma.
$$
The space of $\Gamma$-equivariant adjointable operators is denoted as usual $\maL_{C(X)} (C(X)\otimes H)^\Gamma$. We denote by $D^*_\Gamma (X; (Z,H))$ and $C^*_\Gamma (X; (Z, H))$ the corresponding Roe algebras as defined in \cite{BenameurRoyI}, but for our groupoid $X\rtimes \Gamma$ and our specific Hilbert $G$-module $C(X)\otimes H$. More precisely,
$D^*_\Gamma (X; (Z,H))$ is defined as the norm closure in $\maL_{C(X)} (C(X)\otimes H)$ of the following space
$$
\{T\in \maL_{C(X)} (C(X)\otimes H)^\Gamma, T \text{ has finite propagation and }[T, \pi (f)]\in C(X, \maK(H)) \text{ for any }f\in C_0(Y)\}.
$$
The ideal $C^*_\Gamma (X; (Z, H))$ is composed of all the elements $T$ of $D^*_\Gamma (X; (Z,H))$ which satisfy in addition that
$$
T \pi (f) \in C(X, \maK(H)) \text{ for any }f\in C_0(Y).
$$
The finite propagation property here is supposed to hold uniformly on $X$, so $(T_x)_{x\in X}$ has finite propagation if there exists a constant $M\geq 0$ such that
for any $\varphi, \psi\in C_0(Z)$ with $d (\Supp (\varphi), \Supp (\psi)) > M$, we have
$$
\pi_x (\varphi) T_x \pi_x (\psi) = 0, \; \; \forall x\in X.
$$
We thus have the short exact sequence of $C^*$-algebras
$$
0 \to C^*_\Gamma (X; (Z, H)) \hookrightarrow D^*_\Gamma (X; (Z,H)) \longrightarrow Q^*_\Gamma (X; (Z,H)) \to 0,
$$
where we have denoted by $Q^*_\Gamma (X; (Z,H))$  the quotient $C^*$-algebra of $D^*_\Gamma (X; (Z,H))$ by its two-sided closed involutive  ideal $C^*_\Gamma (X; (Z, H))$.
Applying the topological $K$-functor, we end up with well defined boundary maps
$$
\partial_i \;:\; K_i\left(Q^*_\Gamma (X; (Z,H))\right) \longrightarrow K_{i+1}\left( C^*_\Gamma (X; (Z, H))\right), \quad i\in \Z_2.
$$
which fit in  the following  periodic six-term exact sequence of topological $K$-theory groups
\begin{displaymath}\label{Figure1}
\xymatrixcolsep{1pc}\xymatrix{
K_{*} (C^*_\Gamma (X; (Z, H))) \ar[rr]   &  & K_{*} (D^*_\Gamma (X; (Z,H)))   \ar[dl]^{}\\ & K_*(Q^*_\Gamma (X; (Z,H)))  \ar[ul]_{\partial_*}  }
\end{displaymath}
The Paschke-Higson map  can be described for our specific groupoid $G=X\rtimes \Gamma$ as the two group morphisms
$$
\maP_i^Z \;:\;  K_i\left( Q^*_\Gamma (X; (Z,H))\right) \longrightarrow KK^{i+1}_\Gamma ( Z, X), \quad i\in \Z_2,
$$
where $K_i( Q^*_\Gamma (X; (Z,H)))$ is $K$-theory of the $C^*$-algebra $Q^*_\Gamma (X; (Z,H))$ while $KK^{i+1}_\Gamma ( Z, X)$ is the $\Gamma$-equivariant $KK$-theory of the pair of $\Gamma$-algebras $C_0(Z), C(X)$ \cite{Kasparov2}.  It is defined as follows (see again   \cite{BenameurRoyI}).
For $i=0$ for instance and starting with a projection $E$ in $Q^*_\Gamma (X; (Z,H))$, the image of the class of $E$ under $\maP_0^Z$ is the class of the $\Gamma$-equivariant Kasparov odd cycle
$$
\left(C(X)\otimes H, \pi_Z, 2E-I\right),
$$
where the representation $\pi_Z$ is $\pi\circ p_2^*$ with $p_2^*:C_0(Z) \to C_0(Y)$ the morphism induced by the (proper) second projection $p_2:Y\to Z$. A similar definition yields for $i=1$ to the Paschke-Higson map $\maP_1$.
\begin{remark}\
According to \cite{BenameurRoyI}, the range of the Paschke map is expected to be the $G$-equivariant $KK$-theory of the pair of $G$-algebras $C_0(X\times Z), C(X)$.
However, it is easy to see that in our case, this latter group is isomorphic to the $\Gamma$-equivariant $KK$-theory of the pair of $\Gamma$-algebras $C_0(Z), C(X)$, i.e.
$$
KK^{*}_\Gamma ( Z, X)\simeq KK^{*}_{X\rtimes\Gamma} ( X\times Z, X).
$$
{{Notice that any $\Gamma$-equivariant Hilbert $C(X)$-module can be seen as a $X\rtimes\Gamma$-module. The previous isomorphism is obtained by using the obvious left action of $C(X)$ on any Hilbert $C(X)$-module, and reciprocally by tensoring with the unit of $C(X)$.}}
\end{remark}

An interesting representation $\pi$ is given by $L^2 (Z)=L^2 (Z, \mu_Z)$ for a choice of a Borel $\Gamma$-invariant measure $\mu_Z$ on $Z$, which we shall always assume to be fully supported.
Finally, recall also {{that in the case where the proper  $\Gamma$-space  $Z$ is cocompact}}, we have the classical Baum-Connes map   (see \cite{BaumConnes}):
$$
\mu_{\Gamma}^Z: KK^{*}_\Gamma ( Z, X)\longrightarrow  K_* (C(X)\rtimes_r \Gamma),
$$
where $C(X)\rtimes_r \Gamma$ is the reduced crossed product $C^*$-algebra. {{When $Z$ is not cocompact, one needs to replace the LHS by an inductive limit as we shall explain later on. The main  results that will be needed from the companion paper \cite{BenameurRoyI} concern the case of cocompact actions, and can be gathered as follows}}:

\begin{theorem}\cite{BenameurRoyI}
{If the $\Gamma$-proper space $Z$ is cocompact, then}
\begin{enumerate}
\item  The $C^*$-algebra $C^*_\Gamma (X; (Z, L^2Z\otimes \ell^2\Gamma))$ is Morita equivalent to the reduced crossed product $C^*$-algebra $C(X)\rtimes_r \Gamma$. In particular, we have  isomorphisms
$$
\maM_i: K_i (C^*_\Gamma (X; (Z, L^2Z\otimes \ell^2\Gamma)))\longrightarrow K_i(C(X)\rtimes_r \Gamma), \quad i=0, 1.
$$
\item For $i=0, 1$, the following diagram commutes
\[
\begin{CD}\label{BaumConnesassembly}
K_i(Q^*_\Gamma (X; (Z,L^2Z\otimes \ell^2\Gamma))) @> \del_i   >> K_{i+1}(C^*_\Gamma (X; (Z, L^2Z\otimes \ell^2\Gamma))) \\
@V\maP_i VV    @V\maM_{i+1}VV \\
KK^{i+1}_\Gamma ( Z, X) @> \mu_{\Gamma}^Z >> K_{i+1}(C(X)\rtimes \Gamma)\\
\end{CD}
\]

\end{enumerate}
\end{theorem}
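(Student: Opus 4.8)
\emph{Proof proposal.}

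\medskip

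\noindent For the first assertion, the plan is the cutoff-function construction, adapted to the family algebras over the base $X$. Since the proper $\Gamma$-action on $Z$ is cocompact, fix $c\in C_c(Z)$ with $c\geq 0$ and $\sum_{\gamma\in\Gamma}c(z\gamma)^2=1$ for all $z\in Z$, and work with the concrete ample module $H=L^2Z\otimes\ell^2\Gamma$, where $C_0(Z)$ acts fibrewise by multiplication on the $L^2Z$ factor and $U_\gamma$ is the tensor product of the $\Gamma$-action on $L^2Z$ with the right regular representation on $\ell^2\Gamma$; this is $\Gamma$-equivariantly ample because $\mu_Z$ is fully supported. First I would form the Hilbert $C(X)\rtimes_r\Gamma$-module $\maE$ obtained by completing the dense subspace $\pi_Z(C_c(Z))\,(C(X)\otimes H)$ with respect to the $C(X)\rtimes_r\Gamma$-valued inner product $\langle\xi,\eta\rangle(\gamma)=\langle\xi,\gamma\cdot\eta\rangle_{C(X)}$, noting that properness of the action together with the compact supports makes each such inner product a finitely supported function on $\Gamma$ and that positive-definiteness is immediate. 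Then I would check that the left action of $C^*_\Gamma(X;(Z,H))$ on $\maE$ by restriction identifies this algebra with $\maK_{C(X)\rtimes_r\Gamma}(\maE)$: the inclusion $\subseteq$ is a finite-propagation and fibrewise local-compactness estimate, exactly as in the non-equivariant Roe-algebra case, while $\supseteq$ follows from approximating a general element of $C^*_\Gamma$ by operators of the form $\pi_Z(\varphi)T\pi_Z(\psi)$ and recognising these as rank-one operators on $\maE$. Finally, cocompactness enters through the cutoff $c$ to prove that $\maE$ is full, after which Morita invariance of $K$-theory yields the isomorphisms $\maM_i$.

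\medskip

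\noindent For the second assertion, the plan is to evaluate both composites on an explicit cycle and to match them; this computation is the technical core of the argument. Take $i=0$ and a class represented by a projection $E\in Q^*_\Gamma(X;(Z,H))$; lift it to a self-adjoint $F\in D^*_\Gamma(X;(Z,H))$ with $0\leq F\leq 1$ and $F^2-F\in C^*_\Gamma(X;(Z,H))$. Along the lower-left route, the exponential boundary map gives $\partial_0[E]=[\,e^{2\pi i F}\,]\in K_1(C^*_\Gamma(X;(Z,H)))$, and $\maM_1$ transports this unitary through $\maE$. Along the upper-right route, $\maP_0[E]$ is the class of the Kasparov cycle $(C(X)\otimes H,\pi_Z,2E-I)$, and $\mu_\Gamma^Z$ sends it to $[p_c]\otimes_{C_0(Z)\rtimes_r\Gamma}j_\Gamma(\maP_0[E])\in K_1(C(X)\rtimes_r\Gamma)$, where $j_\Gamma$ is Kasparov descent and $[p_c]\in K_0(C_0(Z)\rtimes_r\Gamma)$ is the class of the cutoff projection built from $c$. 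Computing this interior product, the representing module collapses via multiplication by $c$ onto $\maE$ and the operator collapses to the $c$-compression of $2E-I$; the task is to show, by a Connes--Skandalis connection argument together with the finite-propagation and local-compactness conditions and an operator homotopy, that this represents the same class as $\maM_1(e^{2\pi i F})$.

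\medskip

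\noindent A perhaps cleaner route, which I would attempt first, is to realise $\mu_\Gamma^Z$ itself (in the cocompact case) as the $K$-theory boundary map of a short exact sequence of $\Gamma$-$C^*$-algebras, and to fit that extension into a commutative ladder with $0\to C^*_\Gamma(X;(Z,H))\to D^*_\Gamma(X;(Z,H))\to Q^*_\Gamma(X;(Z,H))\to 0$ and the Paschke--Higson map, so that the square becomes an instance of naturality of the boundary map. The real work is then pushed into constructing that ladder from the cutoff data and verifying that it commutes at the level of $C^*$-algebra morphisms; I expect this identification to be the main obstacle regardless of the route taken. The odd case $i=1$ is handled identically with unitaries replacing projections, or is deduced by a suspension argument, which completes the proof as recorded in \cite{BenameurRoyI}.
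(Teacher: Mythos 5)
You should note first that the paper you were given does not actually prove this statement: it is quoted, with proof, from the companion paper \cite{BenameurRoyI} (the theorem carries that citation, and the text says these are ``the main results that will be needed from the companion paper''). What the present paper does reveal about the intended proof of part (1) is visible in Proposition \ref{Moritasquare}: the Morita equivalence is implemented by the Hilbert $C(X)\rtimes_{\redg}\Gamma$-module $L^2_{X\rtimes\Gamma}(X\times Z)$ of \cite{BenameurRoyI}[Proposition 2.16], together with the isomorphism $\Phi^{X\times Z}_*$ identifying $C^*_\Gamma(X;(Z,L^2Z\otimes\ell^2\Gamma))$ with the compact operators on that module, fullness coming from cocompactness via a cut-off function. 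Your construction of $\maE$ from $\pi_Z(C_c(Z))(C(X)\otimes H)$ with the $C(X)\rtimes_r\Gamma$-valued inner product is essentially the same route, so part (1) of your proposal is in line with the paper's (modulo the small point that positivity of the inner product and the identification $C^*_\Gamma\cong\maK_{C(X)\rtimes_r\Gamma}(\maE)$ deserve an argument rather than being ``immediate'', and that the right module action of $C(X)\rtimes_r\Gamma$ must be spelled out).

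For part (2), however, your proposal has a genuine gap: it is a plan, not a proof. The whole content of the commutativity of the square is the identification of $\maM_{i+1}(\partial_i[E])$ with $\mu_\Gamma^Z(\maP_i[E])$, and at exactly that point you write that ``the task is to show, by a Connes--Skandalis connection argument together with \dots an operator homotopy, that this represents the same class'' and that you ``expect this identification to be the main obstacle regardless of the route taken.'' Neither route is carried out: in the first, the Kasparov product $[p_c]\otimes_{C_0(Z)\rtimes_r\Gamma} j_\Gamma(\maP_0[E])$ is never computed and compared with the image of the exponential $e^{2\pi i F}$ under $\maM_1$ (this requires an explicit connection for the module $\maE$ and a concrete homotopy, and it is where all the finite-propagation and local-compactness estimates actually get used); in the second, the claim that $\mu_\Gamma^Z$ can be realized as the boundary map of a short exact sequence of $\Gamma$-$C^*$-algebras fitting into a commuting ladder with $0\to C^*_\Gamma\to D^*_\Gamma\to Q^*_\Gamma\to 0$ is precisely the statement to be proved, and no such extension or ladder is constructed. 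So while the strategy is the standard and correct one (and is consistent with what \cite{BenameurRoyI}[Section 3] is reported to do, namely showing that $\partial$ composed with the inverse Paschke map is the Baum--Connes map), the decisive computation that constitutes assertion (2) is missing from your write-up.
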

\bigskip


\medskip

\section{Functoriality of dual algebras}

We proceed now to establish the functoriality of the $K$-theory groups of the Roe $C^*$-algebras $C^*_\Gamma (X; (Z, H))$, $D^*_\Gamma (X; (Z,H))$ and $Q^*_\Gamma (X; (Z,H))$,
corresponding to appropriate  classes of maps $f: Z'\rightarrow Z$. We first prove these {functoriality} results for  the $C^*$-algebras $C^*_\Gamma (X; (Z, H))$. Later on we shall show that under the extra continuity assumption  of the maps $f$, the functoriality properties hold as well for the $C^*$-algebras $D^*_\Gamma (X; (Z, H))$ and the quotient $C^*$-algebras $Q^*_\Gamma (X; (Z, H))$.

\subsection{Functoriality properties of the Roe ideal}

Recall that $X$ is a compact metrizable space (of finite dimension). We consider spaces $Y$ which are given as $Y=X\times Z$, for spaces $Z$ which are  proper-metric spaces on which
$\Gamma$ {{acts properly}}, as isometric homeomorphisms.  We always assume that our metric spaces are proper-metric spaces.
Recall that if $(Z, d)$ and $(Z', d')$ are metric spaces, then a map $f:Z'\rightarrow Z$ is metrically proper when the inverse images of bounded sets in {$Z$}  are bounded in {$Z'$}.
A metrically proper {Borel} map  $f:Z'\to Z$ is called a coarse map if given any $R>0$, there exists $S>0$ such that
 $$
 d' (z'_1,z'_2)\leq R \Longrightarrow d (f(z'_1),f(z'_2)) \leq S \quad \text{ for all } z_1',z_2' \in Z'.
 $$
Two coarse maps $f_1, f_2: Z'\rightarrow Z$ are called coarsely equivalent if there exists a constant $M>0$ such that
 $$
 d (f_1(z'), f_2(z')) \leq M \quad \text{ for all } z'\in Z'
 $$
 Suppose that the proper-metric spaces $(Z',d')$ and $(Z,d)$ are  endowed with the action of our countable discrete group $\Gamma$, again by isometries. Then a given map $f:Z'\to Z$ is coarsely equivariant if there exists a constant $M\geq 0$ such that
 $$
 d_Z (f(gz'), gf(z')) \leq M,\quad \forall g\in \Gamma\text{ and } \forall z'\in Z'.
 $$


\begin{definition}\label{Gcoarse}\label{coarse-equiv1}\ Suppose that the proper-metric spaces $(Z',d')$ and $(Z,d)$ are  endowed with the action of $\Gamma$ by isometries.
\begin{enumerate}
\item A metrically proper Borel map
$f:Z'\rightarrow Z$ is called a coarse $\Gamma$-map, if it is  coarse and coarsely equivariant.
 \item  The proper-metric $\Gamma$-spaces  $(Z',d')$ and $(Z,d)$  are $\Gamma$-coarsely equivalent if there exist coarse $\Gamma$-maps $f: Z'\rightarrow Z$ and
$g:Z\rightarrow Z'$ such that $f\circ g$ is coarsely equivalent to the identity on $Z$ and $g\circ f$ is coarsely equivalent to the identity on $Z'$.
 \end{enumerate}
\end{definition}

If $H$ is a given Hilbert space, we shall denote by $H^\infty$ the Hilbert space $\ell^2 (\N, H)\simeq H\otimes \ell^2\N$.
Given a $\Gamma$-equivariant faithful Hilbert space representation $(H, \pi)$ of $C_0(Z)$, we get an
ample $\Gamma$-equivariant representation $(\ell^2\Gamma\otimes H^\infty\simeq H\otimes \ell^2\Gamma^\infty, \pi^\infty)$
by tensoring by the identity on $\ell^2\N$ and further tensoring by the right regular representation of $\Gamma$ on $\ell^2\Gamma$. \\

\begin{theorem}\label{GammaCoarsly}\
 If $(Z', d')$ and $(Z, d)$ are {{$\Gamma$-proper  metric spaces}} with $\Gamma$-invariant fully supported Borel measures $\mu_{Z'}$ and $\mu_Z$ respectively,  such that $(Z', d')$ and $(Z, d)$ are $\Gamma$-coarsely equivalent. Then
  we have a group isomorphism:
 $$
 K_*\left(C^*_\Gamma (X; (Z', L^2Z'\otimes \ell^2\Gamma^\infty))\right) \xrightarrow{\cong} K_*\left(C^*_\Gamma (X; (Z, L^2Z\otimes \ell^2\Gamma^\infty))\right).
 $$
\end{theorem}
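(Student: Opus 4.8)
\medskip
\noindent\textbf{Proof strategy.}
The plan is to realise the isomorphism by conjugation with Roe--Voiculescu covering isometries and then to check that the resulting assignment on $K$-theory is functorial, so that it inverts itself. Write $\pi$ and $\pi'$ for the ample $\Gamma$-equivariant $C(X)$-representations of $C_0(X\times Z)$ and $C_0(X\times Z')$ on $C(X)\otimes(L^2Z\otimes\ell^2\Gamma^\infty)$ and $C(X)\otimes(L^2Z'\otimes\ell^2\Gamma^\infty)$ respectively; the hypothesis that $\mu_Z$ and $\mu_{Z'}$ are $\Gamma$-invariant and fully supported is exactly what makes these representations faithful and ample, which is the input required below. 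Fix coarse $\Gamma$-maps $f:Z'\to Z$ and $g:Z\to Z'$ realising the $\Gamma$-coarse equivalence, so that $f\circ g$ and $g\circ f$ are coarsely equivalent to $\mathrm{id}_Z$ and $\mathrm{id}_{Z'}$ respectively.

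First I would invoke the covering-isometry construction of the appendix to attach to each coarse $\Gamma$-map $f:Z'\to Z$ a $\Gamma$-equivariant isometry $V_f\in\maL_{C(X)}\big(C(X)\otimes(L^2Z'\otimes\ell^2\Gamma^\infty),\,C(X)\otimes(L^2Z\otimes\ell^2\Gamma^\infty)\big)$ which is $f$-controlled (finite propagation relative to $f$, uniformly in $x\in X$) and which covers $f$ in the sense that $\pi(\varphi)V_f\equiv V_f\pi'(\varphi\circ f)\pmod{C(X,\maK)}$ for every $\varphi\in C_0(Z)$ --- here $\varphi\circ f$ is a bounded Borel function supported on the precompact set $f^{-1}(\Supp\varphi)$ and $\pi'$ is applied through its extension to bounded Borel functions. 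Then I would check that $\mathrm{Ad}(V_f)\colon T'\mapsto V_fT'V_f^*$ maps $C^*_\Gamma(X;(Z',L^2Z'\otimes\ell^2\Gamma^\infty))$ \emph{into} $C^*_\Gamma(X;(Z,L^2Z\otimes\ell^2\Gamma^\infty))$: $\Gamma$-equivariance is immediate; the propagation of $V_fT'V_f^*$ stays finite because the coarseness function of $f$ lets one compose the $f$-control of $V_f$ with the finite propagation of $T'$; and for $\varphi\in C_0(Z)$ the element $V_fT'V_f^*\pi(\varphi)$ is, modulo $C(X,\maK)$, equal to $V_f\big(T'\pi'(\varphi\circ f)\big)V_f^*$, which lies in $C(X,\maK)$ since $T'\pi'(\chi)\in C(X,\maK)$ for a compactly supported $\chi\in C_0(Z')$ with $\chi\equiv 1$ on $\Supp(\varphi\circ f)$ while $\pi'(\chi)\pi'(\varphi\circ f)=\pi'(\varphi\circ f)$. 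This produces the homomorphism $f_*:=\mathrm{Ad}(V_f)_*$ on $K$-theory, and likewise $g_*$ in the reverse direction.

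The next, and crucial, point is well-definedness: $f_*$ should depend only on the coarse class of $f$. Given two covering isometries $V_1,V_2$ of coarsely equivalent coarse $\Gamma$-maps, ampleness of the representations allows one to extend the partial isometry $V_2V_1^*$ to a $\Gamma$-equivariant finite-propagation unitary $u$ on $C(X)\otimes(L^2Z\otimes\ell^2\Gamma^\infty)$ which still intertwines $\pi$ modulo $C(X,\maK)$ --- that is, $u\in D^*_\Gamma(X;(Z,L^2Z\otimes\ell^2\Gamma^\infty))$ --- and which satisfies $uV_1=V_2$. Since $D^*_\Gamma$ acts by multipliers on its ideal $C^*_\Gamma$, and inner automorphisms by multiplier unitaries are trivial on $K$-theory (Whitehead's lemma), one gets $\mathrm{Ad}(V_2)_*=\mathrm{Ad}(u)_*\circ\mathrm{Ad}(V_1)_*=\mathrm{Ad}(V_1)_*$; hence $f_*$ is well-defined. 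Functoriality then follows formally: $V_gV_f$ is a covering isometry for $g\circ f$, so $g_*\circ f_*=\mathrm{Ad}(V_gV_f)_*=(g\circ f)_*$, and symmetrically $f_*\circ g_*=(f\circ g)_*$, while the identity map is covered by the identity operator, so $(\mathrm{id}_Z)_*=\mathrm{id}$. Because $g\circ f\sim\mathrm{id}_{Z'}$ and $f\circ g\sim\mathrm{id}_Z$ coarsely, this gives $g_*\circ f_*=\mathrm{id}$ and $f_*\circ g_*=\mathrm{id}$, so $f_*$ is the claimed isomorphism.

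I expect the main obstacle to be the covering-isometry package itself: establishing the existence of $V_f$ and of the connecting unitary $u$ in the $C(X)$-linear, $\Gamma$-equivariant setting with all propagation and modulo-$C(X,\maK)$ estimates uniform over the compact parameter space $X$ (a Roe--Voiculescu/Kasparov-stabilization type statement, carried out in the appendix), together with the bookkeeping needed to see that conjugation by $V_f$ respects the \emph{ideal} conditions defining $C^*_\Gamma$ and not merely those of $D^*_\Gamma$. Once those are in hand, everything is formal manipulation of the functor $f\mapsto f_*$.
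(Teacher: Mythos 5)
Your overall strategy is the paper's: conjugate by $\Gamma$-equivariant finite-propagation covering isometries, show the induced map on $K$-theory is independent of choices and functorial, and then let the coarse inverse $g$ of $f$ do the rest. However, there is a genuine over-reach in the key input you rely on. You require your covering isometry $V_f$ to satisfy the Voiculescu-type relation $\pi(\varphi)V_f\equiv V_f\pi'(\varphi\circ f)\ (\mathrm{mod}\ C(X,\maK))$, and you say this comes from the appendix. But the maps occurring in a $\Gamma$-coarse equivalence are only \emph{Borel} coarse $\Gamma$-maps (Definition \ref{Gcoarse}), whereas the appendix constructs Roe--Voiculescu covering $\Gamma$-isometries only for \emph{continuous} coarse maps (Definition \ref{VRiso2}, Lemma \ref{VR-isometry}); for a general coarse $\Gamma$-map it only provides plain Roe covering $\Gamma$-isometries, i.e.\ finite propagation with no intertwining condition (Definition \ref{VRiso1}, Lemma \ref{Roeiso}). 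So the existence statement your local-compactness check rests on is not supplied by the source you invoke, and it is not obviously true for merely Borel $f$. Fortunately it is also not needed: local compactness of $\Ad_{V_f}(T')$ follows from finite propagation and metric properness alone. For $\phi\in C_c(Z)$ there is $\phi'\in C_c(Z')$ with $\pi^\infty_Y(\phi)V_f=\pi^\infty_Y(\phi)V_f\pi^\infty_{Y'}(\phi')$, whence $\pi^\infty_Y(\phi)V_fT'V_f^*=\pi^\infty_Y(\phi)V_f(\pi^\infty_{Y'}(\phi')T')V_f^*\in C(X,\maK)$; this is exactly how the paper argues for the ideal $C^*_\Gamma$, reserving the Voiculescu condition (and the continuity hypothesis it forces) for the later $D^*_\Gamma$-functoriality.

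The second soft spot is well-definedness. You assert that ampleness lets you extend the partial isometry $V_2V_1^*$ to a $\Gamma$-equivariant, finite-propagation unitary $u$, lying in $D^*_\Gamma$ (hence a multiplier of $C^*_\Gamma$), with $uV_1=V_2$, and then quote Whitehead. Producing such a $u$ with all of these properties simultaneously is a nontrivial construction which you do not give, and it is again unnecessary. The paper's route avoids it: one checks directly (same propagation-plus-properness argument as above) that the products $W_iW_j^*$ are multipliers of $C^*_\Gamma(X;(Z,L^2Z\otimes\ell^2\Gamma^\infty))$, and then conjugates $\begin{bmatrix} W_1TW_1^* & 0\\ 0 & 0\end{bmatrix}$ into $\begin{bmatrix} 0 & 0\\ 0 & W_2TW_2^*\end{bmatrix}$ by the self-adjoint unitary $\begin{bmatrix} I-W_1W_1^* & W_1W_2^*\\ W_2W_1^* & I-W_2W_2^*\end{bmatrix}$, giving $(\Ad_{W_1})_*=(\Ad_{W_2})_*$ without extending anything to a unitary on the original module. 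With these two repairs, the remaining steps of your proposal (a covering isometry for $f$ is one for any coarsely equivalent map, compatibility with composition, and $(\id)_*=\id$, so $f_*$ and $g_*$ are mutually inverse) coincide with the paper's proof and go through.
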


%
%
%

The proof of Theorem \ref{GammaCoarsly} will occupy the rest of this paragraph.
The notion of Roe covering $\Gamma$-isometry is introduced in Definition \ref{VRiso1}. According to Lemma \ref{Roeiso}, given a coarse $\Gamma$-map $f:Z'\to Z$ between   {{$\Gamma$-proper spaces}}, there always exist Roe covering $\Gamma$-isometries for $f$.

\begin{lemma}\label{functorial2}\cite{HRbook}
Given a Roe covering $\Gamma$-isometry $W: L^2Z'\otimes \ell^2\Gamma^\infty \longrightarrow L^2Z\otimes \ell^2\Gamma^\infty$ for the coarse $\Gamma$-map $f:Z'\to Z$,  the map
$
\Ad_W: C (X, \maL (L^2 Z'\otimes \ell^2\Gamma^\infty)_{*-str} )^\Gamma  \to C (X, \maL (L^2Z\otimes \ell^2\Gamma^\infty)_{*-str})^\Gamma$
 defined by $\Ad_W(T):=WTW^*$ induces a well-defined homomorphism
 $$
 \Ad_W: C^*_\Gamma (X; (Z', L^2Z'\otimes \ell^2\Gamma^\infty))  \longrightarrow C^*_\Gamma (X; (Z, L^2Z\otimes \ell^2\Gamma^\infty)).
 $$
 Moreover, the induced map $\Ad_{W, *}:K_*(C^*_\Gamma (X; (Z', L^2Z'\otimes \ell^2\Gamma^\infty)))\to K_*(C^*_\Gamma (X; (Z, L^2Z\otimes \ell^2\Gamma^\infty)))$
  is independent of the choice of the Roe covering $\Gamma$-isometry $W$ and will thus be denoted $f_*$.
 \end{lemma}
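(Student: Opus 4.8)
The plan is to verify, in order, (i) that $\Ad_W$ maps into the target algebra, (ii) that it is a well-defined $*$-homomorphism on the $C^*$-level, and (iii) that the induced $K$-theory map is independent of $W$. For step (i), recall that a Roe covering $\Gamma$-isometry $W\colon L^2Z'\otimes\ell^2\Gamma^\infty\to L^2Z\otimes\ell^2\Gamma^\infty$ is, by Definition \ref{VRiso1}, a $\Gamma$-equivariant $*$-strongly continuous field of isometries $(W_x)_{x\in X}$ that intertwines the two representations up to compact errors and has finite propagation relative to the coarse map $f$. I would first check that $\Ad_W$ preserves $\Gamma$-equivariance: since $W_{xg}=U_g^{-1}W_xU_g$ and $T_{xg}=U_g^{-1}T_xU_g$, we get $(W_xT_xW_x^*)_{xg}=U_g^{-1}W_xT_xW_x^*U_g$. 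Next, finite propagation: if $T$ has propagation $\le R$ in $Z'$ and $W$ has propagation $\le S$ relative to $f$ (meaning $\pi_x(\psi)W_x\pi'_x(\varphi)=0$ when $d(\Supp\psi,f(\Supp\varphi))>S$), then $W_xT_xW_x^*$ has propagation controlled by the coarseness constant of $f$ applied to $R$, plus $2S$. The locally-compact condition $WTW^*\,\pi(\varphi)\in C(X,\maK(H))$ for $\varphi\in C_0(Z)$ follows because $W^*\pi(\varphi)W$ differs from a multiplier of $C_0(Z')$-type operator by a compact-valued field (this is where metric properness of $f$ enters, to ensure the relevant pull-back is honestly in $C_0$), and $C^*_\Gamma$ absorbs compacts as an ideal containing $C(X,\maK(H))$.

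For step (ii), the map $T\mapsto WTW^*$ is clearly linear and $*$-preserving; it fails multiplicativity only by the defect $W^*W-I$, which is zero since $W$ is an isometry — so $\Ad_W(ST)=WSTW^*=WSW^*WTW^*=\Ad_W(S)\Ad_W(T)$. Hence $\Ad_W$ is a genuine $*$-homomorphism on the dense subalgebra of finite-propagation locally-compact operators, and since it is norm-decreasing ($\|WTW^*\|\le\|T\|$) it extends continuously to the $C^*$-closure $C^*_\Gamma(X;(Z',L^2Z'\otimes\ell^2\Gamma^\infty))$. This portion is essentially formal once step (i) is in hand; it is the standard Roe-algebra argument from \cite{HRbook}, transported to the fibered $\Gamma$-equivariant setting.

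For step (iii) — independence of $W$ — I would use the standard trick: given two Roe covering $\Gamma$-isometries $W_0,W_1$ for the same coarse $\Gamma$-map $f$ (or for coarsely equivalent maps), the ``doubled'' isometry $W_t$ obtained from a rotation $\begin{pmatrix}\cos(\tfrac{\pi t}{2})&-\sin(\tfrac{\pi t}{2})\\ \sin(\tfrac{\pi t}{2})&\cos(\tfrac{\pi t}{2})\end{pmatrix}$ acting on $W_0\oplus W_1$ inside $L^2Z\otimes\ell^2\Gamma^\infty$ (using a fixed isomorphism $H^\infty\oplus H^\infty\cong H^\infty$, which is available because everything is tensored with $\ell^2\N$) gives a norm-continuous path of Roe covering $\Gamma$-isometries connecting $W_0\oplus 0$ to $0\oplus W_1$. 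Each $\Ad_{W_t}$ is a $*$-homomorphism into the same target (stabilized), these homomorphisms form a continuous family, so they induce the same map on $K$-theory; and $\Ad_{W_0\oplus 0}$, $\Ad_{0\oplus W_1}$ induce the same $K$-theory maps as $\Ad_{W_0}$, $\Ad_{W_1}$ respectively via the standard stabilization identification. The same homotopy argument, combined with the fact that a Roe covering $\Gamma$-isometry for $f$ is also one for any $f'$ coarsely equivalent to $f$ (the propagation bound only worsens by the coarse-equivalence constant), shows the map depends only on the coarse equivalence class, justifying the notation $f_*$.

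The main obstacle I anticipate is step (i), specifically the locally-compact clause: one must check that for $T\in C^*_\Gamma(X;(Z',\dots))$ and $\varphi\in C_0(Z)$, the field $(W_xT_xW_x^*\,\pi_x(\varphi))_{x\in X}$ is not merely fiberwise compact but lies in $C(X,\maK(H))$ — i.e. norm-continuous in $x$ with values in the compacts — and uniformly so. This requires combining the $*$-strong continuity of $(W_x)$, the finite-propagation cutoff (which lets one replace $\varphi$ by a compactly supported function and hence reduce to a ``local'' computation on a $\Gamma$-compact piece), and the definition of a Roe covering isometry ensuring $W^*\pi(\varphi)W-\pi'(\varphi\circ f')$ is $C(X,\maK)$-valued for a suitable approximant $f'$ of $f$; the properness of $f$ is what makes $\varphi\circ f'\in C_0(Z')$. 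Once the continuity/uniformity bookkeeping is organized — most cleanly by first establishing everything on a cofinal net of finite-propagation, compactly-cut-off elements and then passing to norm limits — the rest of the lemma follows the classical template of \cite{HRbook} with only cosmetic changes for the $C(X)$-linear, $\Gamma$-equivariant framework.
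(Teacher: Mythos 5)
Your step (i) justification of the locally compact clause rests on a property that a Roe covering $\Gamma$-isometry does not have. Twice you assert that, by Definition \ref{VRiso1}, $W$ ``intertwines the two representations up to compact errors'', i.e.\ that $W^*\pi_Z^\infty(\varphi)W-\pi_{Z'}^\infty(\varphi\circ f)$ is compact (or compact-valued). That is precisely the extra condition \eqref{Voiculescu} defining a \emph{Roe-Voiculescu} covering $\Gamma$-isometry (Definition \ref{VRiso2}); it is what is needed for the $D^*$-algebra functoriality (Lemma \ref{functorial4}) and it requires $f$ to be continuous, but it is \emph{not} part of Definition \ref{VRiso1} and is not available in the present lemma, where $f$ is only a Borel coarse $\Gamma$-map. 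Fortunately it is also not needed: the correct argument (and the paper's) is a pure support argument. Since $W$ has finite propagation with respect to $f$ and $f$ is metrically proper, for every $\phi\in C_c(Z)$ there is $\phi'\in C_c(Z')$ with $\pi_Z^\infty(\phi)W=\pi_Z^\infty(\phi)W\pi_{Z'}^\infty(\phi')$, and then $\pi_Z^\infty(\phi)\,WTW^*=\pi_Z^\infty(\phi)W\bigl(\pi_{Z'}^\infty(\phi')T\bigr)W^*$ lies in $C(X,\maK)$ simply because $\pi_{Z'}^\infty(\phi')T$ does and $W$ is bounded; no approximant $f'$ with $\varphi\circ f'\in C_0(Z')$ enters. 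Note also that $W$ is a single $\Gamma$-equivariant Hilbert-space isometry, identified with the $X$-constant field of operators, so the continuity/uniformity bookkeeping you flag as the main obstacle is automatic; your step (i) should be rewritten along these lines, after which steps (i)--(ii) coincide with the paper's argument (equivariance, $\Prop(WTW^*)\le 2\Prop(W)+\Prop(T)$ up to the coarseness constant of $f$, multiplicativity from $W^*W=I$, extension by continuity).

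Your step (iii) is correct but takes a genuinely different route from the paper. Properly implemented, your rotation trick is: set $V_t:=\bigl(\cos(\tfrac{\pi t}{2})\,W_0,\ \sin(\tfrac{\pi t}{2})\,W_1\bigr)$ viewed as an isometry from $L^2Z'\otimes\ell^2\Gamma^\infty$ into the two-fold direct sum of $L^2Z\otimes\ell^2\Gamma^\infty$ (isometry because the two summands are orthogonal), each $V_t$ being $\Gamma$-equivariant with finite propagation; then $t\mapsto \Ad_{V_t}$ is a pointwise norm-continuous family of homomorphisms into $M_2(C^*_\Gamma(X;(Z,L^2Z\otimes\ell^2\Gamma^\infty)))$ — here you must still check, by the same cutoff argument, that the off-diagonal entries $W_0TW_1^*$ and $W_1TW_0^*$ lie in the ideal — and the two corner embeddings induce the same $K$-theory identification, giving $(\Ad_{W_0})_*=(\Ad_{W_1})_*$. (As literally written, rotating $W_0\oplus W_1$ connects $W_0\oplus 0$ to $0\oplus W_0$, so use the path $V_t$ above; and if you insist on absorbing $M_2$ via $H^\infty\oplus H^\infty\cong H^\infty$, implement it by a $\Gamma$-equivariant unitary acting only on the $\ell^2\N$ factor so propagation is preserved.) The paper instead conjugates $\operatorname{diag}(W_1TW_1^*,0)$ into $\operatorname{diag}(0,W_2TW_2^*)$ by the explicit self-adjoint unitary with entries $I-W_iW_i^*$ and $W_iW_j^*$, which requires the extra verification that the $W_iW_j^*$ are multipliers of the Roe ideal; your homotopy avoids that multiplier argument at the cost of the continuity bookkeeping, and both are standard. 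Your final remark that a covering isometry for $f$ serves for any coarsely equivalent map reproduces Lemma \ref{functorial3}.
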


\begin{proof}\ The proof for $X=\{\bullet\}$ given in \cite{HRbook} extends immediately to our situation, and we recall the steps of this proof for the sake of completeness.
 The covering $\Gamma$-isometry is identified with {the isometry between the Hilbert $C(X)$-modules which is constant in the $X$-variable}. Notice first that if
 $T\in C^*_\Gamma (X; (Z', L^2Z'\otimes \ell^2\Gamma^\infty))$ has finite propagation, then so does $\Ad_W (T)$, precisely because $\Prop(W)$ is finite and $\Prop(WTW^*)\leq 2\Prop(W)+\Prop(T)$. Moreover, $\Ad_W(T)$ is
 locally compact as soon as $T$ is. Indeed,   if $\phi \in C_c(Y)$, then there exists $\phi'\in C_c(Y')$ such that $\pi^\infty_{Y}(\phi) W= \pi^\infty_Y(\phi)W\pi^\infty_{Y'}(\phi')$, so that since $T$ is locally compact, we have
 $$
 \pi^\infty_Y(\phi)WTW^*= \pi^\infty_Y(\phi)W(\pi^\infty_{Y'}(\phi')T)W^* \in C(X, \maK (L^2Z\otimes \ell^2\Gamma^\infty)).
 $$
Let us show now that two Roe covering $\Gamma$-isometries $W_1$ and $W_2$ induce the same map on $K$-theory.
The operators  $W_iW^*_j$ are clearly multipliers of the $C^*$-algebra  $C^*_\Gamma (X; (Z, L^2Z\otimes \ell^2\Gamma^\infty))$, for $i,j =1,2$. Let us show for instance  that $W_1W^*_2$ is such a multiplier. From the finite propagation of $W_1$ and $W_2$ we deduce that if $T\in C^*_\Gamma (X; (Z, L^2Z\otimes \ell^2\Gamma^\infty))$ has finite propagation,  then $W_1W_2^*T$ has finite propagation. To show that it is locally compact, we use again the fact that given $\phi\in C_c(Z)$, there exist $\phi'\in C_c(Z')$ and $\phi''\in C_c(Z)$ such that
$$
\pi^\infty_{Y}(\phi) W_1= \pi^\infty_Y(\phi)W_1\pi^\infty_{Y'}(\phi') \text{ and } \pi^\infty_{Y'}(\phi') W_2^*= \pi^\infty_{Y'} (\phi')W_2^* \pi^\infty_{Y}(\phi'').
$$
Hence for any $T\in C^*_\Gamma (X; (Z, L^2Z\otimes \ell^2\Gamma^\infty))$, we have
$$
\pi^\infty_Y(\phi)W_1W_2^*T= \pi^\infty_Y(\phi)W_1\pi^\infty_{Y'}(\phi')W_2^*T= \pi^\infty_Y(\phi)W_1\pi^\infty_{Y'}(\phi')W_2^*\pi^\infty_Y(\phi'')T.
$$
The latter operator is hence  compact since $\pi^\infty_Y(\phi'')T$ is compact.

Now it is easy to check that for $T\in C^*_\Gamma (X; (Z', L^2Z'\otimes \ell^2\Gamma^\infty))$,  the following relation holds
$$
\begin{bmatrix} W_1TW_1^* & 0\\ 0 & 0 \end{bmatrix} \sim  \begin{bmatrix} 0 & 0\\ 0 & W_2TW_2^* \end{bmatrix} \quad \text{ in } M_2(C^*_\Gamma (X; (Z, L^2Z\otimes \ell^2\Gamma^\infty))
$$
through conjugation by the unitary matrix $U= \begin{bmatrix} I- W_1W_1^* & W_1W_2^*\\ W_2W_1^* & I- W_2W_2^* \end{bmatrix}$, so that $(\Ad_{W_1})_* = (\Ad_{W_2})_*$ on $K$-theory.
\end{proof}

We thus end up, for any coarse $\Gamma$-map $f:Z'\to Z$, with the well-defined group morphism
$$
f_*: K_*(C^*_\Gamma (X; (Z', L^2Z'\otimes \ell^2\Gamma^\infty)))\longrightarrow K_*(C^*_\Gamma (X; (Z, L^2Z\otimes \ell^2\Gamma^\infty)))
$$
This construction is then clearly  functorial in the sense that if $f':Z''\to Z'$ is another coarse $\Gamma$-map, then
$$
(f\circ f')_* = f_*\circ f'_*.
$$

\begin{lemma}\label{functorial3}
  If $f,g: Z'\rightarrow Z$ are coarsely equivalent coarse $\Gamma$-maps, then
  $$
  f_*= g_* : K_*(C^*_\Gamma (X; (Z', L^2Z'\otimes \ell^2\Gamma^\infty)))\longrightarrow K_*(C^*_\Gamma (X; (Z, L^2Z\otimes \ell^2\Gamma^\infty))).
  $$
 \end{lemma}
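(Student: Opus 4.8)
The plan is to reduce the statement to the already-established independence of $f_*$ from the choice of Roe covering $\Gamma$-isometry, as recorded in Lemma \ref{functorial2}. The key observation is that a Roe covering $\Gamma$-isometry is a coarse-geometric object: its defining property only involves the propagation of $W$ and the support conditions relating $\pi^\infty_{Y'}(\phi')$ and $\pi^\infty_Y(\phi)W$, all of which are insensitive to replacing $f$ by a map at bounded distance from it. So the first step is to check carefully that if $W$ is a Roe covering $\Gamma$-isometry for $f$ and $g$ is coarsely equivalent to $f$ (say $d_Z(f(z'),g(z'))\le M$ for all $z'$), then the \emph{same} $W$ is also a Roe covering $\Gamma$-isometry for $g$. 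This amounts to unwinding Definition \ref{VRiso1}: the bound on $\Prop(W)$ relative to the coarse map only gets worsened by the additive constant $M$, and the support-covering conditions $\pi^\infty_Y(\phi)W=\pi^\infty_Y(\phi)W\pi^\infty_{Y'}(\phi')$ are obtained by enlarging the supports $\Supp(\phi')$ by $M$.

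Once that is in place, the conclusion is immediate: by Lemma \ref{functorial2}, $f_*$ is computed by $\Ad_{W,*}$ for \emph{any} Roe covering $\Gamma$-isometry $W$ for $f$, and likewise $g_*=\Ad_{W,*}$ for any such isometry for $g$. Since a single $W$ serves for both $f$ and $g$, we get $f_*=\Ad_{W,*}=g_*$. Thus the whole proof fits in a short paragraph once the ``same $W$ works'' claim is verified. Concretely I would write:

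\begin{proof}
Let $M\ge 0$ be a constant with $d_Z(f(z'),g(z'))\le M$ for all $z'\in Z'$. By Lemma \ref{Roeiso} choose a Roe covering $\Gamma$-isometry $W:L^2Z'\otimes\ell^2\Gamma^\infty\to L^2Z\otimes\ell^2\Gamma^\infty$ for $f$. We claim that $W$ is also a Roe covering $\Gamma$-isometry for $g$. Indeed, the $\Gamma$-equivariance of $W$ is a condition independent of $f$ and $g$, and the support/propagation conditions in Definition \ref{VRiso1} only change by the additive constant $M$ when $f$ is replaced by $g$: if $\varphi,\psi\in C_0(Z)$ satisfy $d(\Supp(\varphi),\Supp(\psi))>\Prop(W)+M$ then, writing the support of $\psi\circ g$ inside the $M$-neighbourhood of the support of $\psi\circ f$, the corresponding annihilation identity still holds; similarly the covering condition for $g$ follows from that for $f$ after enlarging the auxiliary cutoffs by $M$. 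Hence $W$ satisfies the defining properties relative to $g$ as well. By Lemma \ref{functorial2}, $f_*=\Ad_{W,*}$ and $g_*=\Ad_{W,*}$, so $f_*=g_*$.
\end{proof}

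The main obstacle, such as it is, lies entirely in the bookkeeping of the first step: one must make sure that the precise form of Definition \ref{VRiso1} (which is stated in the appendix of this paper, not reproduced in this excerpt) really is stable under coarse equivalence of the underlying map, including the $\Gamma$-equivariance clause and the interaction of the support conditions with the metric neighbourhoods on $Z$. There is no analytic difficulty and no $K$-theoretic manipulation needed beyond invoking Lemma \ref{functorial2}; the content is that ``Roe covering $\Gamma$-isometry for $f$'' depends on $f$ only through its coarse equivalence class. If one preferred a less economical route one could instead produce, from $W$ for $f$, a genuinely new Roe covering $\Gamma$-isometry for $g$ and compare the two via the unitary-matrix argument already used in the proof of Lemma \ref{functorial2}, but the direct argument above is cleaner.
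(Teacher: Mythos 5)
Your proposal is correct and is essentially the paper's own argument: the paper likewise observes that, since $d_Z(f(z'),g(z'))\le M$ uniformly, any Roe covering $\Gamma$-isometry for $f$ has finite propagation with respect to $g$ (the propagation worsening only by the additive constant $M$), hence is a Roe covering $\Gamma$-isometry for $g$ as well, and then concludes via the independence of $f_*$, $g_*$ from the choice of covering isometry established in Lemma \ref{functorial2}. No meaningful difference in route.
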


 \begin{proof}
 A Roe covering $\Gamma$-isometry for $f$ is also a Roe covering $\Gamma$-isometry for $g$ and vice versa. Indeed,
 $d_Z( f(Z'), g(Z')) \leq M$ for some constant $M$, therefore, if $W$ is a $\Gamma$-equivariant isometry which has finite propagation with respect to $f$ then it automatically has finite propagation with respect to $g$. Since the maps $f_*$ and $g_*$ don't depend on the choice of the Roe covering $\Gamma$-isometry, the proof is complete.
 \end{proof}

{ Theorem \ref{GammaCoarsly} is now a corollary of Theorem \ref{Roeiso}, Lemma \ref{functorial2} and Lemma \ref{functorial3}}.
%

{{Recall that a metrically proper coarse map  $f: (Z', d_{Z'}) \to (Z, d_Z)$ is called a coarse embedding.  The map $f$ is coarsely onto if there exists a constant $C\geq 0$ such that
$$
\forall z\in Z, \exists z'\in Z'\text{ with } d_Z (f(z'), z) \leq C.
$$
A $\Gamma$-coarse embedding will be for us a coarse embedding which is  coarsely equivariant. }}

{{\begin{lemma}
Let $f: (Z', d_{Z'}) \to (Z, d_Z)$ be a $\Gamma$-coarse embedding which is coarsely onto. Then $f$ is $\Gamma$-coarse equivalence.
\end{lemma}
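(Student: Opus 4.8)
The plan is to construct an explicit coarse $\Gamma$-map $g: Z \to Z'$ which is a $\Gamma$-coarse inverse to $f$, by choosing for each point of $Z$ a preimage (up to the uniform constant $C$) under $f$. Since we only require Borel maps, there is no continuity obstruction, and the main work is to verify that such a choice can be made Borel, $\Gamma$-equivariant up to a uniform constant, and that it is coarse.

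\begin{proof}
Let $f$ be a $\Gamma$-coarse embedding which is coarsely onto, with associated constants: $f$ metrically proper, for each $R>0$ there is $S>0$ with $d_{Z'}(z_1',z_2')\le R \Rightarrow d_Z(f(z_1'),f(z_2'))\le S$, coarse equivariance $d_Z(f(\gamma z'),\gamma f(z'))\le M$ for all $\gamma\in\Gamma$, $z'\in Z'$, and coarsely onto with constant $C$. The first step is to build $g:Z\to Z'$. Fix a fundamental-domain type Borel decomposition of $Z$ for the proper $\Gamma$-action (using that $Z$ is a proper $\Gamma$-space and second countable, one has a Borel section of $Z\to Z/\Gamma$ on each piece of a countable Borel partition); equivalently, pick a Borel transversal and extend equivariantly. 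On the transversal, for each $z$ choose $g(z)=z'\in Z'$ with $d_Z(f(z'),z)\le C$; such a choice can be made Borel by a standard Borel selection argument (the set $\{(z,z')\,:\,d_Z(f(z'),z)\le C\}$ has nonempty sections, and since $Z'$ is Polish one applies the Jankov--von Neumann uniformization theorem, then corrects to a Borel map on a conull/comeager set, or directly uses that closed-valued Borel multifunctions into Polish spaces admit Borel selections). Then extend $g$ to all of $Z$ by $g(\gamma z):=\gamma g(z)$ for $z$ in the transversal; this is Borel and strictly $\Gamma$-equivariant, hence in particular coarsely equivariant with constant $0$.

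The second step is to check $g$ is a coarse map. For metric properness: if $d_Z(z_1,z_2)\le R$ then $d_Z(f(g(z_1)),f(g(z_2)))\le R+2C$, and since $f$ is a coarse embedding (metrically proper coarse map), its ``coarse inverse'' estimate — which holds for any coarse embedding — gives a bound $d_{Z'}(g(z_1),g(z_2))\le \rho(R+2C)$ for some nondecreasing function $\rho$; concretely, if $d_{Z'}(w_1,w_2)$ were large then by the contrapositive of metric properness applied to $f$ the distance $d_Z(f(w_1),f(w_2))$ is large, a contradiction. This simultaneously shows $g$ is controlled (coarse) and metrically proper. Thus $g$ is a coarse $\Gamma$-map.

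The third step is to verify the two composition identities. For $f\circ g$: $d_Z(f(g(z)),z)\le C$ for all $z$ in the transversal, and for general $z=\gamma z_0$ with $z_0$ in the transversal, $d_Z(f(g(\gamma z_0)),\gamma z_0)=d_Z(f(\gamma g(z_0)),\gamma z_0)\le d_Z(f(\gamma g(z_0)),\gamma f(g(z_0)))+d_Z(\gamma f(g(z_0)),\gamma z_0)\le M+C$, using coarse equivariance of $f$ and that $\Gamma$ acts by isometries. So $f\circ g$ is coarsely equivalent to $\id_Z$. For $g\circ f$: given $z'\in Z'$, we know $d_Z(f(g(f(z'))),f(z'))\le M+C$ by the previous paragraph applied to $z=f(z')$; since $f$ is a coarse embedding, the same coarse-inverse estimate $\rho$ gives $d_{Z'}(g(f(z')),z')\le \rho(M+C)$, which is a uniform bound. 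Hence $g\circ f$ is coarsely equivalent to $\id_{Z'}$, and $f$ is a $\Gamma$-coarse equivalence.
\end{proof}

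The step I expect to be the main obstacle is the Borel selection: one must be careful that the Jankov--von Neumann uniformization produces only a universally measurable (or Baire-measurable) section in general, so some care is needed to either replace it by a genuine Borel map after discarding a null set — which is harmless for coarse-geometric purposes since the coarse structure is insensitive to such modifications, provided we re-extend equivariantly — or to exploit properness of the $\Gamma$-action and local compactness of $Z$ to get an honest Borel section directly. The equivariance is then cheap because we only extend from a transversal; and all the metric estimates are routine triangle-inequality manipulations once the ``coarse inverse'' estimate for a coarse embedding is recalled.
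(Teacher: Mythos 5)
Your overall plan (build an explicit Borel coarse inverse $g$ with $d_Z(f(g(z)),z)$ uniformly bounded and check the axioms) is the same as the paper's, but the way you produce $g$ has a genuine flaw: the strictly equivariant extension from a Borel transversal is not well defined. If a transversal point $z$ has nontrivial stabilizer --- which proper actions of discrete groups allow (finite stabilizers, and torsion is precisely the phenomenon this paper's use of $\underline{E}\Gamma$ is designed to accommodate) --- then $\gamma z=\gamma\sigma z$ forces $\gamma g(z)=\gamma\sigma g(z)$, i.e.\ $\sigma g(z)=g(z)$, and there is no reason a point of $Z'$ fixed by $\sigma$ exists: take $\Gamma$ the infinite dihedral group, $Z=\R$ with the reflection action, $Z'=\Gamma$ with a word metric and $f$ the orbit map; stabilizers of reflection points act freely on $Z'$. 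So ``strictly $\Gamma$-equivariant, hence coarsely equivariant with constant $0$'' is not available. It is also unnecessary: the definition of a coarse $\Gamma$-map only asks for coarse equivariance, and once $g$ is chosen pointwise on all of $Z$ with $d_Z(f(g(z)),z)\le C$, coarse equivariance of $g$ follows from a three-term triangle inequality (using that $\Gamma$ acts by isometries and $f$ is coarsely equivariant) together with properness of $f$ --- this is exactly the paper's argument, which takes the Borel coarse inverse $h$ itself from Guentner \cite{G14} and only has to prove the equivariance estimate.

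The Borel-selection step, which you yourself flag as the main obstacle, is also not closed by the tools you cite: Jankov--von Neumann yields only a universally measurable section, and ``discarding a null set'' does not produce an everywhere-defined map (no measure is part of the data here, and a coarse inverse must be defined on all of $Z$); Kuratowski--Ryll-Nardzewski does not apply directly since $f$ is merely Borel, so the sets $\{z': d_Z(f(z'),z)\le C\}$ need not be closed. A cheap repair avoiding selection theorems: fix a countable dense set $\{w_n\}\subset Z'$; since $f$ is coarse there is $S_1$ with $d_{Z'}(w,z')\le 1\Rightarrow d_Z(f(w),f(z'))\le S_1$, so every $z$ admits some $w_n$ with $d_Z(f(w_n),z)\le C+S_1$, and setting $g(z):=w_{n(z)}$ with $n(z)$ the least such index is Borel because each function $z\mapsto d_Z(f(w_n),z)$ is continuous. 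With that $g$ (and coarse rather than strict equivariance) your estimates in steps two and three do go through with constant $C+S_1$. One last shared caveat: the uniform ``coarse inverse estimate'' you extract from the ``contrapositive of metric properness'' really requires the effective-properness form of a coarse embedding (bounded-image pairs have uniformly bounded distance), which does not follow from properness of $f$ alone; the paper leans on the same point via the definition used in \cite{G14}.
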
}}

\begin{proof}
{{According to \cite{G14}, we know that there exists a Borel map  $h: (Z, d_{Z}) \to (Z', d_{Z'})$ which is a coarse embedding and which is a coarse inverse to $f$. Let $C$ be the onto-constant of $f$, i.e.  such that
$$
\forall z\in Z, \exists z'\in Z'\text{ with } d_Z (f(z'), z) \leq C.
$$
Then the map $h$ is defined in  \cite{G14} in such a way that it satisfies the relation
$$
d_Z (f(h(z)), z) \leq C, \quad \forall z\in Z.
$$
Now we have for any $z\in Z$ and any $g\in \Gamma$:
\begin{eqnarray*}
d_Z\left(f (h(gz)), f(gh(z))\right) & \leq  &  d_Z \left(f(h(gz)), gz\right) + d_Z\left(z, f(h(z))\right) + d_Z \left(g f(h(z)), f(gh(z))\right) \\
& \leq & C + C + M_1
\end{eqnarray*}
where $M_1$ is the coarse-equivariance constant for $f$, i.e. a constant which satisfies
$$
d_Z \left(g f(z'), f(g z') \right) \leq M_1,\quad \forall (z', g)\in Z'\times\Gamma.
$$
Since $f$ is metrically proper, we conclude from the previous estimate that there exists a constant $C'\geq 0$ such that
$$
d_{Z'} \left( h(gz), gh(z)\right) \leq C',
$$
and hence $h$ is coarsely equivariant as allowed.}}
\end{proof}

We  can finally deduce the following important corollary

\begin{corollary}\label{inclusioninv}
{{If the $\Gamma$-space $Z$ is  proper and $i: Z'\hookrightarrow Z$ is a coarse inclusion of a  closed  $\Gamma$-subspace $Z'$, then we have a group isomorphism:
$$
i_{Z'\subset Z}^C : K_*(C^*_\Gamma (X; (Z', L^2Z'\otimes \ell^2\Gamma^\infty)))\stackrel{\cong}{\longrightarrow} K_*(C^*_\Gamma (X; (Z, L^2Z\otimes \ell^2\Gamma^\infty)))
$$
In particular, when $Z/\Gamma$ is compact, any closed $\Gamma$-subspace $Z'$ induces  the above isomorphism $i_{Z'\subset Z}^C$.}}
\end{corollary}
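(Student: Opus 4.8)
The plan is to reduce this corollary to the $\Gamma$-coarse invariance of $K_*(C^*_\Gamma)$ already obtained in Theorem \ref{GammaCoarsly}, together with the lemma immediately preceding this corollary, which says that a $\Gamma$-coarse embedding that is coarsely onto is automatically a $\Gamma$-coarse equivalence. The first step is to record the elementary structural facts about the inclusion. Given a closed $\Gamma$-invariant $Z'\subseteq Z$ with the restricted metric $d' := d|_{Z'\times Z'}$, the space $(Z',d')$ is again a proper metric space on which $\Gamma$ acts properly by isometries (closed balls of $Z'$ are closed subsets of the corresponding compact balls of $Z$, and properness and isometry of the action are inherited by restriction to the invariant subset), and it admits a $\Gamma$-invariant fully supported Borel measure, so that $C^*_\Gamma(X;(Z', L^2Z'\otimes\ell^2\Gamma^\infty))$ and the map $i^C_{Z'\subset Z}$ make sense. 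The inclusion $i:Z'\hookrightarrow Z$ is metrically proper, $\Gamma$-equivariant and isometric, hence a coarse $\Gamma$-map with all the relevant constants equal to $0$ — in particular a $\Gamma$-coarse embedding — and by Lemma \ref{functorial2} it induces the homomorphism $i^C_{Z'\subset Z} = i_*$ on $K$-theory.

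For the general statement I would read "coarse inclusion" as the requirement that $i$ be coarsely onto, i.e. that $Z'$ be coarsely dense ($Z = N_R(Z')$ for some $R\geq 0$); this is the natural extra hypothesis, since an inclusion equipped with the subspace metric is always metrically proper and coarse by itself. Under it, the preceding lemma upgrades $i$ to a $\Gamma$-coarse equivalence, and Theorem \ref{GammaCoarsly} then gives at once that $i^C_{Z'\subset Z}$ is an isomorphism. If one wishes to avoid invoking Theorem \ref{GammaCoarsly} as a black box, one simply repeats its proof in this case: pick a coarse $\Gamma$-inverse $h$ to $i$ (produced by the preceding lemma) and combine the functoriality relation $(f\circ f')_* = f_*\circ f'_*$ with Lemma \ref{functorial3} to get $h_*\circ i_* = (h\circ i)_* = \mathrm{id}$ and $i_*\circ h_* = (i\circ h)_* = \mathrm{id}$.

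For the "in particular" assertion the task is to show that cocompactness forces any nonempty closed $\Gamma$-subspace to be coarsely dense, thereby reducing to the previous paragraph (when $Z'=\emptyset$ both Roe ideals are $0$ and there is nothing to prove). Since $\Gamma$ acts properly and cocompactly on the locally compact $Z$, there is a compact $K\subseteq Z$ with $\Gamma\cdot K = Z$: cover the compact quotient $Z/\Gamma$ by finitely many images of relatively compact open sets and take the union of their closures. Fixing $z_0'\in Z'$ and $R := \sup_{w\in K} d(w, z_0') < \infty$, for arbitrary $z\in Z$ write $z = \gamma w$ with $\gamma\in\Gamma$, $w\in K$; then $\gamma z_0'\in Z'$ by invariance, and $d(z,\gamma z_0') = d(\gamma w,\gamma z_0') = d(w,z_0')\leq R$ because $\Gamma$ acts isometrically. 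Hence $Z'$ is $R$-dense in $Z$, so $i$ is coarsely onto and the first part applies.

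The serious content lives entirely inside Theorem \ref{GammaCoarsly} and the preceding lemma (which themselves rest on the Roe covering $\Gamma$-isometry construction of the appendix and on \cite{G14}); everything else here is bookkeeping. The two places where I expect some care — the main, if modest, obstacles — are: ensuring that $Z'$ genuinely inherits all the structure needed to even define $C^*_\Gamma(X;(Z', L^2Z'\otimes\ell^2\Gamma^\infty))$, above all a $\Gamma$-invariant fully supported Borel measure, and checking that the chosen measures are compatible with the comparison map $i^C_{Z'\subset Z}$; and the cocompactness $\Rightarrow$ coarse-density step, which is short but is the sole point at which the hypothesis on $Z/\Gamma$ is used and where one must extract the compact set $K$ with $\Gamma K = Z$ from properness together with compactness of the quotient.
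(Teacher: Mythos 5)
Your proof is correct and takes essentially the same route as the paper: interpret the coarse-inclusion hypothesis as coarse onto-ness, use the preceding lemma to upgrade the (isometric, equivariant, metrically proper) inclusion to a $\Gamma$-coarse equivalence, and conclude with Theorem \ref{GammaCoarsly}, with your compact-set argument for cocompactness $\Rightarrow$ coarse density being exactly the intended justification of the ``in particular'' clause. Only your parenthetical on $Z'=\emptyset$ is off (the Roe ideal attached to $Z$ is certainly not zero, so the map would not be an isomorphism there), but that degenerate case is excluded anyway by the standing requirement of a fully supported invariant measure, i.e.\ an ample fiberwise representation, on $Z'$.
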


\begin{proof}\
{{Indeed, in this case, the inclusion $i$ is automatically a $\Gamma$-coarse embedding which is coarsely onto. Hence the inclusion map $i$ a $\Gamma$-coarse equivalence according to the previous lemma.
The proof is then completed by applying Theorem \ref{GammaCoarsly} above.}}
\end{proof}

\subsection{Functoriality of $D^*$-algebras}\
 In order to prove a similar functoriality result for the $D^*$-algebras, we use the results of \cite{HRbook}[Chapter 12] and we shall need a  generalization of Voiculescu's theorem.  Recall the notion of {Roe-Voiculescu} covering $\Gamma$-isometry, see Definition \ref{VRiso2}.

\begin{lemma} \label{functorial4} Let $W$ be a {Roe-Voiculescu} covering $\Gamma$-isometry for the continuous {{$\Gamma$-coarse}} map $f: Z'\rightarrow Z$.
 The map $ \Ad_W: C (X; {\maL}(\ell^2\Gamma^\infty \otimes L^2Z')_{*-str})^{{\Gamma}} \longrightarrow C (X; {\maL}(\ell^2\Gamma^\infty \otimes L^2Z)_{*-str})^{{\Gamma}}$ given by $\Ad_W(T):=(\id_{C(X)} \otimes W)T(\id_{C(X)}\otimes W^*)$,
yields a well-defined $C^*$-algebra homomorphism
$$
\Ad_W: D^*_\Gamma (X; (Z', \ell^2\Gamma^\infty\otimes L^2Z')) \longrightarrow D^*_\Gamma (X; (Z, \ell^2\Gamma^\infty\otimes L^2Z)).
$$
Moreover, if $W_1$ and $W_2$ are two {Roe-Voiculescu} covering $\Gamma$-isometries for $f$, then they induce the same map on $K$-theory.
\end{lemma}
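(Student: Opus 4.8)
The plan is to imitate the proof of Lemma~\ref{functorial2} almost verbatim, the only structural change being that the local compactness requirement ``$T\pi(f)\in C(X,\maK)$'' characterizing the ideal $C^*_\Gamma$ is replaced everywhere by the commutator requirement ``$[T,\pi(f)]\in C(X,\maK)$'' characterizing $D^*_\Gamma$. The single genuinely new input, compared with the treatment of the Roe ideal, is the approximate intertwining property built into the definition of a Roe--Voiculescu covering $\Gamma$-isometry (Definition~\ref{VRiso2}): for every $\phi\in C_0(Z)$ one has, writing as in Lemma~\ref{functorial2} $W$ for $\id_{C(X)}\otimes W$,
$$
\pi^\infty_Z(\phi)\,W \;-\; W\,\pi^\infty_{Z'}(\phi\circ f)\;\in\; C\bigl(X,\maK(\ell^2\Gamma^\infty\otimes L^2Z)\bigr).
$$
Here I would first note that, $f$ being continuous, metrically proper, and the metric spaces being proper, the pullback $f^*\colon\phi\mapsto\phi\circ f$ sends $C_c(Z)$ into $C_c(Z')$ and hence $C_0(Z)$ into $C_0(Z')$, so that the displayed relation even makes sense; and that the commutator test defining $D^*_\Gamma$ reduces, since $X$ is compact and every operator in play is $C(X)$-linear, to test functions pulled back from $Z$.

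Next I would check that $\Ad_W$ maps the dense $*$-subalgebra defining $D^*_\Gamma(X;(Z',\ell^2\Gamma^\infty\otimes L^2Z'))$ into the one defining $D^*_\Gamma(X;(Z,\ell^2\Gamma^\infty\otimes L^2Z))$. As in Lemma~\ref{functorial2}, $\Gamma$-equivariance of $W$ forces $WTW^*$ to be $\Gamma$-equivariant, and $\Prop(WTW^*)\le 2\Prop(W)+\Prop(T)<\infty$, so finite propagation is preserved. For the commutator condition, taking $\phi\in C_0(Z)$, using the intertwining relation and its adjoint $W^*\pi^\infty_Z(\phi)\equiv\pi^\infty_{Z'}(\phi\circ f)\,W^*$ modulo $C(X,\maK)$, one computes
$$
\bigl[\Ad_W(T),\pi^\infty_Z(\phi)\bigr]\;\equiv\;W\,\bigl[T,\pi^\infty_{Z'}(\phi\circ f)\bigr]\,W^*\pmod{C(X,\maK(\ell^2\Gamma^\infty\otimes L^2Z))},
$$
and since $T\in D^*_\Gamma$ the inner commutator lies in $C(X,\maK(\ell^2\Gamma^\infty\otimes L^2Z'))$, which the isometry $W$ carries into $C(X,\maK(\ell^2\Gamma^\infty\otimes L^2Z))$. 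As $W^*W=\id$, $\Ad_W$ is a contractive $*$-homomorphism on these dense subalgebras, so it extends to the $C^*$-completions, giving the asserted homomorphism.

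For the independence statement I would use that, unlike $C^*_\Gamma$, the algebra $D^*_\Gamma$ is \emph{unital} (the identity operator has propagation $0$, is $\Gamma$-equivariant, and has vanishing commutators), so no multiplier algebras are needed. Given two Roe--Voiculescu covering $\Gamma$-isometries $W_1,W_2$ for $f$, the same computation as above, combining the intertwining relation for $W_i$ with the adjoint relation for $W_j^*$, shows $W_iW_j^*\in D^*_\Gamma(X;(Z,\ell^2\Gamma^\infty\otimes L^2Z))$ for all $i,j$: finite propagation is clear, and $[W_iW_j^*,\pi^\infty_Z(\phi)]\equiv W_i\bigl[\id,\pi^\infty_{Z'}(\phi\circ f)\bigr]W_j^*=0$ modulo $C(X,\maK)$. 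Then
$$
U=\begin{bmatrix} \id-W_1W_1^* & W_1W_2^*\\ W_2W_1^* & \id-W_2W_2^*\end{bmatrix}
$$
is a unitary in $M_2\bigl(D^*_\Gamma(X;(Z,\ell^2\Gamma^\infty\otimes L^2Z))\bigr)$ (the usual bookkeeping with $W_i^*W_i=\id$), and conjugation by $U$ gives, for every $T\in D^*_\Gamma(X;(Z',\ell^2\Gamma^\infty\otimes L^2Z'))$,
$$
\begin{bmatrix} \Ad_{W_1}(T) & 0\\ 0 & 0\end{bmatrix}\;\sim\;\begin{bmatrix} 0 & 0\\ 0 & \Ad_{W_2}(T)\end{bmatrix}\quad\text{in }M_2\bigl(D^*_\Gamma(X;(Z,\ell^2\Gamma^\infty\otimes L^2Z))\bigr),
$$
exactly as in Lemma~\ref{functorial2}; applying this to projections and to unitaries representing $K_0$- and $K_1$-classes yields $(\Ad_{W_1})_*=(\Ad_{W_2})_*$.

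I expect the crux to be the first step, and within it the justification of the approximate intertwining relation $\pi^\infty_Z(\phi)W\equiv W\pi^\infty_{Z'}(\phi\circ f)$ modulo $C(X,\maK)$: this is precisely the point where a plain Roe covering $\Gamma$-isometry does not suffice and the ``Voiculescu'' strengthening of Definition~\ref{VRiso2} is needed, and it ultimately rests on the $\Gamma$-equivariant generalization of Voiculescu's theorem used to produce such isometries in the appendix (which also guarantees their existence). The remaining difficulties are bookkeeping: that $f^*$ preserves compact supports (continuity and metric properness of $f$ together with properness of $(Z',d')$), that $C(X,\maK(\cdot))$ is stable under conjugation by $\id_{C(X)}\otimes W$ inside the $*$-strong picture of $\maL_{C(X)}$, and the reduction of the commutator test to functions pulled back from $Z$.
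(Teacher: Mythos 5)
Your overall architecture is the same as the paper's: conjugation by $\id_{C(X)}\otimes W$, the estimate $\Prop(\Ad_W(T))\le 2\Prop(W)+\Prop(T)$, preservation of $\Gamma$-equivariance, and the $2\times 2$ self-adjoint unitary $U$ built from $W_1W_2^*$ to compare two isometries on $K$-theory; the bookkeeping reductions (that $\phi\circ f\in C_0(Z')$, that the commutator test reduces to functions pulled back from $Z$, that $D^*_\Gamma$ is unital) are also fine. But there is a genuine gap exactly at the step you single out as the crux. Definition \ref{VRiso2} does \emph{not} contain the relation $\pi^\infty_Z(\phi)W-W\pi^\infty_{Z'}(\phi\circ f)\in C(X,\maK)$; it only gives the compressed relation $W^*\pi^\infty_Z(\phi)W-\pi^\infty_{Z'}(\phi\circ f)\in\maK$. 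The two statements differ by the corner $(\id-WW^*)\,\pi^\infty_Z(\phi)\,W$, whose compactness is not part of the definition and is precisely the nontrivial content of this lemma: it is equivalent to $[\,\id_{C(X)}\otimes WW^*,\pi^\infty_Z(\phi)\,]\in C(X,\maK)$, i.e.\ to $\id_{C(X)}\otimes WW^*\in D^*_\Gamma(X;(Z,\ell^2\Gamma^\infty\otimes L^2Z))$, and this is exactly what the paper establishes first (via the $2\times2$ decomposition of $\pi$ with respect to $P=\id_{C(X)}\otimes WW^*$ and the compactness of the off-diagonal corners $\pi_{12},\pi_{21}$) before running the commutator computation. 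Since you use the uncompressed relation both in the main computation $[\Ad_W(T),\pi^\infty_Z(\phi)]\sim W[T,\pi^\infty_{Z'}(\phi\circ f)]W^*$ and in the claim $W_iW_j^*\in D^*_\Gamma$, the gap propagates through both halves of your argument. Moreover, the justification you point to---the equivariant Voiculescu theorem of the appendix that \emph{produces} such isometries---cannot close it, because the lemma is stated for an arbitrary $W$ satisfying Definition \ref{VRiso2}, not only for the isometries constructed there; the derivation must proceed from the definition itself.

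The missing step can be supplied by a short corner argument, which is in substance what the paper's matrix computation does. Writing $P=\id_{C(X)}\otimes WW^*$ and $A:=(\id-P)\pi^\infty_Z(\phi)(\id_{C(X)}\otimes W)$, the compressed relation and its multiplicativity give, modulo $C(X,\maK)$,
$$
A^*A \;=\; (\id\otimes W^*)\pi^\infty_Z(\bar\phi\phi)(\id\otimes W)\;-\;\bigl((\id\otimes W^*)\pi^\infty_Z(\bar\phi)(\id\otimes W)\bigr)\bigl((\id\otimes W^*)\pi^\infty_Z(\phi)(\id\otimes W)\bigr)\;\sim\;0,
$$
hence $A$ itself is compact (compacts form a closed ideal), and therefore $\pi^\infty_Z(\phi)(\id\otimes W)=(\id\otimes W)\bigl((\id\otimes W^*)\pi^\infty_Z(\phi)(\id\otimes W)\bigr)+A\sim(\id\otimes W)\pi^\infty_{Z'}(\phi\circ f)$; this also yields $[P,\pi^\infty_Z(\phi)]\in C(X,\maK)$, so $P\in D^*_\Gamma$. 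With this lemma in hand, your computations for $\Ad_W(T)$, for $W_1W_2^*\in D^*_\Gamma$, and the conjugation by $U$ go through and coincide with the paper's proof.
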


\begin{proof}\
 The arguments in \cite{SiegelThesis}[Propositions 3.3.12-3.3.15] can be adapted to our situation to prove this lemma.
Set $Y=X\times Z$, $Y'=X\times Z'$ and  let $E:=C(X)\otimes L^2Z'\otimes \ell^2\Gamma^\infty$ and $E':=C(X)\otimes L^2Z'\otimes \ell^2\Gamma^\infty$ be the corresponding Hilbert $\Gamma$-equivariant $C(X)$-modules. Recall the representations
\begin{multline*}
\pi_Y: C_0(Y)\simeq C(X)\otimes C_0(Z) \longrightarrow \maL_{C(X)} (C(X)\otimes L^2Z) \\ \text{ and }\pi_{Y'}: C_0(Y')\simeq C(X)\otimes C_0(Z') \longrightarrow \maL_{C(X)} (C(X)\otimes L^2Z').
\end{multline*}
We then set $\pi := \pi_Y\otimes \id_{\ell^2\Gamma^\infty}$  and $\pi':= \pi_{Y'}\otimes \id_{\ell^2\Gamma^\infty}$.
 The operator $P=\id_{C(X)}\otimes WW^*$ is an adjointable $\Gamma$-invariant projection, and we thus have a decomposition:
  $$
  E= PE \oplus (I-P)E
  $$
The representation $\pi$ then writes in  this decomposition
  $$
  \pi = \begin{bmatrix} \pi_{11} & \pi_{12}\\ \pi_{21} & \pi_{22}  \end{bmatrix},
  $$
with each diagonal element $\pi_{jj}$ for $j=1,2$, being a $*$-homomorphism  modulo $\maK_{C(X)}(E)$ and with the off-diagonal operators $\pi_{12}(\phi)$ and $\pi_{21}(\phi)$ being compact operators, for any $\phi\in C_0(Y)$. Indeed, let us use the standard notation for two adjointable operators $S_1, S_2 \in \maL_{C_0(X)}(\bullet, \bullet)$, we write $S_1\sim S_2$  if $S_1-S_2$ is a compact operator. Then given $\varphi_1, \varphi_2\in C_0(Y)$, we have
\begin{eqnarray*}
\pi_{11} (\varphi_1\varphi_2) - \pi_{11} (\varphi_1) \pi_{11} (\varphi_2) & = & P \pi(\varphi_1) (I-P) \pi(\varphi_2) P\\
 \sim  P \pi(\varphi_1) (\id_{C(X)}\otimes W) \pi'(\varphi_2\circ f) (\id_{C(X)}\otimes W^*)& - & P\pi(\varphi_1)  (\id_{C(X)}\otimes W) \pi'(\varphi_2\circ f) (\id_{C(X)}\otimes W^*) P\\
 \sim P (\id_{C(X)}\otimes W) \pi'(\varphi_1\circ f) \pi' (\varphi_2\circ f) (\id_{C(X)}\otimes W^*) & - &  (\id_{C(X)}\otimes W) \pi'(\varphi_1\circ f) \pi' (\varphi_2\circ f) (\id_{C(X)}\otimes W^*) P\\
 = (\id_{C(X)}\otimes W) \pi'(\varphi_1\circ f) \pi' (\varphi_2\circ f) (\id_{C(X)}\otimes W^*)  & - & (\id_{C(X)}\otimes W) \pi'(\varphi_1\circ f) \pi' (\varphi_2\circ f) (\id_{C(X)}\otimes W^*)P\\
 &=& 0
\end{eqnarray*}
{A} similar computation proves the other claims regarding $\pi_{22}$, $\pi_{12}$ and $\pi_{21}$.
  Then, using the following equations for any $\phi\in C_0(Y)$:
  $$
  \pi_{21}(\phi)= \pi (\phi)P- P\pi (\phi)P \quad \text{ and } \quad \pi_{12}(\phi)= P\pi (\phi)-P\pi (\phi)P,
  $$
we deduce that $[P,\pi (\phi)]\in \maK_{C_0(X)}(E)$.

Moreover, since $W$ has finite propagation by property (1), we deduce that $P \in D^*_\Gamma (X; (Z, \ell^2\Gamma^\infty\otimes L^2Z))$.

Let now $T$ be a given element of $D^*_\Gamma (X; (Z', \ell^2\Gamma^\infty\otimes L^2Z'))$.
Then  we have the following:
  \begin{eqnarray*}
   &&\pi'(\phi)T  \sim  T\pi'(\phi)\\
   &\Rightarrow& (\id_{C(X)}\otimes W^*)\pi(\phi)(\id_{C(X)} \otimes W)T\sim T(\id_{C(X)} \otimes W^*) \pi(\phi) (\id_{C(X)} \otimes W)\\
   &\Rightarrow& (\id_{C(X)} \otimes WW^*)\pi (\phi)(\id_{C(X)} \otimes W)T(\id_{C(X)} \otimes W^*)\sim (\id_{C(X)} \otimes W)T(\id_{C(X)} \otimes W^*)\pi(\phi)(\id_{C(X)} \otimes WW^*)\\
   &\Rightarrow& \pi(\phi)(\id_{C(X)} \otimes WW^*W)T(\id_{C(X)} \otimes W^*)\sim (\id_{C(X)} \otimes W)T(\id_{C(X)} \otimes W^*WW^*)\pi(\phi)\\
   &\Rightarrow& \pi (\phi)(\id_{C(X)} \otimes W)T(\id_{C(X)} \otimes W^*)\sim (\id_{C(X)} \otimes W)T(\id_{C(X)} \otimes W^*)\pi(\phi)\\
   &\Rightarrow& [\pi(\phi),(\id_{C(X)} \otimes W)T(\id_{C(X)} \otimes W^*)]\sim 0
  \end{eqnarray*}
Also, we have $\Prop((\id_{C(X)} \otimes W)T(\id_{C(X)} \otimes W^*)) \leq 2\Prop(W) + \Prop(T)$, so we get
$$
\Ad_W(T)\in D^*_\Gamma (X; (Z, \ell^2\Gamma^\infty\otimes L^2Z)).
$$
Let $W_1$ and $W_2$ be two {Roe-Voiculescu} covering $\Gamma$-isometries for $f$. Note that
$$
\id_{C(X)} \otimes W_jW^*_j \in D^*_\Gamma (X; (Z, \ell^2\Gamma^\infty\otimes L^2Z))\text{ for }j =1,2,
$$
 {as we have already shown}. We claim that $\id_{C(X)} \otimes W_1W^*_2$ and $\id_{C(X)} \otimes W_2W^*_1$ also belong to
$D^*_\Gamma (X; (Z, \ell^2\Gamma^\infty\otimes L^2Z))$. Given this claim, it is easy to check that for $T\in D^*_\Gamma (X; (Z', \ell^2\Gamma^\infty\otimes L^2Z'))$, the following relation
holds in $M_2(D^*_\Gamma (X; (Z, \ell^2\Gamma^\infty\otimes L^2Z)))$:
\begin{multline*}
(\id_{C(X)} \otimes U) \begin{bmatrix} (\id_{C(X)} \otimes W_1)T(\id_{C(X)} \otimes W_1^*) & 0\\ 0 & 0 \end{bmatrix} (\id_{C(X)} \otimes  U) =  \begin{bmatrix} 0 & 0\\ 0 & (\id_{C(X)} \otimes W_2)T(\id_{C(X)} \otimes W_2^*) \end{bmatrix},
\end{multline*}
where $U$ is the self-adjoint unitary matrix
$$
U= \begin{bmatrix} \id_{\ell^2\Gamma^\infty\otimes L^2Z}- W_1W_1^* & W_1W_2^*\\ W_2W_1^* & \id_{\ell^2\Gamma^\infty\otimes L^2Z}- W_2W_2^* \end{bmatrix}.
$$
Hence $(\Ad_{W_1})_* = (\Ad_{W_2})_*$ on $K$-theory.

Let us show the claim that $\id_{C(X)}\otimes W_1W^*_2 \in D^*_\Gamma (X; (Z, \ell^2\Gamma^\infty\otimes L^2Z))$ (the proof for $W_2W_1^*$ follows by symmetry). Since $(\id_{C(X)}\otimes W_1^*)\pi (\phi) (\id_{C(X)}\otimes W_1) \sim \pi'(\phi\circ f) \sim (\id_{C(X)}\otimes W_2^*) \pi (\phi) (\id_{C(X)}\otimes W_2)$, we have:
$$
(\id_{C(X)}\otimes W_1W_1^*) \pi (\phi) (\id_{C(X)}\otimes W_1W_2^*) \sim (\id_{C(X)}\otimes W_1W_2^*) \pi (\phi)(\id_{C(X)}\otimes W_2W_2^*).
$$
Since  $W_jW^*_j \in D^*_\Gamma (X; (Z, \ell^2\Gamma^\infty\otimes L^2Z))$ for $j =1,2$,  we get
$$
\pi (\phi)(\id_{C(X)}\otimes  W_1W_1^*W_1W_2^*) \sim (\id_{C(X)}\otimes W_1W_2^*W_2W_2^*) \pi (\phi) \Rightarrow [\pi (\phi), \id_{C(X)}\otimes W_1W_2^*] \sim 0.
$$
Again, $\Prop(W_1W_2^*)\leq \Prop(W_1)+\Prop(W_2)$, so in fact $W_1W_2^* \in D^*_\Gamma (X; (Z, \ell^2\Gamma^\infty\otimes L^2Z))$ and we are done.
\end{proof}

As a corollary of Lemma \ref{VR-isometry} and  Lemma \ref{functorial4}, we deduce

\begin{proposition}
Any continuous {{$\Gamma$-coarse map $f: Z'\to Z$}} induces a well defined group morphism
$$
f_*\;:\; K_*\left( D^*_\Gamma (X; (Z', \ell^2\Gamma^\infty\otimes L^2Z'))\right) \longrightarrow K_*\left( D^*_\Gamma (X; (Z, \ell^2\Gamma^\infty\otimes L^2Z))\right),
$$
as well as a well defined group morphism
$$
f_*\;:\; K_*\left( Q^*_\Gamma (X; (Z', \ell^2\Gamma^\infty\otimes L^2Z'))\right) \longrightarrow K_*\left( Q^*_\Gamma (X; (Z, \ell^2\Gamma^\infty\otimes L^2Z))\right),
$$
\end{proposition}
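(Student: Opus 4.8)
The plan is to read off the Proposition from Lemma~\ref{VR-isometry} together with Lemma~\ref{functorial4}, and then to check compatibility with the Roe ideal so as to pass to the quotient. Since $f\colon Z'\to Z$ is a continuous $\Gamma$-coarse map, Lemma~\ref{VR-isometry} provides a Roe-Voiculescu covering $\Gamma$-isometry $W\colon \ell^2\Gamma^\infty\otimes L^2Z'\to \ell^2\Gamma^\infty\otimes L^2Z$ for $f$. By Lemma~\ref{functorial4}, the formula $\Ad_W(T)=(\id_{C(X)}\otimes W)\,T\,(\id_{C(X)}\otimes W^*)$ then defines a $C^*$-algebra homomorphism from $D^*_\Gamma (X; (Z', \ell^2\Gamma^\infty\otimes L^2Z'))$ to $D^*_\Gamma (X; (Z, \ell^2\Gamma^\infty\otimes L^2Z))$, and the induced map on $K$-theory does not depend on the choice of $W$; I would define $f_*$ on $K_*(D^*_\Gamma)$ to be this map. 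Functoriality $(f\circ f')_*=f_*\circ f'_*$ for composable continuous $\Gamma$-coarse maps then follows as before, since a composition of Roe-Voiculescu covering $\Gamma$-isometries is a Roe-Voiculescu covering $\Gamma$-isometry for the composite map.

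To descend to the quotients, I would first observe that $\Ad_W$ maps the Roe ideal $C^*_\Gamma (X; (Z', \ell^2\Gamma^\infty\otimes L^2Z'))$ into $C^*_\Gamma (X; (Z, \ell^2\Gamma^\infty\otimes L^2Z))$. Indeed, a Roe-Voiculescu covering $\Gamma$-isometry is in particular a Roe covering $\Gamma$-isometry for $f$, so the argument of Lemma~\ref{functorial2} applies verbatim: finite propagation is preserved because $\Prop(\Ad_W(T))\leq 2\Prop(W)+\Prop(T)$, and if $T$ is locally compact and $\phi\in C_c(X\times Z)$ one chooses $\phi'\in C_c(X\times Z')$ with $\pi(\phi)(\id_{C(X)}\otimes W)=\pi(\phi)(\id_{C(X)}\otimes W)\,\pi'(\phi')$, so that $\pi(\phi)\Ad_W(T)=\pi(\phi)(\id_{C(X)}\otimes W)\big(\pi'(\phi')T\big)(\id_{C(X)}\otimes W^*)\in C(X,\maK)$. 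Hence $\Ad_W$ passes to a $C^*$-algebra homomorphism $\overline{\Ad_W}$ from $Q^*_\Gamma (X; (Z', \ell^2\Gamma^\infty\otimes L^2Z'))$ to $Q^*_\Gamma (X; (Z, \ell^2\Gamma^\infty\otimes L^2Z))$, and hence to a map on $K_*(Q^*_\Gamma)$.

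It remains to check that this map is independent of $W$. Given two Roe-Voiculescu covering $\Gamma$-isometries $W_1,W_2$ for $f$, the self-adjoint unitary $U\in M_2\big(D^*_\Gamma (X; (Z, \ell^2\Gamma^\infty\otimes L^2Z))\big)$ exhibited in the proof of Lemma~\ref{functorial4} has entries $W_iW_j^*$ which, by the computation above applied to them, normalize the ideal $C^*_\Gamma$; thus conjugation by $U$ is an inner automorphism of $D^*_\Gamma (X; (Z, \ell^2\Gamma^\infty\otimes L^2Z))$ that preserves the ideal, and it carries $\Ad_{W_1}\oplus 0$ to $0\oplus\Ad_{W_2}$. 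Reducing modulo $M_2(C^*_\Gamma)$, the image $\overline U$ of $U$ is still a self-adjoint unitary and the same identity shows that $\overline{\Ad_{W_1}}\oplus 0$ and $0\oplus\overline{\Ad_{W_2}}$ are conjugate by $\overline U$; since inner automorphisms act trivially on $K$-theory, $(\overline{\Ad_{W_1}})_*=(\overline{\Ad_{W_2}})_*$. That establishes well-definedness of $f_*$ on $K_*(Q^*_\Gamma)$, as required.

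The whole weight of the argument sits in the two quoted lemmas: Lemma~\ref{VR-isometry} is the $\Gamma$-equivariant, $C(X)$-parametrized version of Voiculescu's absorption theorem producing the covering isometries, and Lemma~\ref{functorial4} is where one verifies that $\Ad_W$ lands in $D^*_\Gamma$ and is $W$-independent on $K$-theory. The only point not already recorded there, and it is routine bookkeeping, is the compatibility with passing to the ideal $C^*_\Gamma$ and hence to the quotient $Q^*_\Gamma$ — which holds because $\Ad_W$ preserves finite propagation and local compactness, and because the conjugating unitary $U$ lies in $D^*_\Gamma (X; (Z, \ell^2\Gamma^\infty\otimes L^2Z))$ and normalizes its ideal. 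I do not anticipate a serious obstacle.
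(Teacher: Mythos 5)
Your argument is correct and follows the paper's own route: the result is deduced exactly as a corollary of Lemma~\ref{VR-isometry} and Lemma~\ref{functorial4}, with $f_*$ defined via $\Ad_W$ for a Roe-Voiculescu covering $\Gamma$-isometry. The only material you add is the (routine) verification that $\Ad_W$ preserves the ideal $C^*_\Gamma$ and that the conjugating unitary descends to $M_2(Q^*_\Gamma)$, which the paper leaves implicit; this filling-in is accurate.
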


\begin{corollary}
If {{$i: Z'\hookrightarrow Z$ is a coarse inclusion of a closed $\Gamma$-subspace}}, then we have well defined induced group morphisms:
$$
i_{Z'\subset Z}^D : K_*(D^*_\Gamma (X; (Z', L^2Z'\otimes \ell^2\Gamma^\infty)))\stackrel{}{\longrightarrow} K_*(D^*_\Gamma (X; (Z, L^2Z\otimes \ell^2\Gamma^\infty)))
$$
and
$$
i_{Z'\subset Z}^Q : K_*(Q^*_\Gamma (X; (Z', L^2Z'\otimes \ell^2\Gamma^\infty)))\stackrel{}{\longrightarrow} K_*(Q^*_\Gamma (X; (Z, L^2Z\otimes \ell^2\Gamma^\infty))).
$$
{{In particular, if $Z$ is cocompact and $i: Z'\hookrightarrow Z$ is the inclusion of a closed $\Gamma$-subspace, then the group morphisms $i_{Z'\subset Z}^D$ and $i_{Z'\subset Z}^Q$ are well defined.}}
\end{corollary}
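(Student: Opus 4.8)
The plan is to obtain this corollary as a direct instance of the preceding Proposition, by checking that the inclusion of a closed $\Gamma$-subspace belongs to the class of continuous $\Gamma$-coarse maps treated there. There is no analytic content to add: the whole point is matching hypotheses, and the only step that requires a moment's care is verifying that passing to a closed invariant subspace keeps us inside the topological and coarse category in which the source algebras are defined.

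First I would equip $Z'$ with the restricted metric $d':=d|_{Z'\times Z'}$ and the restricted $\Gamma$-action. Since $Z'$ is closed in the proper-metric space $(Z,d)$, every closed bounded subset of $(Z',d')$ has compact closure in $Z$ and, being closed in the closed set $Z'$, is itself compact; hence $(Z',d')$ is again a proper-metric space. Moreover $\Gamma$ acts on $Z'$ by isometries, and properness of the $\Gamma$-action on $Z$ restricts to properness on the invariant closed set $Z'$. Fixing a $\Gamma$-invariant fully supported Borel measure $\mu_{Z'}$ on $Z'$ as in Section~\ref{GammaFamilies}, the dual algebras $D^*_\Gamma(X;(Z',\ell^2\Gamma^\infty\otimes L^2Z'))$ and $Q^*_\Gamma(X;(Z',\ell^2\Gamma^\infty\otimes L^2Z'))$ are then defined exactly as there.

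Next I would verify that $i:Z'\hookrightarrow Z$ is a continuous $\Gamma$-coarse map in the sense of Definition~\ref{Gcoarse}. Continuity is just the subspace topology. The map $i$ is metrically proper, since $i^{-1}(B)=B\cap Z'$ is bounded in $Z'$ for every bounded $B\subseteq Z$. It is coarse, in fact isometric, because $d(i(z_1'),i(z_2'))=d'(z_1',z_2')$, so one may take $S=R$. And it is genuinely $\Gamma$-equivariant, $i(gz')=g\cdot i(z')$, so its coarse-equivariance constant is $0$. Hence $i$ is a continuous $\Gamma$-coarse map, and the preceding Proposition produces the well-defined group morphisms $i^D_{Z'\subset Z}$ and $i^Q_{Z'\subset Z}$.

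For the final clause, suppose in addition that $Z/\Gamma$ is compact. Any closed $\Gamma$-subspace $Z'\subseteq Z$ is then, by the first step, again a proper-metric $\Gamma$-space carrying a proper (indeed cocompact) $\Gamma$-action, and, as observed in the previous paragraph, the inclusion $i:Z'\hookrightarrow Z$ is automatically a coarse inclusion, being isometric and metrically proper. So the adjective ``coarse'' is vacuous in the cocompact case, and the first part of the corollary applies verbatim to yield $i^D_{Z'\subset Z}$ and $i^Q_{Z'\subset Z}$. The only possible obstacle anywhere in this argument is the bookkeeping flagged above, namely that a closed invariant subspace of a proper-metric $\Gamma$-space is again one; granting that, the corollary is a purely formal consequence of the Proposition.
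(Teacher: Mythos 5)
Your argument is correct and is essentially the paper's own proof: the paper likewise notes that the inclusion of a closed $\Gamma$-subspace is continuous (and, with the restricted metric, automatically a $\Gamma$-coarse map) and then invokes the preceding functoriality result (Lemma \ref{functorial4} via the Proposition) to obtain $i^D_{Z'\subset Z}$ and $i^Q_{Z'\subset Z}$. Your additional bookkeeping that a closed invariant subspace is again a proper-metric $\Gamma$-space, and that cocompactness renders the coarseness hypothesis vacuous, just makes explicit what the paper leaves implicit.
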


\begin{proof}
As already observed, the inclusion $i: Z'\hookrightarrow Z$ is  obviously continuous. Therefore, the corollary follows from Lemma \ref{functorial4}.
\end{proof}

\section{Paschke-Higson duality}\label{AppendixC}

We devote this section to the proof of the Paschke-Higson duality theorem for $\Gamma$-families. The classical version of this duality theorem can be consulted for instance in \cite{HigsonPaschke, Paschke}. {{In view of our main interest in this paper, namely the universal Higson-Roe sequence, we shall assume our proper $\Gamma$-spaces to be cocompact.}}

\subsection{Statement of the theorem}

We shall need some classical results due to Pimsner-Popa-Voiculescu \cite{PPV} which in turn extend the classical theorem of Voiculescu.
The goal of this section  is the proof of  the following  Paschke-Higson duality

\begin{theorem} \label{Paschkev2}
With the above notations {{and assuming that the action of $\Gamma$ on $Z$ is proper and cocompact}}, the Paschke map gives group isomorphisms
$$
K_i(Q^*_\Gamma(X; (Z, \ell^2\Gamma^\infty\otimes L^2Z)) \xrightarrow{\maP} KK_{\Gamma}^{i+1}(Z, X), \quad i\in \Z_2.
$$

\end{theorem}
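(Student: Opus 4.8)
\emph{Strategy.} The plan is to follow the classical proof of Paschke duality (\cite{Paschke, HigsonPaschke}, \cite{HRbook}) and to construct an explicit inverse to $\maP$ by bringing an arbitrary $\Gamma$-equivariant Kasparov cycle into a standard form carried by the fixed ample module $\maH:=C(X)\otimes L^2Z\otimes \ell^2\Gamma^\infty$ and the fixed representation $\pi_Z$. Recall that for $i=0$ the class of a projection $\bar p\in Q^*_\Gamma (X;(Z,\maH))$ is sent by $\maP_0$ to $(\maH,\pi_Z,2P-I)$, where $P\in D^*_\Gamma (X;(Z,\maH))$ is a self-adjoint lift of $\bar p$; the inverse map $\maP^{-1}$ should send a cycle $(E,\phi,F)$ to the class of a projection manufactured from $F$ after (a) replacing $(E,\phi)$ by $(\maH,\pi_Z)$ and (b) perturbing $F$ so that it has finite propagation. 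The case $i=1$ is entirely parallel, with a unitary of $Q^*_\Gamma (X;(Z,\maH))$ on one side and a graded cycle $\begin{pmatrix}0 & V^*\\ V & 0\end{pmatrix}$ on the other.

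\emph{Step 1: standardisation.} Given a cycle $(E,\phi,F)$ for $KK^{i+1}_\Gamma(Z,X)$, I would first use the $\Gamma$-equivariant Kasparov stabilisation theorem to reduce $E$ to the standard module, and then invoke the equivariant, $C(X)$-linear Voiculescu--Pimsner--Popa absorption theorem \cite{PPV}, in the guise of the Roe--Voiculescu covering $\Gamma$-isometry machinery, to obtain a $\Gamma$-equivariant, $C(X)$-linear unitary $W$ of zero propagation (it covers $\id_Z$) with $W^*\pi_Z(\cdot)W\equiv\phi(\cdot)$ modulo locally compact operators. Conjugating $F$ by $W$ is a homotopy in $KK^{i+1}_\Gamma(Z,X)$, so after it we may assume $(E,\phi)=(\maH,\pi_Z)$. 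Replacing $F$ by $\tfrac12(F+F^*)$ and applying functional calculus, we may moreover assume $F=F^*$, $\|F\|\le 1$; then $\pi_Z(f)(F^2-I)$ and $[F,\pi_Z(f)]$ are locally compact for every $f\in C_0(Z)$.

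\emph{Step 2: finite propagation and inversion.} Since the proper $\Gamma$-space $Z$ is cocompact, choose a $\Gamma$-invariant locally finite partition of unity $\sum_j\psi_j^2=1$ with finitely many $\Gamma$-orbits of indices and $D:=\sup_j\diam(\supp\psi_j)<\infty$, and set $F':=\sum_j\pi_Z(\psi_j)F\pi_Z(\psi_j)$ (strictly convergent, $\Gamma$-equivariant). Then $\Prop(F')\le D$, while $F-F'=\sum_j\pi_Z(\psi_j)[\pi_Z(\psi_j),F]$ is locally compact because for $g\in C_c(Z)$ only finitely many $\psi_j$ meet $\supp g$, so $\pi_Z(g)(F-F')$ is a finite sum of compact operators; similarly $(F')^2-I$ and $[F',\pi_Z(f)]$ have finite propagation and are locally compact, hence lie in $C^*_\Gamma (X;(Z,\maH))$. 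Thus $F'\in D^*_\Gamma (X;(Z,\maH))$ and $\bar p:=\big[\tfrac12(F'+I)\big]$ is a projection in $Q^*_\Gamma (X;(Z,\maH))$ (for $i=0$; for $i=1$, $F'$ is invertible modulo $C^*_\Gamma$ and determines a $K_1$-class). Define $\maP^{-1}([(E,\phi,F)]):=[\bar p]$. Running Steps 1--2 over $C(X)\otimes C[0,1]$, with a parametrised version of the absorption theorem and a single partition of unity, turns a homotopy of cycles into a homotopy in $Q^*_\Gamma$, so $\maP^{-1}$ descends to $KK^{i+1}_\Gamma(Z,X)$; additivity under direct sums makes it a homomorphism. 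That $\maP\circ\maP^{-1}=\id$ is immediate from the constructions: $\maP$ applied to $[\bar p]$ yields $(\maH,\pi_Z,F')$, which agrees with $(\maH,\pi_Z,F)$ as a Kasparov cycle since $F-F'$ is locally compact, and the latter is the standardisation of $(E,\phi,F)$. Conversely, for $[\bar p]\in K_i(Q^*_\Gamma)$ with self-adjoint lift $P\in D^*_\Gamma$, the cycle $\maP([\bar p])=(\maH,\pi_Z,2P-I)$ is already in standard form with $2P-I$ of finite propagation, so Step 2 alters it only by an element of $C^*_\Gamma$ and returns $[\bar p]$; hence $\maP^{-1}\circ\maP=\id$.

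\emph{Main obstacle.} The crux of the argument is Step 1: proving the equivariant, $C(X)$-linear Voiculescu--Pimsner--Popa absorption theorem, namely that $(\maH,\pi_Z)$ absorbs any countably generated $\Gamma$-equivariant $C(X)$-representation of $C_0(Z)$ through unitaries that are $\Gamma$-equivariant, $C(X)$-linear and of controlled propagation, together with its parametrised refinement needed for homotopy invariance. This is precisely where the results of Pimsner--Popa--Voiculescu enter and where cocompactness of the proper $\Gamma$-space $Z$ is used; once it is in place, the partition-of-unity perturbation and the bookkeeping of mutual inversion in Step 2 are routine.
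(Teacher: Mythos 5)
Your overall strategy is the same as the paper's (construct an inverse to $\maP$ by using an equivariant Pimsner--Popa--Voiculescu absorption to replace $(E,\phi)$ by the standard pair, then perturb the operator to one of finite propagation using properness/cocompactness), but your Step 2 has a genuine gap: exact $\Gamma$-invariance. Membership in $D^*_\Gamma(X;(Z,\maH))$ (and in the ideal $C^*_\Gamma$) requires operators that are \emph{exactly} $\Gamma$-equivariant, whereas the operator $F$ of a Kasparov cycle is only invariant modulo (locally) compact operators, and conjugation by the $\Gamma$-equivariant absorption unitary does not improve this. Your cut-down $F'=\sum_j\pi_Z(\psi_j)F\pi_Z(\psi_j)$ with a $\Gamma$-invariant partition of unity satisfies $gF'g^{-1}=\sum_j\pi_Z(\psi_j)\,(gFg^{-1})\,\pi_Z(\psi_j)$, so it is exactly invariant only if $F$ already was; in general $F'\notin D^*_\Gamma$, and likewise $(F')^2-I$ and $[F',\pi_Z(f)]$ fail the equivariance required to lie in $C^*_\Gamma$. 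The standard repair, and the one the paper uses, is to average over the group with a cut-off function $c$ for the proper cocompact action, replacing the operator by $\Av\bigl(\pi(\sqrt{c})\,F\,\pi(\sqrt{c})\bigr)=\sum_{g\in\Gamma}g\cdot\bigl(\pi(\sqrt{c})F\pi(\sqrt{c})\bigr)$: this is exactly $\Gamma$-invariant, has propagation bounded by $\diam(\supp c)$, and is a locally compact perturbation of $F$ (since $F$ is quasi-invariant and quasi-central), so it does not change the $KK_\Gamma$-class. Once you do this, your partition-of-unity step is superfluous.

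Two smaller points. First, the absorption unitary produced by the equivariant PPV theorem has \emph{finite}, not zero, propagation (its equivariance is itself obtained by a cut-off average), which is all you need; also note the paper does not invoke an equivariant stabilisation theorem as a black box but reconstructs it by hand (non-equivariant stabilisation, then a cut-off isometry into $\ell^2\Gamma\otimes L^2Z\otimes C(X)$, then amplification), precisely to control the discrepancy between the transported and the canonical $\Gamma$-actions; relatedly, after cutting to the $(1,1)$-corner the natural projection is $q\bigl(\tfrac12(W_{11}W_{11}^*+F_5)\bigr)$ rather than $q\bigl(\tfrac12(F'+I)\bigr)$, since one only has $(W_{11}F W_{11}^*)^2\sim W_{11}W_{11}^*$ modulo locally compacts. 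Second, for well-definedness you invoke a parametrised absorption theorem over $C(X)\otimes C[0,1]$; the paper avoids this extra input by checking invariance only under operator homotopies, degenerate cycles (via an Eilenberg swindle) and unitary equivalence, and it proves injectivity of $\maP$ directly by an Eilenberg swindle rather than through $\maP^{-1}\circ\maP=\id$, whose ``immediacy'' in your write-up tacitly uses independence of all choices made in the standardisation.
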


This theorem is already interesting when $Z$ is compact and $\Gamma$ is trivial. In this case, we get the following result which is a rephrasing of classical results from \cite{PPV}:

\begin{theorem}[Paschke duality theorem, the non-equivariant case]\label{Paschkev1}\
For any fiberwise ample  $C(X)$-representation of $C(X\times Z)$ in the Hilbert module $C(X)\otimes H$, we have  group isomorphisms
$$
K_i(Q^* (X; (Z,H))) \stackrel{\maP}{\longrightarrow} KK^{i+1} (Z, X), \quad i=0, 1.
$$
\end{theorem}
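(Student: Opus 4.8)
The strategy is to reduce Theorem \ref{Paschkev1} to the classical Paschke duality of Pimsner-Popa-Voiculescu \cite{PPV} by unravelling what $Q^*(X; (Z,H))$ and $KK^{i+1}(Z,X)$ mean in the trivial-$\Gamma$, $C(X)$-linear setting, and then checking that the Paschke-Higson map $\maP$ coincides, under these identifications, with the map PPV prove to be an isomorphism. Concretely, set $A = C(X\times Z) \cong C(X)\otimes C(Z)$ regarded as a $C(X)$-algebra, and $B = C(X)$. A fiberwise ample $C(X)$-representation of $A$ on the Hilbert $B$-module $C(X)\otimes H$ is, fibrewise over $x\in X$, an ample representation $\pi_x$ of $C(Z)$ on $H$ (this is the content of the ``fiberwise ample'' hypothesis). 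The Roe-type algebras then fit into the short exact sequence
$$
0 \to C^*(X;(Z,H)) \to D^*(X;(Z,H)) \to Q^*(X;(Z,H)) \to 0,
$$
and in the compact case ``finite propagation'' is vacuous, so $D^*(X;(Z,H)) = \{T\in \maL_{C(X)}(C(X)\otimes H) : [T,\pi(f)]\in C(X,\maK(H))\ \forall f\}$ is the (fibrewise) relative commutant of the representation modulo the $C(X)$-compacts, and $C^*(X;(Z,H)) = C(X,\maK(H))$ itself. Thus $Q^*(X;(Z,H))$ is precisely the Paschke dual algebra of the representation, relative to the ideal of $C(X)$-compact operators.

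\textbf{Key steps.} First I would record the description of $KK^{i+1}(Z,X)$ in purely $C(X)$-linear Kasparov-module terms: by Kasparov's picture (and the remark after the definition of $\maP$ in the excerpt), a class in $KK^{j}(Z,X) = KK^{j}(C(Z), C(X))$ is represented by a $\Z_2$-graded (or odd, depending on parity) Kasparov $(C(Z),C(X))$-cycle $(C(X)\otimes H, \varphi, F)$; since $X$ is compact and everything is $C(X)$-linear, one can always take the underlying module to be the standard ample one $C(X)\otimes H$. Second, I would invoke Paschke duality in the form proved by Pimsner-Popa-Voiculescu: for an ample representation of a separable $C^*$-algebra $A$ on a Hilbert module over $B = C(X)$ with the $C(X)$-compacts as the ideal, the boundary map of the above six-term sequence together with the identification of a class in $K_i(Q^*)$ with a Kasparov module (a projection $E$ in $Q^*$, i.e. a self-adjoint unitary $2E-I$ lifting to $D^*$, gives the cycle $(C(X)\otimes H, \pi, 2E-I)$) yields an isomorphism $K_i(Q^*(X;(Z,H))) \xrightarrow{\cong} KK^{i+1}(C(Z),C(X))$. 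Third, I would verify that this PPV isomorphism \emph{is} the map $\maP$ as defined in Section \ref{GammaFamilies}: tracing through the definition, the class of a projection $E\in Q^*_0$ goes to $[(C(X)\otimes H, \pi_Z, 2E-I)]$, with $\pi_Z = \pi\circ p_2^*$ the representation of $C_0(Z)$ obtained via the projection $Y=X\times Z\to Z$ — this is exactly the cycle PPV assign; the $i=1$ case is the suspension/Bott-dual version and is handled identically after unsuspending.

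\textbf{Main obstacle.} The genuine work is not in the formal reduction but in checking that the PPV framework applies verbatim with $B = C(X)$ (a commutative, but not scalar, coefficient $C^*$-algebra) in place of the scalars. PPV and the classical Paschke argument are stated for representations on Hilbert \emph{spaces}; here one needs the Hilbert $C(X)$-module version, which requires (a) a Hilbert-module form of Voiculescu's theorem — an absorption/quasidiagonality statement for $C(X)$-representations, which is exactly the generalization of Voiculescu's theorem alluded to in the excerpt (via \cite{PPV}, and also available in the Kasparov stabilization form) — to show that the ample representation absorbs all others, guaranteeing that every $KK$-cycle is equivalent to one on the fixed module $C(X)\otimes H$; and (b) the exactness and excision properties of $KK(\cdot, C(X))$, which hold since $C(X)$ is nuclear. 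Once the Hilbert-module Voiculescu theorem is in hand, the surjectivity of $\maP$ (every cycle is a compact perturbation of one coming from a projection in $Q^*$) and injectivity (a cycle bounding in $KK$ corresponds to a projection bounding in $K_*(Q^*)$, via the mapping-cone/Paschke argument) follow by the standard diagram chase. I would therefore structure the proof so that the Hilbert $C(X)$-module Voiculescu theorem (a consequence of \cite{PPV} and Kasparov's technical theorem) is isolated as the one nontrivial input, with the rest being the identification of $\maP$ with the resulting boundary isomorphism.
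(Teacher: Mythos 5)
Your proposal is correct and follows essentially the same route as the paper: the one nontrivial input is the Hilbert $C(X)$-module (fiberwise ample / homogeneous-extension) form of Voiculescu's theorem from \cite{PPV}, which the paper isolates as Proposition \ref{VRisometry}, and the rest consists of transporting an arbitrary Kasparov $(C(Z),C(X))$-cycle onto the standard module $C(X)\otimes H$ via stabilization plus the PPV unitary and identifying $\maP$ with the resulting correspondence between projections in $Q^*(X;(Z,H))$ and cycles $(\pi, C(X)\otimes H, 2P-I)$. The paper writes the inverse map $\maP'$ explicitly (via $\tilde{P'}=S(1+F')S^*/2$) rather than arguing by a diagram chase, but this is the same argument in substance.
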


Let us explain the relation with the PPV work in this non-equivariant case. Set  $p_2:X\times Z\to Z$ for the second projection. A $C(X)$-representation $\what\pi: C(X\times Z)\rightarrow \maL_{C(X)}(\maE)$ on a Hilbert $C(X)$-module $\maE$ corresponds to a family of representations $\pi_x: C(Z)\rightarrow \maL(\maE_x)$ on the associated field of Hilbert spaces, given by localizing at $x$. Notice that one can use
the homomorphism $p_2^*$ to deduce a representation $\pi: C(Z)\rightarrow \maL_{C(X)}(\maE)$, which is the one associated with this field $(\pi_x)_{x\in X}$ of Hilbert space representations.

In the terminology of the seminal paper \cite{PPV}[Sections 1 and 2] which considers the case of the free module $\maE= C(X)\otimes H$,  {the $C(X)$-representation $\what\pi: C(X\times Z) \rightarrow \maL_{C(X)}(\maE)$
 is fiberwise ample}, that is the corresponding field of representations {is} composed of ample representations (see more precisely Definition \ref{FiberwiseAmple} below),
if and only if the {$X$-extension} associated to the representation $\pi: C(Z)\rightarrow \maL_{C(X)}(\maE)$ has trivial ideal symbol, i.e. the $X$-extension that $\pi$ induces is homogeneous. If for instance $\chi$ is a given ample representation of $C(Z)$ in the Hilbert space $H$, then the associated $C(X)$-representation $\what\chi$ of $C(X\times Z)$ in $\maL_{C(X)} (C(X)\otimes H)$ is clearly fiberwise ample.
Any fiberwise ample representation $\what\pi: C(X\times Z)\rightarrow \maL_{C(X)} (C(X)\otimes H)$ gives rise to a trivial $X$-extension through its associate representation $\pi$, as follows. Let $B:= \pi (C(Z))+ C(X)\otimes \maK(H)\subseteq \maL_{C(X)} (C(X)\otimes H)$ and if $p$ is the Calkin projection for the Hilbert module $C(X)\otimes H$, then we also set $\tau:= p \circ \pi :C(Z)\rightarrow \maL_{C(X)}(C(X) \otimes H)/C(X)\otimes \maK(H)$ which is then a monomorphism. This yields the trivial $X$-extension in the terminology of \cite{PPV}:
$$
0\rightarrow C(X)\otimes \maK(H) \hookrightarrow B\xrightarrow{\sigma} C(Z) \rightarrow 0
$$
with $\sigma$ given by the composite map $B\rightarrow B /C(X)\otimes \maK(H)\xrightarrow{\cong} \tau (C( Z))\xrightarrow{\cong} C(Z)$. As mentioned above this $X$-extension is in fact homogeneous.

Then, we can rewrite the main theorem from \cite{PPV} that is used here:

\begin{proposition}[\cite{PPV}]\label{VRisometry} Let $\what\pi_1$ and $\what\pi_2$ be two fiberwise ample $C(X)$-representations of $A=C(X\times Z)$ in the Hilbert modules $C(X)\otimes H_1$  and $C(X)\otimes H_2$ respectively. Then there exists a unitary $S\in \maL_{C(X)} (C(X)\otimes H_1, C(X)\otimes H_2)$ such that
  $$
  S^*\what\pi_2 (\varphi) S-\what\pi_1(\varphi) \in C(X)\otimes \maK(H_1) \text{ for all } \varphi\in C(X\times Z).
  $$
 \end{proposition}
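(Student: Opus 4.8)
The statement is the parametrised (over the compact base $X$) version of Voiculescu's non-commutative Weyl--von Neumann theorem, and the plan is to reprove it along the lines of, or simply invoke, the homogeneous $C(X)$-extension theory of \cite{PPV}. The first step is to reduce from $A=C(X\times Z)=C(X)\otimes C(Z)$ to the single algebra $C(Z)$: since $\what\pi_i$ is a $C(X)$-representation, $\what\pi_i(f\otimes g)=f\cdot\pi_i(g)$ where $\pi_i:=\what\pi_i\circ p_2^*:C(Z)\to\maL_{C(X)}(C(X)\otimes H_i)$, so any $C(X)$-linear adjointable unitary $S$ that intertwines $\pi_1$ and $\pi_2$ modulo $C(X)\otimes\maK(H_1)$ automatically intertwines $\what\pi_1$ and $\what\pi_2$ modulo the same ideal, the $C(X)$-part being implemented by $C(X)$-scalars. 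The second step, which is exactly the discussion preceding the statement, is to record that fiberwise ampleness is precisely the condition that the $X$-extension $0\to C(X)\otimes\maK(H_i)\to B_i\xrightarrow{\sigma_i}C(Z)\to 0$ attached to $\pi_i$ is \emph{trivial} (it lifts, being an honest representation) and \emph{homogeneous with trivial ideal symbol}. The Proposition thus becomes: any two such extensions are strongly unitarily equivalent.

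\textbf{Core argument.} The heart of the matter is a parametrised Voiculescu absorption statement: if $\what\pi$ is fiberwise ample and $\what\sigma$ is an arbitrary $C(X)$-representation of $C(X\times Z)$, then $\what\pi\oplus\what\sigma$ is unitarily equivalent to $\what\pi$ modulo $C(X)\otimes\maK$. (Note that $\pi(C(Z))\cap(C(X)\otimes\maK(H))=0$ follows fiberwise from ampleness, so Voiculescu's hypotheses are in force.) Granting this, the Proposition follows by the usual symmetry trick: $\what\pi_1$ is unitarily equivalent mod compacts to $\what\pi_1\oplus\what\pi_2$, which is honestly unitarily equivalent (via the $C(X)$-linear flip) to $\what\pi_2\oplus\what\pi_1$, which is unitarily equivalent mod compacts to $\what\pi_2$; the composite of the three unitaries is the desired $S$. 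To prove the absorption statement one would run Voiculescu's original proof "with a base": the two ingredients are (i) the existence of a quasicentral approximate unit for the pair $\bigl(\pi(C(Z)),\,C(X)\otimes\maK(H)\bigr)$ inside $\maL_{C(X)}(C(X)\otimes H)$, available because $A$ is separable and $C(X)\otimes\maK(H)$ is a $\sigma$-unital ideal, and (ii) the "room to move" supplied by fiberwise ampleness, which lets one extract isometries nearly intertwining $\sigma_x$ with $\pi_x$ up to compacts and with ranges nearly orthogonal to the relevant finite-dimensional pieces, fiberwise and then continuously in $x$. The approximate intertwiners so produced are assembled into a single $C(X)$-linear unitary by a partition-of-unity patching over $X$ (using that $X$ is compact metrizable of finite covering dimension to reduce to finitely many charts), and the resulting sequence of approximate unitary intertwiners is upgraded to a single unitary $S$ by a Kasparov/BDF-type "repeat-and-rotate" construction, yielding strong equivalence of the extensions.

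\textbf{Main obstacle.} The genuinely delicate point, and the reason this is not merely a pointwise application of Voiculescu's theorem, is the passage from the fiberwise statement to the continuous-in-$x$ statement: applying the classical theorem at each $x\in X$ produces a family $(S_x)_{x\in X}$ with no a priori control on its dependence on $x$, whereas one needs simultaneously $*$-strong continuity of $x\mapsto S_x$ \emph{and} norm-continuity of the field $x\mapsto S_x^*\pi_{2,x}(g)S_x-\pi_{1,x}(g)$ of compact operators (mere fiberwise compactness does not give membership in $C(X)\otimes\maK$). This is exactly where the topological hypotheses on $X$ are used, and it is the content of the $C(X)$-equivariant refinements of Voiculescu's theorem carried out in \cite{PPV}; for the purposes of this paper it therefore suffices to cite their result, the sketch above indicating how a self-contained proof would be organised.
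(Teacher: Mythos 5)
Your proposal is correct and follows essentially the same route as the paper: reduce to the representations $\pi_i=\what\pi_i\circ p_2^*$ of $C(Z)$, observe that fiberwise ampleness means the associated $X$-extensions are homogeneous with trivial ideal symbol so that the Pimsner--Popa--Voiculescu theorem supplies the unitary $S$, and then use $C(X)$-linearity of $S$ to pass from $C(Z)$ back to $C(X\times Z)$. The extra sketch of how one would reprove the PPV absorption theorem "with a base" is consistent but not needed, since, as you conclude, the paper likewise simply cites \cite{PPV}[Proposition 2.9] for that step.
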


\begin{proof}
This proposition is a corollary of the more general statement proved in \cite{PPV}[Proposition 2.9]. More precisely,
the two $C(X)$-representations $\what\pi_1$ and $\what\pi_2$  yield the representations $\pi_1$ and $\pi_2$ of $C(Z)$,
associated as above by composing with $p_2^*$. Since the ideal symbols of $\pi_1$ and $\pi_2$ are homogeneous, the PPV theorem insures the existence of a unitary $S\in \maL_{C(X)} (C(X)\otimes H_1, C(X)\otimes H_2)$ such that
$$
  S^*\pi_2 (f) S-\pi_1(f) \in C(X)\otimes \maK(H_1) \text{ for all } f \in C(Z).
$$
But then for any given continuous function $u$ on $X$, we have by definition of $C(X)$-representations:
$$
S^*\what\pi_2 (u\otimes f) S-\what\pi_1(u\otimes f) = S^*\circ [R_u\circ \what\pi_2 (1\otimes f)] \circ  S - R_u\circ \what\pi_1 (1\otimes f),
$$
where $R_u$ is multiplication on the right by $u$, say the module structure of $C(X)\otimes H_1$. So, $S^*$ being $C(X)$ linear, we get
$$
S^*\what\pi_2 (u\otimes f) S-\what\pi_1(u\otimes f) = R_u\circ [S^*\pi_2 (f) S-\pi_1(f)]
$$
Since $S^*\pi_2 (f) S-\pi_1(f)$ is a compact operator of the Hilbert module $C(X)\otimes H_1$, the proof is complete.
\end{proof}

Theorem \ref{Paschkev1} is then an easy corollary of the previous proposition. Indeed, let us treat the case $i=0$ since the argument is similar  for $i=1$.
Given a projection $P$ in $Q^*(X; (Z,H))$, the triple $(\pi, C(X) \otimes H, 2P-I)$ is a Kasparov cycle for the pair of $C^*$-algebras $(C(Z), C(X))$. Moreover, it is easy to check that  this yields a well defined group morphism
$$
K_0(Q^*(X; (Z,H))) \stackrel{\maP}{\longrightarrow} KK^{1}(Z, X)
$$
obtained  by setting $\maP([P])=[(\pi, C(X) \otimes H, 2P-I)]$. An inverse for the map $\maP$ is then constructed as follows. Let $y \in KK^{1}(Z, X)$ be represented by a Kasparov cycle $(\pi', E', F')$. Thanks to the Kasparov stabilisation
theorem, we can assume without lost of generality that $E'$ is a submodule of some $C(X)\otimes H'$. Replacing $\pi'$ by $\pi'\oplus \pi$ and {$F'$ by $\diag (F',\id)$}, one can also assume that $\pi'$ is fibrewise ample. So, by {Proposition} [\ref{VRisometry}], there is a unitary
$S \in \maL_{C(X)}(C(X) \otimes H', C(X) \otimes H)$ such that $S^*\pi(f)S- \pi'(f) \in \maK_{C(X)} (C(X)\otimes H')$ for any $f\in C(Z)$. Then the cycle $(\pi, C(X) \otimes H, SF'S^*)$ is equivalent to the cycle $(\pi',E',F')$. Setting
$$
\tilde{P'}= \frac{S(1+{F'})S^*}{2},
$$
it is not difficult to check using the conditions on a $KK$-cycle, 
that $\tilde{P'}$ is a projection in $Q^*(X; (Z,H))$.
The map $\maP': KK^1(Z, X)\rightarrow K_0(Q^*(X;(Z,H)))$ given by
$$
\maP'([\pi',E',F')]= [\tilde{P'}]
$$
 can then be verified to be a well-defined group homorphism, and to be an inverse to $\maP$.\\

Our strategy to prove  Theorem \ref{Paschkev2} will be, no surprise,  to use  an extended version  of the Pimsner-Popa-Voiculescu theorem.

\subsection{Proof of the Paschke-Higson theorem}

Consider  the finite dimensional compact metrizable space $X$ and the locally compact metric space $Z$. Finite dimension is needed inorder to apply the results of \cite{PPV} which used the Michael selection theorem.   We consider the proper second projection $p_2:X\times Z\to Z$ so that any $C(X)$-representation $\what\pi: C_0(X\times Z) \rightarrow \maL_{C(X)} (\maE)$ in the Hilbert $C(X)$-module $\maE$ induces using $p_2^*$, a representation  $\pi:C_0(Z)\rightarrow \maL_{C(X)} (\maE)$ which is  associated with the field $\pi_{x}: C_0(Z)\rightarrow \maL (\maE_x)$  of Hilbert spaces representations in the associated field of Hilbert spaces, obtained by localizing at any given $x\in X$.

\begin{definition}\label{FiberwiseAmple} The $C(X)$-representation $\what\pi: C_0(X\times Z)\rightarrow \maL_{C(X)} (\maE)$ will be called a fibrewise ample representation if for any $x\in X$, the representation $\pi_{x}: C_0(Z)\rightarrow \maL (\maE_x)$ is ample, i.e.  for any $x\in X$, $\pi_x$ is non-degenerate and one has
$$
\pi_{x} (f) \in \maK(\maE_x) \Longrightarrow f=0, \quad  \text{ for any }f\in C_0(Z).
$$
\end{definition}


The right regular  representation of $\Gamma$  in the Hilbert space $\ell^2\Gamma$ is denoted $\rho$, and its tensored product by the identity of $\ell^2\N$ is the unitary representation $\rho^\infty$ of $\Gamma$ in $\ell^2\Gamma^\infty$. We shall use  the following generalization of the PPV theorem, whose detailed proof is tedious and is expanded in \cite{ExtendedPPV}.

\medskip

 \begin{theorem}\label{EquivPPV}
Let $\what\pi_1$ and $\what\pi_2$ be two fiberwise ample $\Gamma$-equivariant representations of $A=C_0(X\times Z)$ in the Hilbert $\Gamma$-equivariant $C(X)$-modules $C(X)\otimes H_1$  and $C(X)\otimes H_2$ respectively. Then, identifying each $\what\pi_i$ with the trivially extended representation $\left(\begin{array}{cc} \what\pi_i & 0\\ 0 & 0\end{array}\right)$ that is further tensored by the identity of $\ell^2\Gamma^\infty$, there exists a $\Gamma$-invariant unitary operator with finite propagation
$$
W\in \maL_{C(X)} (C(X)\otimes {(H_1\oplus H_2)\otimes \ell^2\Gamma^\infty, C(X)\otimes (H_2\oplus H_1)\otimes \ell^2\Gamma^\infty}),
$$
 such that
$$
  W^*\what\pi_2 (\varphi) W - \what\pi_1(\varphi) \in C(X)\otimes \maK[{(H_1\oplus H_2)}\otimes \ell^2\Gamma^\infty], \quad \text{ for all } \varphi\in C_0(X\times Z).
  $$
\end{theorem}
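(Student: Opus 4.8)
The plan is to reduce Theorem \ref{EquivPPV} to the classical non-equivariant Pimsner-Popa-Voiculescu statement (Proposition \ref{VRisometry}) by exploiting the fact that the two-sided regular representation of $\Gamma$ appearing in the passage $H \mapsto H\otimes\ell^2\Gamma^\infty$ is sufficiently ``big'' to absorb equivariance. First I would set up the relevant covariant picture: a $\Gamma$-equivariant $C(X)$-representation $\what\pi_i$ of $C_0(X\times Z)$ on $C(X)\otimes H_i$ is the same data as a covariant pair $(\what\pi_i, U_i)$ where $U_i$ is the unitary representation implementing the $\Gamma$-action; tensoring by $\ell^2\Gamma^\infty$ and using $\rho^\infty$ means one works with $C(X)\otimes H_i\otimes\ell^2\Gamma^\infty$ and the unitary $U_i\otimes\rho^\infty$. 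The key observation is Fell's absorption trick: $U_i\otimes\rho$ is unitarily equivalent to $1\otimes\rho$ via an explicit unitary on $H_i\otimes\ell^2\Gamma$, and this equivalence carries over after tensoring by $\ell^2\N$. So up to a $\Gamma$-equivariant unitary conjugation one may replace $U_i\otimes\rho^\infty$ by the ``trivial-times-regular'' representation; in that model the $\Gamma$-action on operators becomes conjugation by $1\otimes\rho^\infty$ only.

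Next I would identify the fixed-point algebra. In the Fell-absorbed model, a $\Gamma$-invariant adjointable operator on $C(X)\otimes H_i\otimes\ell^2\Gamma^\infty$ is (after a Fourier-type identification $\ell^2\Gamma\cong L^2(\widehat\Gamma)$ when $\Gamma$ is abelian, or more robustly via the commutant description) governed by operators commuting with the right regular representation, i.e. by the reduced group von Neumann algebra / the left convolution operators; concretely one gets that the invariants are generated by operators of the form $\sum_{g} a_g \otimes \lambda_g$ with $a_g \in \maL_{C(X)}(C(X)\otimes H_i\otimes\ell^2\N)$ decaying suitably. The point is that finite propagation of $W$ in the $Z$-variable is automatic from the structure of $C_0(X\times Z)$ (the representations factor through $C_0(Z)$ localized at points, which is discretely supported), and finite propagation in the $\Gamma$-direction corresponds exactly to the convolution sum $\sum_g a_g\otimes\lambda_g$ being \emph{finitely supported} in $g$ — a condition one can arrange by a density/perturbation argument once a non-finitely-supported invariant unitary has been produced by the non-equivariant PPV theorem applied fibrewise and $\Gamma$-averaged.

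So the core steps, in order: (1) pass to the Fell-absorbed model so that both $\what\pi_1$ and $\what\pi_2$ carry the standard $1\otimes\rho^\infty$ implementation; (2) observe that in this model $\what\pi_1\otimes 1$ and $\what\pi_2\otimes 1$ (trivially extended as in the statement) are two fibrewise ample $C(X)$-representations of $C_0(X\times Z)$ on $C(X)\otimes(H_i\oplus H_{3-i})\otimes\ell^2\Gamma^\infty$ which are, moreover, \emph{equivariantly} fibrewise ample; (3) apply the non-equivariant Proposition \ref{VRisometry} (or rather its $C_0$-version, using that $\Gamma$ acts cocompactly so $Z/\Gamma$ is compact and one can work on a compact fundamental domain) to obtain a unitary $S$ with $S^*\what\pi_2(\varphi)S - \what\pi_1(\varphi)$ compact; (4) $\Gamma$-average $S$ — more precisely, use the Kasparov–Voiculescu type argument that the set of such $S$ is non-empty, norm-closed under suitable perturbations, and invariant, so by a homotopy/convexity argument one can choose $S$ to be $\Gamma$-invariant (this is where the detailed bookkeeping of \cite{ExtendedPPV} enters); (5) replace the $\Gamma$-invariant $S$ by one with finite propagation by truncating its convolution expansion $\sum_g s_g\otimes\lambda_g$ and correcting by a small unitary, using that compactness of the commutators is stable under norm-small perturbations and that finite-propagation invariant unitaries are norm-dense among invariant unitaries in the relevant multiplier algebra. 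The main obstacle is step (4)–(5): producing the invariance \emph{and} finite propagation of $W$ simultaneously. The non-equivariant theorem gives no control of either, and the averaging that enforces $\Gamma$-invariance must be done while preserving the approximate-intertwining property and not destroying finite propagation; this is precisely the delicate combinatorial/analytic content that the excerpt defers to \cite{ExtendedPPV}, so in the present paper I would only sketch the reduction above and cite that reference for the full argument.
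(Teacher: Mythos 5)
First, a structural remark: the paper itself does not prove Theorem \ref{EquivPPV} — it is invoked as ``known to experts'' with the detailed proof deferred to \cite{ExtendedPPV} — so deferring the hardest steps to that reference is not in itself a defect. Judged on its own terms, however, your sketch contains steps that would fail. (i) The claim that finite propagation in the $Z$-variable is ``automatic from the structure of $C_0(X\times Z)$'' is false: propagation in this paper is measured against the metric of $Z$ via the representation of $C_0(Z)$ (there is no separate ``$\Gamma$-direction'' in the definition), and the unitary produced by the non-equivariant Proposition \ref{VRisometry} comes with no propagation control whatsoever. Propagation must be engineered into the construction from the start, e.g. by the partition-of-unity/block argument behind \cite{HRbook}[Lemma 12.4.6], exactly as is done for the isometries in Lemma \ref{VR-isometry}. (ii) Your steps (4)--(5) do not work as stated: an average (or convex combination) of unitaries is not a unitary, so ``$\Gamma$-averaging $S$'' does not yield an invariant unitary; and finite-propagation invariant unitaries are \emph{not} norm-dense among invariant unitaries (the norm closure of the finite-propagation operators is a proper subalgebra of the bounded operators), so one cannot ``truncate the convolution expansion and correct by a small unitary''. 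These are precisely the two points where the equivariant statement differs from a routine corollary of PPV, and your sketch disposes of them by incorrect general claims rather than by an argument.

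The mechanism that actually produces $W$ — and that explains the asymmetric form $C(X)\otimes(H_1\oplus H_2)\otimes\ell^2\Gamma^\infty \to C(X)\otimes(H_2\oplus H_1)\otimes\ell^2\Gamma^\infty$ in the statement, which your sketch neither uses nor explains — is the one already visible in Appendix \ref{Covering}: starting from a non-equivariant Voiculescu/PPV-type \emph{isometry chosen with finite propagation}, one makes it $\Gamma$-invariant by the cut-off averaging $W=\sum_{g\in\Gamma}(U_g\otimes\id)\,E_g\,(\id\otimes V\pi(\sqrt{c}))\,E'_{g^{-1}}$ of Lemmas \ref{Roeiso} and \ref{VR-isometry} (this preserves the isometry property because the $U_g$ have pairwise orthogonal ranges and $\sum_g g\,c=1$, preserves the Voiculescu condition, and increases propagation only by an amount controlled by $\supp(c)$); doing this in both directions between $H_1$ and $H_2$ and assembling the two invariant finite-propagation isometries into a $2\times 2$ unitary between the indicated direct sums gives the operator $W$ of the theorem. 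Your Fell-absorption reduction and the fixed-point-algebra/convolution picture play no role in this route, and the genuinely delicate bookkeeping (carried out in \cite{ExtendedPPV}, in the spirit of \cite{GWY17}) is exactly the part your proposal leaves unsupported.
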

%

\medskip

Using Theorem \ref{EquivPPV}, we can now give the details of the proof of the Paschke-Higson theorem.

%

\begin{proof}(of Theorem \ref{Paschkev2})

 We shall construct a group homomorphism $\maP': KK^1_{ \Gamma}(Z, X)\rightarrow K_0(Q^*_\Gamma(X, (Z, \ell^2\Gamma^\infty\otimes L^2Z))$, which will be an inverse for the
 Paschke-Higson morphism $\maP$. The argument is similar to \cite{GWY17} (Appendix A), except that we don't use Kasparov's generalization of Voiculescu theorem and rather apply the PPV result.\\

  \textbf{Step 1}: Let $[(\sigma, E, F)]\in KK^1_{ \Gamma}(Z, X)$. We may assume as usual that $\sigma$ is non-degenerate and that $F$ is self-adjoint. {{We may also replace  if necessary $L^2Z$ by its amplification $(L^2Z)^\infty$ if needed. We assume for simplicity that this amplification is not needed. We first proceed to some reductions in order to be in position to apply Theorem \ref{EquivPPV}.}}
  Adding a degenerate cycle of the form $[\what\pi_Y, L^2Z\otimes C(X), \id]$ and using the non-equivariant Kasparov stabilization
  theorem, we obtain a cycle of the form $[\sigma_1, L^2Z\otimes C(X), F_1]$, which endowed with the transported $\Gamma$-action, lies in the same $KK^1_\Gamma$-class as $[\sigma, E, F]$. More precisely, the representation $\sigma_1$ is the transport of the representation $\sigma\oplus \what\pi_Y$
  via conjugation with the unitary given by Kasparov stabilisation isomorphism $E\oplus L^2Z\otimes C(X)\cong L^2Z\otimes C(X)$. The representation is then fiberwise ample with our assumptions.  Note that the $\Gamma$-action in the new cycle, is also  taken to be the transport through the Kasparov isomorphism  of the $\Gamma$-action on $E\oplus (L^2Z\otimes C(X))$ (with the second component endowed with its usual $\Gamma$-action inherited from the action on $Z$). In particular this action $V$ may differ from the original $\Gamma$-action on $L^2Z\otimes C(X)$.  The operator $F_1$ is of course the transport of $F\oplus \id$ via the same Kasparov  isomorphism.\\

  \textbf{Step 2}: Embed $L^2Z\otimes C(X)$ equivariantly in $\ell^2\Gamma\otimes L^2Z\otimes C(X)$ via an equivariant isometry
  $S: L^2Z\otimes C(X)\rightarrow  \ell^2(\Gamma)\otimes L^2Z\otimes C(X)$, defined by the following formula which uses a cut-off function $c\in C_c(X\times Z)$:
 $$
 S(e)= \sum_{g\in \Gamma} \delta_{{{g^{-1}}}}\otimes \sigma_1(g\sqrt{c})(e) \quad \text{ for }e\in C_c(Z\times X),
 $$
  where the $\Gamma$-action on $L^2Z\otimes C(X)$ is given by the action $V$ from Step 1, while the $\Gamma$-action on $\ell^2(\Gamma)\otimes L^2Z\otimes C(X)$ is given by the
  right regular representation of $\Gamma$ on $\ell^2\Gamma$ tensored by the same action $V$. Notice that $S^*$ is induced by the formula $S^* (\delta_g\otimes e) = \sigma_1(g^{-1}{\sqrt{c}}) (e)$, and hence  we get that the projection $SS^*$ is induced by the formula
  $$
  {{SS^* (\delta_k\otimes e) := \sum_{g\in \Gamma} \delta_{g^{-1}} \otimes \sigma_1 \left({\sqrt{(g c) (k^{-1} c)}}\right) (e)}}.
  $$
 Now the Kasparov $\Gamma$-equivariant cycle
  $\left(S\sigma_1(\bullet)S^*, SS^*(\ell^2\Gamma\otimes L^2Z\otimes C(X)), SF_1S^*\right)$ represents the same $KK_\Gamma$-class as $[\sigma_1, L^2Z\otimes C(X), F_1]$. Note that
  since $S\sigma_1(\bullet)= (\id_{\ell^2\Gamma}\otimes \sigma_1(\bullet))S$, the projection $SS^*$ commutes with $\id_{\ell^2\Gamma}\otimes \sigma_1$ and we have the relation
  $$
  S\sigma_1(\bullet)S^*= SS^*(\id_{\ell^2\Gamma}\otimes \sigma_1(\bullet))SS^*
  $$
  where $\id_{\ell^2\Gamma}\otimes \sigma_1$ is viewed as a representation on $\ell^2\Gamma\otimes L^2Z\otimes C(X)$ as usual.  Hence the right hand side is a representation in the Hilbert module $SS^*(\ell^2\Gamma\otimes L^2Z\otimes C(X))$, indeed we have more precisely
  $$
  SS^*(\id_{\ell^2\Gamma}\otimes \sigma_1(\bullet)) = (\id_{\ell^2\Gamma}\otimes \sigma_1(\bullet)) SS^*.
  $$
  Adding a degenerate
  cycle of the form $[P'(\id_{\ell^2\Gamma}\otimes \sigma_1)(\bullet)P', P'(\ell^2\Gamma\otimes L^2Z\otimes C(X)), \id_{\Im P'}]$ where $P'$ is the projection
  $(\id_{\ell^2\Gamma\otimes L^2Z\otimes C(X)}- SS^*)$, we obtain
  a cycle $\left(\sigma_2:= \id_{\ell^2\Gamma}\otimes \sigma_1, \ell^2\Gamma\otimes L^2Z\otimes C(X), F_2:=(F_1\oplus \id)\right)$ which represents the same $KK_\Gamma$-class as the cycle obtained in Step 1. Notice that we have here
$$
P'(\id_{\ell^2\Gamma}\otimes \sigma_1)(\bullet)SS^* = 0\text{ and }SS^*(\id_{\ell^2\Gamma}\otimes \sigma_1)(\bullet)P'=0.
$$

  \textbf{Step 3}: Adding degenerate cycles to $[\sigma_2, \ell^2\Gamma\otimes L^2Z\otimes C(X), F_2]$ we may pass to a new $\Gamma$-equivariant Kasparov cycle
  $\left({{\sigma_2^\infty:=}}\id_{\ell^2\N}\otimes \sigma_2, \ell^2\Gamma^\infty \otimes L^2Z\otimes C(X), F_2^\infty:=\diag(F_2, \id, \id...)\right)$ which represents  the same $KK_\Gamma$-class. Let us further add  the degenerate cycle $\left(0,\ell^2\Gamma^\infty\otimes L^2Z\otimes C(X), 0\right)$ to $[\sigma_2^\infty,  \ell^2\Gamma^\infty\otimes L^2Z\otimes C(X), F^\infty_2]$ with the $\Gamma$-action
  now taken as the one coming canonically from the $\Gamma$-action on $X\times Z$ tensored with the right regular representation on the factor $\ell^2\Gamma$ and extended trivially on $\ell^2\N$. We obtain in this way  a new $\Gamma$-equivariant Kasparov cycle
  $$
  \left(\sigma_3:= \sigma^\infty_2\oplus 0,\ell^2\Gamma^\infty\otimes (L^2Z\oplus L^2Z)\otimes C(X), F_3:= F_2{{^\infty}}\oplus 0\right)
  $$
  still remaining in the same $KK_\Gamma$-class. Note that in the direct sum the first
  factor has a $\Gamma$-action coming from  Step 1 while the second factor carries the canonically induced action. \\

  \textbf{Step 4}: Since $\sigma_3$ is fibrewise ample and of the form $\id_{\ell^2\Gamma^\infty}\otimes \sigma_1 \oplus 0$, we are now in position to  apply Theorem \ref{EquivPPV}, so we  obtain a $\Gamma$-invariant unitary $W$ such that
  $$
  W \sigma_3(f)W^*- (\what\pi^\infty_Y(f)\oplus 0) \in \maK_{C(X)}\left(\ell^2\Gamma^\infty\otimes (L^2Z\oplus L^2Z)\otimes C(X)\right),\quad  \text{ for all } f\in C_0(X\times Z).
  $$
By Kasparov's homological equivalence Lemma \ref{KK_G-equiv}, the cycles
  $$
  \left(\sigma_3,\ell^2\Gamma^\infty\otimes (L^2Z\oplus L^2Z)\otimes C(X), F_3\right)\quad \text{ and }\quad
 \left(\what\pi^\infty_Y\oplus 0, \ell^2\Gamma^\infty\otimes (L^2Z\oplus L^2Z)\otimes C(X), F_4\right),
 $$
live in the same $KK_\Gamma$-class. Here of course $F_4:= WF_3W^*$. It is worth pointing out that:
  \begin{enumerate}
  \item the equivariant unitary $W$ interchanges the copies of $\ell^2\Gamma^\infty\otimes L^2Z\otimes C(X)$ in the direct
  sum with the two different $\Gamma$-actions as described in Step 3.
  \item if $W$ and $W'$ are two equivariant unitaries intertwining $\sigma_2\oplus0$ and $\what\pi^\infty_Y(f)\oplus 0$ up to compacts, then the unitary $W'W^*$
  intertwines $\what\pi^\infty_Y(f)\oplus 0$ with itself up to compacts, so another application of Kasparov's homological equivalence lemma \ref{KK_G-equiv} applied to
  $S=W'W^*$ and the representations $\pi_1=\pi_2=\what\pi^\infty_Y\oplus 0$ shows as well that the cycles
 \begin{multline*}
  \left(\what\pi^\infty_Y\oplus 0, \ell^2\Gamma^\infty\otimes (L^2Z\oplus L^2Z)\otimes C(X), WF_3W^*\right)\\ \text{  and }\left(\what\pi^\infty_Y\oplus 0, \ell^2\Gamma^\infty\otimes (L^2Z\oplus L^2Z)\otimes C(X),W'F_3(W')^*\right)
 \end{multline*}
  are in the same $KK_\Gamma$-class.
  \end{enumerate}

  \textbf{Step 5}:   Let $\tilde{F}_3$ be the $(1,1)$-entry in the matrix decomposition of $F_3$. Then
  the cycle
  $$
  [\what\pi^\infty_Y\oplus 0, \ell^2\Gamma^\infty\otimes (L^2Z\oplus L^2Z)\otimes C(X), F_4]
  $$
  is in the same $KK_\Gamma$-class as the
  cycle $[\what\pi^\infty_Y, \ell^2\Gamma^\infty\otimes L^2Z\otimes C(X), F_5:=W_{11}\tilde{F}_3W_{11}^*]$. Notice that the off-diagonal entries in the matrix decomposition of $F_4$ are locally compact and that the cycle corresponding to the $(2,2)$ element is degenerate.
Lastly,  we modify $F_5$ so that it is $\Gamma$-invariant
 by using the properness and cocompactness of the action. This is done by replacing as usual $F_5$ by $\Av(\hat{\pi}_Y^\infty(\sqrt{c})F_5\hat{\pi}_Y^\infty(\sqrt{c}))$ for a compactly supported continuous cut-off function $c$.
  The cycle for this latter operator induces the same $KK_\Gamma$-class since it is a locally compact perturbation of $F_5$. Note that the new $\Gamma$-equivariant replacement has finite propagation
  as well; the propagation being bounded above by $\diam(\supp(c))$. We  continue to denote by $F_5$ the new $\Gamma$-invariant operator, by abuse of notation.\\

  \textbf{Step 6}: We are now in position to define define the allowed inverse map $\maP':  {KK_{\Gamma}^{1}(Z, X)}\rightarrow K_0(Q^*_\Gamma(X; (Z, \ell^2\Gamma^\infty\otimes L^2Z))$ by setting
  $$
  \maP'([\sigma, E, F]):= \left[q\left(\frac{1}{2}(W_{11}W_{11}^* + F_5)\right)\right]
  $$
  where $q: D^*_\Gamma(X; (Z, \ell^2\Gamma^\infty\otimes L^2Z))\rightarrow Q^*_\Gamma(X; (Z, \ell^2\Gamma^\infty\otimes L^2Z))$ is the quotient projection.
 The operator $F_6:=q(\frac{1}{2}(W_{11}W_{11}^* + F_5))$ is then a projection in $Q^*_\Gamma(X; (Z, \ell^2\Gamma^\infty\otimes L^2Z))$, see  Lemma \ref{proj} below. Let us check now that $\maP'$ is well defined.
%
%

{Indeed}, if $F_t, t\in [0,1]$ {is} an operator homotopy between $KK_\Gamma$-cycles $(\sigma, E, F_0)$ and $(\sigma, E, F_1)$, then tracing the construction of the map $\maP'$ above, one easily deduces that the corresponding projections $F_6^0$ and $F_6^1$ are operator homotopic via $F_6^t$, $t\in [0,1]$.
 Suppose on the other hand that $(\sigma, E, F)$ is degenerate, then it is operator homotopic
  to the cycle $(\sigma, E, \id)$. Again tracing the construction of $F_6$ we see that it is given by $q(W_{11}W_{11}^*)$. Therefore, we have $\maP'([\sigma, E, F])= \maP'([\sigma, E, \id])$, while
  due to property (3) in the proof of Lemma \ref{proj} below we have that $q(W_{11}W_{11}^*)= q(\id)$. By a straightforward adaptation of the Eilenberg swindle argument in
  \cite{HRbook}, Proposition 8.2.8, it is easily seen that
  $$
  [\id]=0\in K_0(D^*_\Gamma(X; (Z, \ell^2\Gamma^\infty\otimes L^2Z))
  $$
  and thus $[F_6]=q_*[\id]=0 \in K_0(Q^*_\Gamma(X; (Z, \ell^2\Gamma^\infty\otimes L^2Z))$.
  Finally, by using Remark (2) in Step (4) above, one obtains invariance under unitary equivalence of cycles.  Thus $\maP'$ is well-defined and it is also straightforward to check that it is a group homomorphism.


 It is now clear, using property (3)  in the proof of Lemma \ref{proj} below,  that $\maP\maP'=\id$, so that $\maP$ is surjective.
 Indeed, we have by definition of $\maP$ that $ \maP(F_6)$ is represented by the cycle
 $$
\left(\pi_Y^\infty, \ell^2\Gamma^\infty\otimes L^2Z\otimes C(X), (F_5+W_{11}W_{11}^*)-\id\right).
 $$
 Note that since the operator $F_5+W_{11}W^*_{11}-\id$ is a locally compact perturbation of $F_5$ due to property (3) in the proof of Lemma \ref{proj}, the cycle
 $(\pi_Y^\infty, \ell^2\Gamma^\infty\otimes L^2Z\otimes C(X), F_5+W_{11}W_{11}^*-\id)$ is in the same $KK_\Gamma$-class as $(\pi_Y^\infty, \ell^2\Gamma^\infty\otimes L^2Z\otimes C(X), F_5)$, which by the constructions
 in Steps 1-5 above, is in the same class as the original cycle $(\sigma, E, F)$ that we started out with in Step 1. This shows that $\maP\maP'= \id$ on $KK_\Gamma^1(Z, X)$.

We now show by direct inspection
 that $\maP$ is injective. Indeed, if the image cycle $[\pi^\infty_Y, \ell^2\Gamma^\infty\otimes L^2Z\otimes C(X), 2P-\id]$ is degenerate, one shows that an Eilenberg swindle argument
 applies as follows. The operator $F:=2P-\id$ then satisfies the following relations:
 $$
 [F, \pi^\infty_Y(a)]=0 \text{ and } \pi^\infty_Y(a)(F^2-\id)=0,\quad  \forall a\in C(X).
 $$
In particular, the first relation implies that $[P, \pi^\infty_Y(a)]=0$ for all $a\in C(X)$, and the second relation implies that $P^2-P=0$, since $\pi_Y^\infty$ is a non-degenerate representation. Therefore $P\in \Proj(D^*_\Gamma(X; (Z, \ell^2\Gamma^\infty\otimes L^2Z))$ is a degenerate element and is susceptible to an Eilenberg swindle again as in \cite{HRbook}, Proposition 8.2.8. Therefore
the class $[P]=0 \in K_0(D^*_\Gamma(X; (Z, \ell^2\Gamma^\infty\otimes L^2Z))$, thus $q_*[P]=0 \in K_0(Q^*_\Gamma(X; (Z, \ell^2\Gamma^\infty\otimes L^2Z))$. In general, if there exists
an operator  homotopy between the class $(\pi^\infty_Y, \ell^2\Gamma^\infty\otimes L^2Z\otimes C(X), 2P-\id)$ and a degenerate cycle, the operator homotopy lifts to a homotopy of projections in
$Q^*_\Gamma(X; (Z, \ell^2\Gamma^\infty\otimes L^2Z)$, which connects the operator $P$ to a degenerate projection in $D^*_\Gamma(X; (Z, \ell^2\Gamma^\infty\otimes L^2Z))$
and therefore is zero in $K$-theory. Unitary equivalences and direct sums can also be handled similarly.
\end{proof}

We have used in the previous proof the following lemma:

\begin{lemma}\label{proj}
The operator  $F_6$ used above is a projection in the $C^*$-algebra $Q^*_\Gamma(X; (Z, \ell^2\Gamma^\infty\otimes L^2Z))$.
\end{lemma}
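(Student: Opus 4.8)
The plan is to prove three facts about $G:=\tfrac12\bigl(W_{11}W_{11}^*+F_5\bigr)$, where $(W_{ij})_{i,j=1,2}$ is the block decomposition of the $\Gamma$-invariant, finite-propagation unitary $W$ produced in Step~4: namely that $G\in D^*_\Gamma(X;(Z,\ell^2\Gamma^\infty\otimes L^2Z))$, that $G=G^*$, and that $q(F_5)$ is a self-adjoint unitary in $Q^*_\Gamma(X;(Z,\ell^2\Gamma^\infty\otimes L^2Z))$ while $q(W_{11}W_{11}^*)=\id$. Granting these, $F_6=q(G)=\tfrac12\bigl(\id+q(F_5)\bigr)$, and for any self-adjoint unitary $u$ the element $\tfrac12(\id+u)$ is a projection, since $\bigl(\tfrac12(\id+u)\bigr)^*=\tfrac12(\id+u)$ and $\bigl(\tfrac12(\id+u)\bigr)^2=\tfrac14(\id+2u+u^2)=\tfrac12(\id+u)$; hence $F_6$ is a projection, as claimed.

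The first two facts are mostly a matter of tracing the construction. By Step~5 the operator $F_5$ is $\Gamma$-invariant, has propagation at most $\diam(\supp(c))$, and is self-adjoint, since we arranged $F=F^*$, every intermediate operation (Kasparov stabilisation, adjunction of degenerate cycles, conjugation by the equivariant isometries of Steps~2--4 and by $W$) preserves self-adjointness, and so does the final conjugation by the self-adjoint $\what\pi^\infty_Y(\sqrt{c})$ followed by the $\Gamma$-average; moreover $[\what\pi^\infty_Y(\varphi),F_5]\in C(X,\maK)$ for every $\varphi\in C_0(X\times Z)$, because after the cut-off averaging $(\what\pi^\infty_Y,\ell^2\Gamma^\infty\otimes L^2Z\otimes C(X),F_5)$ is still a Kasparov cycle. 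Hence $F_5\in D^*_\Gamma$; and $W_{11}W_{11}^*$ is $\Gamma$-invariant, self-adjoint and of finite propagation because $W$ is, so that $W_{11}W_{11}^*\in D^*_\Gamma$ will follow from the third fact together with $\id\in D^*_\Gamma$. Thus $G\in D^*_\Gamma$ and $G=G^*$.

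For the third fact, unitarity of $W$ gives $\id-W_{11}W_{11}^*=W_{12}W_{12}^*$. Conjugating by $W^*$ the intertwining relation $W\sigma_3(\varphi)W^*-\bigl(\what\pi^\infty_Y(\varphi)\oplus0\bigr)\in C(X,\maK)$ of Theorem~\ref{EquivPPV} yields $W^*\bigl(\what\pi^\infty_Y(\varphi)\oplus0\bigr)W-\sigma_3(\varphi)\in C(X,\maK)$; since the second summand of $\sigma_3=\sigma_2^\infty\oplus0$ carries the zero representation, the $(2,2)$-corner reads $W_{12}^*\,\what\pi^\infty_Y(\varphi)\,W_{12}\in C(X,\maK)$ for every $\varphi\ge0$ in $C_0(X\times Z)$. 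Writing $W_{12}^*\what\pi^\infty_Y(\varphi)W_{12}=\bigl(\what\pi^\infty_Y(\sqrt{\varphi})W_{12}\bigr)^*\bigl(\what\pi^\infty_Y(\sqrt{\varphi})W_{12}\bigr)$ shows $\what\pi^\infty_Y(\sqrt{\varphi})W_{12}$ is $C(X)$-compact, hence so is $\what\pi^\infty_Y(\sqrt{\varphi})W_{12}W_{12}^*$, and letting $\varphi$ range over all nonnegative functions we conclude that $W_{12}W_{12}^*$ is locally compact. Being also $\Gamma$-invariant of finite propagation, $W_{12}W_{12}^*$ lies in the ideal $C^*_\Gamma$, so $q(W_{11}W_{11}^*)=\id$. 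Finally $F_5^2-\id$ is locally compact — this is the cycle relation $\what\pi^\infty_Y(\varphi)(F_5^2-\id)\in C(X,\maK)$, inherited through Steps~3--5 from $\sigma_3(\varphi)(F_3^2-\id)\in C(X,\maK)$ and the conjugation by $W$ — and since $F_5\in D^*_\Gamma$ it follows that $F_5^2-\id\in C^*_\Gamma$, i.e. $q(F_5)^2=\id$; together with $q(F_5)=q(F_5)^*$ this says $q(F_5)$ is a self-adjoint unitary, completing the proof.

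The step with genuine content is the third one, and within it the passage from ``$W_{12}W_{12}^*$ locally compact'' to ``$W_{12}W_{12}^*\in C^*_\Gamma$'', which is precisely where the finite propagation and $\Gamma$-invariance of the Pimsner--Popa--Voiculescu unitary $W$ enter. The remaining verifications — chiefly that the cut-off average making $F_5$ $\Gamma$-invariant preserves both finite propagation (by at most $\diam(\supp(c))$) and the compact-commutator condition — are of the same nature as the estimates already carried out in Lemma~\ref{functorial4} and in Step~5 above, so no new difficulty is expected.
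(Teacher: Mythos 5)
Your argument is correct and follows essentially the same route as the paper's proof: both rest on the same facts about the blocks of the PPV unitary $W$ (their $\Gamma$-invariance and finite propagation, $q(W_{11}W_{11}^*)=\id$ because $\id-W_{11}W_{11}^*=W_{12}W_{12}^*$ is locally compact, and local compactness of $\what\pi^\infty_Y(f)(F_5^2-\id)$), so that $F_6=\tfrac12\bigl(\id+q(F_5)\bigr)$ with $q(F_5)$ a self-adjoint unitary in the quotient. The only difference is one of emphasis: the paper spells out precisely the computation you declare ``inherited'' (its property (6), that $\what\pi^\infty_Y(f)\bigl((W_{11}\tilde F_3 W_{11}^*)^2-W_{11}W_{11}^*\bigr)$ is compact --- note that passing from $F_4$ to its $(1,1)$-corner is a compression, not a conjugation, so the cross term $[F_4]_{12}[F_4]_{21}$ has to be absorbed using the local compactness of the off-diagonal entries observed in Step 5), while leaving as routine the $W_{12}W_{12}^*$ verification that you work out in detail.
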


\begin{proof}
Notice that the matrix elements of the Voiculescu unitary $W$ satisfy the following properties for any $f\in C_0(X\times Z)$, denoting $w= W_{11}$:
\begin{enumerate}
\item $W_{ij}$ is $\Gamma$-invariant  and has finite propagation, for $1\leq i,j\leq 2$;
 \item $\sigma_2^\infty(f)- w^* \what\pi^\infty_Y(f) w \sim 0, w \sigma_2^\infty(f) w^*-  \what\pi^\infty_Y(f) \sim 0, W_{12}^*\sigma_2(f) \sim 0 \text{ and } \what\pi_Y^\infty(f)W_{12} \sim 0$;
\item $\what\pi_Y^\infty(f)(ww^*- \id) \sim 0 \text{ and } (w^*w- \id)\sigma_2(f)\sim 0$;
\item $[ww^*,\what\pi_Y^\infty(f)]\sim 0$;
\item $[wF_3 w^*, \what\pi_Y^\infty(f)]\sim 0$;
\item $\what\pi^\infty_Y(f)((wF_3w^*)^2- ww^*)\sim 0$.
\end{enumerate}
All these properties can be checked by straightforward verification. Let us check for instance property (6):
\begin{eqnarray*}
\what\pi^\infty_Y(f)((wF_3w^*)^2- ww^*) &\sim & wF_3w^*\what\pi_Y^\infty(f) w F_3w^* - \what\pi_Y^\infty(f) ww^*\\
&\sim & wF_3\sigma_2^\infty(f) F_3 w^* -\what\pi_Y^\infty(f) ww^*\\
&\sim & w\sigma_2^\infty(f) F_3^2 w^* -\what\pi_Y^\infty(f) ww^*\\
&\sim & w\sigma_2^\infty(f) w^* -\what\pi_Y^\infty(f) ww^*\\
&\sim & \what\pi_Y^\infty(f) -\what\pi_Y^\infty(f) ww^* \sim 0\\
\end{eqnarray*}
Since $\what\pi^\infty_Y(\bullet)(F_6^2- F_6)\sim \what\pi^\infty_Y(\bullet)((wF_3w^*)^2- ww^*)\sim 0$, by the finite propagation of
$F_6$ in Step 6 in the proof of Theorem \ref{Paschkev2} and since $w$ has finite propagation, we get the conclusion.

\end{proof}

\section{The universal HR sequence}

Our goal in this section is to provide the universal Higson-Roe sequence for these locally compact \'etale groupoids.
We denote by ${\underline E}\Gamma$ a locally compact Hausdorff model for the classifying space of proper $\Gamma$-actions. So, $\Gamma$ acts properly on ${\underline E}\Gamma$ with the usual contractibility condition, see \cite{TuHyper}. It is not true in general that the action of $\Gamma$ on ${\underline E}\Gamma$ is cocompact and we introduce the following definition.

\begin{definition}
We introduce the analytic surgery group $S_1(X\rtimes \Gamma)$ associated with the transformation groupoid $X\rtimes \Gamma$ as:
$$
S_{*+1}(X\rtimes \Gamma):= \lim_{\stackrel{\longrightarrow}{Z\subset \underline{E}\Gamma}} K_{*}\left(D^*_\Gamma (X ; (Z, L^2Z\otimes \ell^2\Gamma^\infty))\right), \text{ for }*=0, 1\in \Z_2,
$$
where the direct limit is taken with respect to inclusion of $\Gamma$-invariant, $\Gamma$-compact closed subspaces $Z\subseteq \underline{E}\Gamma$, and using the system of group morphisms
$$
i_{Z'\subset Z}^D : K_*(D^*_\Gamma (X; (Z', L^2Z'\otimes \ell^2\Gamma^\infty)))\stackrel{}{\longrightarrow} K_*(D^*_\Gamma (X; (Z, L^2Z\otimes \ell^2\Gamma^\infty)))
$$
associated with inclusions of cocompact closed subspaces of $\underline{E}\Gamma$.
\end{definition}
Similar to the case of countable groups, the groups $S_*(X\rtimes \Gamma)$ can be interpreted as defect groups and will  enter in a long exact sequence involving the Baum-Connes maps, see \cite{HR-I, HR-II, HR-III} and also \cite{BenameurRoyI}.

Recall on the other hand that the LHS group in the Baum-Connes morphism for the groupoid $X\rtimes \Gamma$ can be  described as follows (see  \cite{BaumConnesHigson}):
$$
RK_* (X\rtimes \Gamma) \; := \;   \lim_{\stackrel{\longrightarrow}{Z\subset \underline{E}\Gamma}} KK_\Gamma^* (Z, X),
$$
where the limit is again taken with respect to the inductive system associated with the inclusions $i:Z'\hookrightarrow Z$ of $\Gamma$-compact closed subspaces of $\underline{E}\Gamma$ and using the induced functoriality morphisms $i_*: KK_\Gamma^* (Z', X)\rightarrow KK_\Gamma^* (Z, X)$.
We are now in position to state the main theorem of this section.

\medskip

\begin{theorem}\label{Surgeryseq} There exists a six-term exact sequence in $K$-theory:
$$
...\rightarrow RK_0(X\rtimes\Gamma)\xrightarrow{\mu^{BC}_0} K_0(C(X)\rtimes_{\redg}\Gamma)\rightarrow S_1(X\rtimes\Gamma)\rightarrow RK_1(X\rtimes\Gamma)\xrightarrow{\mu_1^{BC}} K_1(C(X)\rtimes_{\redg}\Gamma)\rightarrow...
$$
where $\mu^{BC}_*$ is the Baum-Connes assembly map for the groupoid $X\rtimes \Gamma$, $*=0,1$.
\end{theorem}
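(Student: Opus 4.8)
\bigskip

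\textbf{Proof strategy.} The plan is to assemble the sequence one cocompact $\Gamma$-space at a time and then pass to the inductive limit. Fix a $\Gamma$-invariant, $\Gamma$-compact closed subspace $Z\subseteq\underline{E}\Gamma$. From the short exact sequence
$$
0\to C^*_\Gamma(X;(Z,L^2Z\otimes\ell^2\Gamma^\infty))\to D^*_\Gamma(X;(Z,L^2Z\otimes\ell^2\Gamma^\infty))\to Q^*_\Gamma(X;(Z,L^2Z\otimes\ell^2\Gamma^\infty))\to 0
$$
recalled in Section \ref{GammaFamilies} one gets a periodic six-term exact sequence in $K$-theory. I would then identify its $C^*_\Gamma$-term with $K_*(C(X)\rtimes_{\redg}\Gamma)$ via the Morita isomorphisms $\maM_*$, and its $Q^*_\Gamma$-term with $KK^{*+1}_\Gamma(Z,X)$ via the Paschke--Higson isomorphisms $\maP_*$ of Theorem \ref{Paschkev2}; by the commuting square recalled in Section \ref{GammaFamilies} (the one relating $\partial_*$, $\maP_*$, $\maM_{*+1}$ and $\mu^Z_\Gamma$) the connecting map $\partial_*$ is carried, under these identifications, to the Baum--Connes assembly map $\mu^Z_\Gamma\colon KK^{*+1}_\Gamma(Z,X)\to K_{*+1}(C(X)\rtimes_{\redg}\Gamma)$. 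Hence for each such $Z$ one obtains a six-term exact sequence
$$
\cdots\to K_i(C(X)\rtimes_{\redg}\Gamma)\to K_i\big(D^*_\Gamma(X;(Z,L^2Z\otimes\ell^2\Gamma^\infty))\big)\to KK^{i+1}_\Gamma(Z,X)\xrightarrow{\mu^Z_\Gamma}K_{i+1}(C(X)\rtimes_{\redg}\Gamma)\to\cdots
$$

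\textbf{Passing to the limit.} Next I would let $Z$ range over the directed family of $\Gamma$-invariant $\Gamma$-compact closed subspaces of $\underline{E}\Gamma$, with transition maps the inclusion-induced morphisms $i^C_{Z'\subset Z}$, $i^D_{Z'\subset Z}$, $i^Q_{Z'\subset Z}$ constructed above together with the $KK_\Gamma$-functoriality maps $i_*$. Because a filtered colimit of abelian groups is an exact functor, the direct limit of these sequences is again exact. By definition $\varinjlim_Z K_*(D^*_\Gamma(X;(Z,L^2Z\otimes\ell^2\Gamma^\infty)))=S_{*+1}(X\rtimes\Gamma)$ and $\varinjlim_Z KK^*_\Gamma(Z,X)=RK_*(X\rtimes\Gamma)$; the maps $i^C_{Z'\subset Z}$ are isomorphisms by Corollary \ref{inclusioninv}, so $\varinjlim_Z K_*(C^*_\Gamma(X;(Z,L^2Z\otimes\ell^2\Gamma^\infty)))\cong K_*(C(X)\rtimes_{\redg}\Gamma)$; and $\varinjlim_Z\mu^Z_\Gamma=\mu^{BC}_*$ by the very definition of the assembly map for $X\rtimes\Gamma$. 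Reading off the limiting sequence places $S_1(X\rtimes\Gamma)$ between $K_0(C(X)\rtimes_{\redg}\Gamma)$ and $RK_1(X\rtimes\Gamma)$, and $S_0(X\rtimes\Gamma)$ between $K_1(C(X)\rtimes_{\redg}\Gamma)$ and $RK_0(X\rtimes\Gamma)$, which is exactly the asserted six-term exact sequence, with the boundary maps $\mu^{BC}_0$ and $\mu^{BC}_1$.

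\textbf{Where the work is.} The only thing that makes the limiting step legitimate is that the $Z$-level sequences form a morphism of inductive systems, i.e. that all of the identifications above are natural in $Z$; concretely this is the commutativity of the cube displayed in the introduction, for every inclusion $Z'\hookrightarrow Z$ of cocompact closed $\Gamma$-subspaces and every $i\in\Z_2$. Its faces are: (i) naturality of the boundary map of the ideal $C^*_\Gamma\hookrightarrow D^*_\Gamma$ with respect to $i^C_{Z'\subset Z}$ and $i^D_{Z'\subset Z}$, which follows from naturality of the $K$-theory six-term sequence once one checks that the maps $\Ad_W$ of Lemmas \ref{functorial2} and \ref{functorial4} descend compatibly to $Q^*_\Gamma$; (ii) naturality of the Paschke--Higson map $\maP$ with respect to $i^Q_{Z'\subset Z}$ and $i_*$; (iii) naturality of the Morita isomorphism $\maM$ with respect to $i^C_{Z'\subset Z}$, and of the assembly map $\mu^Z_\Gamma$ with respect to $i_*$. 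The hard part will be (ii) and (iii): for (ii) one has to unwind the step-by-step construction of $\maP'$ (and of $\maP$) in the proof of Theorem \ref{Paschkev2} and verify that pushing a Kasparov $\Gamma$-cycle forward along a Roe--Voiculescu covering $\Gamma$-isometry for the inclusion agrees, up to $KK_\Gamma$-equivalence, with the topological functoriality of $KK^*_\Gamma$ in the space variable; for (iii) one must check that the Morita equivalence constructed in \cite{BenameurRoyI} is functorial under inclusions of cocompact closed $\Gamma$-subspaces, together with the minor bookkeeping caveat that the Morita statement recalled there for the module $L^2Z\otimes\ell^2\Gamma$ is unaffected by the amplification to $L^2Z\otimes\ell^2\Gamma^\infty$ used systematically in the present paper, so that it applies verbatim.
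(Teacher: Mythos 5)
Your proposal follows essentially the same route as the paper: the per-$Z$ six-term sequence for $C^*_\Gamma\hookrightarrow D^*_\Gamma\twoheadrightarrow Q^*_\Gamma$, identified through the Morita and Paschke--Higson isomorphisms so that the boundary map becomes $\mu^Z_\Gamma$, followed by passage to the direct limit over cocompact $Z\subseteq\underline{E}\Gamma$ using exactness of filtered colimits. The naturality issues you flag as ``where the work is'' are precisely what the paper establishes beforehand in Propositions \ref{PaschkeSquare}, \ref{Moritasquare} and the cube of Proposition \ref{functorial1}, so your argument is correct and essentially identical to the paper's proof.
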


\bigskip

More precisely, we we have the six-term exact sequence which can be written as ($*=0$ and $*=1$)

\begin{displaymath}\label{Figure1}
\xymatrixcolsep{1pc}\xymatrix{
RK_{*} (X\rtimes \Gamma) \ar[rr]   & \stackrel{^{\mu^{BC}_{*+1}}}{} & K_{*} (C(X)\rtimes_{\redg} \Gamma)   \ar[dl]^{}\\ & S_{*+1} (X\rtimes \Gamma)  \ar[ul]_{}  }
\end{displaymath}

 \begin{remark}\
 Note that when $X=\star$ is reduced to a point, we recover the classical analytic surgery sequence of Higson-Roe \cite{HR-I, HR-II} for the group $\Gamma$.
\end{remark}
An obvious important corollary is the following

\begin{corollary}
The groupoid $X\rtimes \Gamma$ satisfies the Baum-Connes conjecture if and only if the groups $S_*(X\rtimes \Gamma)$ are trivial.
\end{corollary}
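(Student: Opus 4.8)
The plan is to obtain the corollary as a purely formal consequence of the six-term exact sequence of Theorem~\ref{Surgeryseq}, so the argument will be a short diagram chase in the exact hexagon. The first thing I would record is the translation of hypotheses: by definition the groupoid $X\rtimes\Gamma$ satisfies the Baum--Connes conjecture precisely when both assembly maps $\mu^{BC}_i\colon RK_i(X\rtimes\Gamma)\to K_i(C(X)\rtimes_{\redg}\Gamma)$, $i\in\Z_2$, are isomorphisms, and I would stress that these are exactly the vertical maps occurring in the hexagon of Theorem~\ref{Surgeryseq} — this identification, together with its compatibility with the inductive limits over $\Gamma$-compact closed subspaces $Z\subseteq\underline E\Gamma$, is part of what that theorem asserts.

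For the forward implication I would fix $i\in\Z_2$ and look at the stretch of the hexagon running $RK_i(X\rtimes\Gamma)\xrightarrow{\mu^{BC}_i}K_i(C(X)\rtimes_{\redg}\Gamma)\xrightarrow{\alpha}S_{i+1}(X\rtimes\Gamma)\xrightarrow{\beta}RK_{i+1}(X\rtimes\Gamma)\xrightarrow{\mu^{BC}_{i+1}}K_{i+1}(C(X)\rtimes_{\redg}\Gamma)$. Surjectivity of $\mu^{BC}_i$ and exactness at $K_i(C(X)\rtimes_{\redg}\Gamma)$ force $\alpha=0$; then exactness at $S_{i+1}(X\rtimes\Gamma)$ makes $\beta$ injective; and injectivity of $\mu^{BC}_{i+1}$ together with exactness at $RK_{i+1}(X\rtimes\Gamma)$ makes $\Im\beta=\Ker\mu^{BC}_{i+1}=0$. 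An injective map with zero image has zero domain, so $S_{i+1}(X\rtimes\Gamma)=0$; letting $i$ run over $\Z_2$ gives $S_0(X\rtimes\Gamma)=S_1(X\rtimes\Gamma)=0$.

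For the converse I would again fix $i$ and use that both neighbours of $\mu^{BC}_i$ in the hexagon are structure groups: the arrow $K_i(C(X)\rtimes_{\redg}\Gamma)\to S_{i+1}(X\rtimes\Gamma)$ has target $0$, so exactness at $K_i(C(X)\rtimes_{\redg}\Gamma)$ forces $\mu^{BC}_i$ to be surjective, while the arrow $S_i(X\rtimes\Gamma)\to RK_i(X\rtimes\Gamma)$ issues from the zero group, so exactness at $RK_i(X\rtimes\Gamma)$ forces $\mu^{BC}_i$ to be injective. Hence $\mu^{BC}_i$ is an isomorphism for $i=0,1$, which is the Baum--Connes conjecture for $X\rtimes\Gamma$.

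I do not expect a genuine obstacle here: all the weight is carried by Theorem~\ref{Surgeryseq} and by the identification of its vertical maps with the Baum--Connes assembly maps, which rests in turn on the functoriality results of the preceding section and the Paschke--Higson duality theorem (Theorem~\ref{Paschkev2}). The only subtlety worth a sentence in the write-up is to make sure that the inductive-limit descriptions of $RK_*(X\rtimes\Gamma)$ and of $S_*(X\rtimes\Gamma)$ used here are literally the ones for which Theorem~\ref{Surgeryseq} produces the hexagon, so that "Baum--Connes holds for $X\rtimes\Gamma$" is exactly the statement "$\mu^{BC}_0$ and $\mu^{BC}_1$ are isomorphisms", with no compatibility gap to bridge.
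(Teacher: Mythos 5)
Your diagram chase is correct, and it is exactly the argument the paper has in mind: the corollary is stated there as an immediate ("obvious") consequence of the six-term exact sequence of Theorem \ref{Surgeryseq}, with no written proof, and your forward and converse chases in the hexagon are the standard way to make that explicit. No gap, and no difference in approach beyond your (sensible) remark about matching the inductive-limit descriptions, which is already built into the statement of Theorem \ref{Surgeryseq}.
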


Notice that $X\rtimes \Gamma$ satisfies the Baum-Connes conjecture if and only if the group $\Gamma$ satisfies the Baum-Connes conjecture with coefficients in the $C^*$-algebra $C(X)$. Therefore, for all the discrete groups  $\Gamma$ which satisfy the Baum-Connes conjecture with commutatives coefficients, we get the vanishing of the defect groups $S_*(X\rtimes \Gamma)$ for all compact $\Gamma$-spaces $X$ as above.
The proof of Theorem \ref{Surgeryseq} uses most of the results proved before as well as the Paschke duality isomorphism explained in Section \ref{AppendixC}.  Let us start with the following

\begin{proposition}\label{PaschkeSquare}
Assume that $\Gamma$ acts properly and cocompactly on the locally compact (Hausdorff) space $Z$ and let  $i: Z'\hookrightarrow Z$ be a closed $\Gamma$-invariant subspace as before. Then we have a commutative diagram of group homomorphisms ($i=0, 1\in \Z_2$):
 \[
\begin{CD}\label{MoritaCommutation}
K_i(Q^*_\Gamma (X; (Z', L^2Z'\otimes \ell^2\Gamma^\infty))) @> i^Q_{Z'\subset Z}   >> K_i(Q^*_\Gamma (X; (Z, L^2Z\otimes \ell^2\Gamma^\infty))) \\
@V\maP^{Z'}_* VV    @V\maP^Z_{*}VV \\
KK^{i+1}_\Gamma(Z', X) @> {i_{*}}>> KK_{\Gamma}^{i+1}(Z, X)\\
\end{CD}
\]
where the vertical maps $\maP^\bullet_*$ are the Paschke duality isomorphisms described in Theorem \ref{Paschkev2}.
\end{proposition}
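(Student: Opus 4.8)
The plan is to verify commutativity of the square at the level of the explicit Kasparov cycles that represent the Paschke--Higson map, exploiting the fact that the horizontal maps are themselves induced by concrete operations (an $\Ad_W$-type conjugation on the top, pushforward of cycles on the bottom). Recall that for a projection $E$ in $Q^*_\Gamma(X;(Z',L^2Z'\otimes\ell^2\Gamma^\infty))$ the class $\maP^{Z'}_0([E])$ is represented by the odd Kasparov $\Gamma$-cycle $\bigl(C(X)\otimes L^2Z'\otimes\ell^2\Gamma^\infty,\ \pi_{Z'},\ 2E-I\bigr)$, where $\pi_{Z'}=\pi'\circ p_2^*$; and the class $i^Q_{Z'\subset Z}([E])$ is, by Lemma \ref{functorial4} applied to the inclusion $i:Z'\hookrightarrow Z$, represented by $[\Ad_W(E)]$ for a Roe--Voiculescu covering $\Gamma$-isometry $W: L^2Z'\otimes\ell^2\Gamma^\infty\to L^2Z\otimes\ell^2\Gamma^\infty$ for $i$. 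Thus I would first write out $\maP^Z_0\circ i^Q_{Z'\subset Z}([E])$ as the class of
$$
\bigl(C(X)\otimes L^2Z\otimes\ell^2\Gamma^\infty,\ \pi_Z,\ 2\,\Ad_W(E)-I\bigr),
$$
with $\pi_Z=\pi\circ p_2^*$, and the other composite $i_*\circ\maP^{Z'}_0([E])$ as the image under the functoriality map $i_*:KK^{1}_\Gamma(Z',X)\to KK^{1}_\Gamma(Z,X)$ of the class of $\bigl(C(X)\otimes L^2Z'\otimes\ell^2\Gamma^\infty,\ \pi_{Z'},\ 2E-I\bigr)$.

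The key step is then to identify these two $KK_\Gamma$-cycles. On the one hand, $i_*$ on $KK^*_\Gamma(Z',X)$ is induced by the $*$-homomorphism $i^*:C_0(Z)\to C_0(Z')$ (restriction), i.e. it is given by pre-composing the representation with $i^*$; concretely, since $Z'\hookrightarrow Z$ is a closed $\Gamma$-subspace and $\pi',\pi$ are chosen compatibly, $\pi_{Z'}$ is essentially $\pi_Z\circ i^*$ on the same module. On the other hand, the defining property of a Roe--Voiculescu covering isometry (Definition \ref{VRiso2}) is precisely that $W^*\pi_Z(\varphi)W-\pi_{Z'}(\varphi\circ i)$ is locally compact for all $\varphi\in C_0(Y)$; equivalently $\Ad_W$ intertwines $\pi_{Z'}$ and $\pi_Z$ up to $C(X,\maK)$. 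Therefore conjugating the cycle $(\,\cdot\,,\pi_{Z'},2E-I)$ by the unitary part arising from $W$ — more precisely, applying Kasparov's homological equivalence (the same lemma \ref{KK_G-equiv} used in the proof of Theorem \ref{Paschkev2}) to the isometry $W$, after stabilising so that $W$ becomes a unitary onto its range complemented by a degenerate summand — produces exactly the cycle $(\,\cdot\,,\pi_Z,2\,\Ad_W(E)-I)$ up to a locally compact perturbation and a degenerate cycle. This is the heart of the argument and shows the square commutes for $i=0$; the case $i=1$ is entirely parallel, replacing projections by unitaries (self-adjoint involutions of odd type) and running the same cycle manipulation.

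The main obstacle I anticipate is bookkeeping rather than conceptual: one must be careful that the Roe--Voiculescu isometry used to compute $i^Q_{Z'\subset Z}$ via Lemma \ref{functorial4} is compatible with the representations $\pi_{Z'},\pi_Z$ entering the Paschke map — that is, $W$ must be a covering isometry for $i$ with respect to the same ample representations used to define $Q^*_\Gamma$, so that the "intertwining up to compacts" property is available simultaneously for the propagation/local-compactness conditions defining $D^*_\Gamma$ and for the $\pi_Z(\varphi)$-commutators defining the Kasparov cycle. Since the existence results (Lemma \ref{VR-isometry} and its cousins) produce such a $W$ with all required properties at once, this compatibility is automatic; but care is needed because both $\maP$ and $i^Q$ were a priori defined with possibly different auxiliary choices, and one invokes the independence statements (the last clause of Lemma \ref{functorial4} for $i^Q$, and the well-definedness in Theorem \ref{Paschkev2} for $\maP$) to reduce to a common choice before comparing. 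Once that reduction is in place, the diagram chase is the short computation sketched above, and passing to the inductive limit over $Z\subset\underline E\Gamma$ then upgrades Proposition \ref{PaschkeSquare} to the naturality needed in the proof of Theorem \ref{Surgeryseq}.
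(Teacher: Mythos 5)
Your proposal follows essentially the same route as the paper's proof: both composites are written out as explicit Kasparov cycles, the Roe--Voiculescu intertwining property of $W$ together with Kasparov's homological lemma \ref{KK_G-equiv} identifies the conjugated cycle with the compression by $WW^*$, and the complementary summand is absorbed as a degenerate cycle (the paper's decomposition $E_1\oplus E_2$ with $-\id_{E_2}$), with the $i=1$ case handled in parallel. Your remark on reducing to a common covering isometry via the independence statements is exactly the bookkeeping the paper relies on, so the argument is correct as proposed.
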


\begin{proof}\
Let us treat the case $i=0$.
Consider the representation $\pi: C_0(Z)\rightarrow \maL_{C(X)}(L^2Z\otimes \ell^2\Gamma^\infty\otimes C(X))$ given by $\pi:=\pi_Z^\infty\otimes \id_{C(X)}$. Similarly we define
$\pi':C_0(Z')\rightarrow \maL_{C(X)}(L^2Z'\otimes \ell^2\Gamma^\infty\otimes C(X)) $.  Let $W:L^2Z'\otimes \ell^2\Gamma^\infty
\rightarrow L^2Z\otimes \ell^2\Gamma^\infty$ be a {Roe-Voiculescu} covering $\Gamma$-isometry for the inclusion map $i:Z'\rightarrow Z$. Then the map $i^Q_{Z'\subset Z}$ is induced by
$$
\Ad_{W}: Q^*_\Gamma (X; (Z', L^2Z'\otimes \ell^2\Gamma^\infty)))\rightarrow Q^*_\Gamma (X; (Z, L^2Z\otimes \ell^2\Gamma^\infty)))
$$
Therefore, the composite map $\maP^{Z}_0\circ i^Q_{Z'\subset Z}$ is given for $P'\in \Proj(Q^*_\Gamma(X;(Z, L^2Z\otimes \ell^2\Gamma^\infty)))$
 by the formula
$$
\maP^{Z}_0\circ i^Q_{Z'\subset Z}([P'])= [(\pi, L^2Z\otimes \ell^2\Gamma^\infty\otimes C(X), 2(W\otimes \id_{C(X)})P'(W^*\otimes \id_{C(X)}) -\id)].$$
Let $i^*: C_0(Z)\rightarrow C_0(Z')$ be the restriction map. Then the composite map $i_{*}\circ \maP^{Z'}_0$ is given by
$$
i_{*}\circ \maP^{Z'}_0([P'])= [(\pi'\circ i^*, L^2Z'\otimes \ell^2\Gamma^\infty\otimes C(X), 2P'-\id)]
$$
Consider the projection $S= WW^*$ which induces a decomposition of $L^2Z\otimes \ell^2\Gamma^\infty$ into a direct sum $H_1\oplus H_2$, where $H_1:= \Im(S)$ and $H_2:= \Im(\id- S)$.
There is a corresponding decomposition of the Hilbert module
$L^2Z\otimes \ell^2\Gamma^\infty\otimes C(X)$ into orthocomplemented submodules:
$$
L^2Z\otimes \ell^2\Gamma^\infty\otimes C(X)= E_1\oplus E_2
$$
where $E_1=\Im(S\otimes \id_{C(X)})$ and $E_2= \Im((\id-S)\otimes \id_{C(X)})$. In particular, the operator $ W\otimes \id_{C(X)}: L^2Z'\otimes \ell^2\Gamma^\infty\otimes C(X)\rightarrow E_1$ is then a unitary isomorphism. Now, the operator
$$
2(W\otimes \id_{C(X)})P'(W^*\otimes \id_{C(X)})-\id_{L^2Z\otimes \ell^2\Gamma^\infty\otimes C(X)} \in \maL_{C(X)}(L^2Z\otimes \ell^2\Gamma^\infty\otimes C(X))
$$
can  be expressed in matrix form as follows:
$$
2(W\otimes \id_{C(X)})P'(W^*\otimes \id_{C(X)})-\id= \begin{bmatrix}
              (W\otimes \id_{C(X)})(2P'-\id)(W^*\otimes \id_{C(X)}) & 0\\ 0 & -\id_{E_2}
             \end{bmatrix}
$$
Therefore we have the following chain of equivalences in $KK_\Gamma^1(Z, X)$:
\begin{eqnarray*}
 &&(\pi'\circ i^*, L^2Z'\otimes \ell^2\Gamma^\infty\otimes C(X), 2P'-\id) \\
 &\sim&\left( (W\otimes \id_{C(X)})(\pi'\circ i^*)(\bullet) (W^*\otimes \id_{C(X)}), E_1, (W\otimes \id_{C(X)})(2P'-\id)(W^*\otimes \id_{C(X)}) \right)\; \\
 &\sim&( (S\otimes \id_{C(X)})\pi(\bullet) (S\otimes \id_{C(X)}), E_1, (W\otimes \id_{C(X)})(2P'-\id)(W^*\otimes \id_{C(X)})  )\quad \\
 &\sim&( (S\otimes \id_{C(X)})\pi(\bullet) (S\otimes \id_{C(X)})\oplus ((I-S)\otimes \id_{C(X)})\pi(\bullet) ((I-S)\otimes \id_{C(X)}),E_1\oplus E_2,\\
 && (W\otimes \id_{C(X)})(2P'-\id)(W^*\otimes \id_{C(X)})\oplus -\id_{E_2})) \\
 &&= (\pi, L^2Z\otimes \ell^2\Gamma^\infty\otimes C(X), 2(W\otimes \id_{C(X)}) P' (W^*\otimes \id_{C(X)})-\id_{L^Z\otimes \ell^2\Gamma^\infty\otimes C(X)})
\end{eqnarray*}
The first equivalence is induced by unitary conjugation, the second equivalence is due to Lemma \ref{KK_G-equiv} and the last equivalence corresponds to addition of a degenerate cycle. Hence, the proof is complete for $i=0$.  The proof for $i=1$ is similar and is omitted.
\end{proof}

\begin{corollary}
The Paschke  duality isomorphisms described in Theorem \ref{Paschkev2}, induce the two well defined isomorphisms
$$
\maP_*\; :\;  \lim_{\stackrel{\longrightarrow}{Z\subset \underline{E}\Gamma}} K_{*}\left(Q^*_\Gamma (X ; (Z, L^2Z\otimes \ell^2\Gamma^\infty))\right) \stackrel{\cong}{\longrightarrow} RK_{*+1}(X\rtimes\Gamma), \quad *=0, 1\in \Z_2.
$$
\end{corollary}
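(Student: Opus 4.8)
The plan is to obtain the corollary as a purely formal consequence of Theorem~\ref{Paschkev2} and Proposition~\ref{PaschkeSquare}, together with the fact that $\varinjlim$ preserves isomorphisms. First I would observe that the two sides of the asserted map are colimits over \emph{the same} directed poset, namely the set of $\Gamma$-invariant, $\Gamma$-cocompact closed subspaces $Z\subseteq\underline{E}\Gamma$ ordered by inclusion. On the left, the transition maps are the homomorphisms $i^Q_{Z'\subset Z}$ introduced above (which are well defined, being independent of the choice of Roe--Voiculescu covering $\Gamma$-isometry); on the right the transition maps are the $KK_\Gamma$-functoriality morphisms $i_*\colon KK^{i+1}_\Gamma(Z',X)\to KK^{i+1}_\Gamma(Z,X)$, so that by definition $\varinjlim_{Z} KK^{*+1}_\Gamma(Z,X)=RK_{*+1}(X\rtimes\Gamma)$. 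For each such $Z$ the $\Gamma$-action on $Z$ is proper and cocompact, so Theorem~\ref{Paschkev2} applies and delivers an isomorphism $\maP^Z_*\colon K_*(Q^*_\Gamma(X;(Z,L^2Z\otimes\ell^2\Gamma^\infty)))\xrightarrow{\cong}KK^{*+1}_\Gamma(Z,X)$.

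Next I would invoke Proposition~\ref{PaschkeSquare}: for every inclusion $i\colon Z'\hookrightarrow Z$ of such subspaces the square relating $i^Q_{Z'\subset Z}$, $i_*$, $\maP^{Z'}_*$ and $\maP^{Z}_*$ commutes. This says exactly that $(\maP^Z_*)_Z$ is a morphism of directed systems of abelian groups over our poset, hence by the universal property of the direct limit it induces a well-defined homomorphism on colimits
$$
\maP_*=\varinjlim_{Z}\maP^Z_*\ \colon\ \varinjlim_{Z} K_*\big(Q^*_\Gamma(X;(Z,L^2Z\otimes\ell^2\Gamma^\infty))\big)\ \longrightarrow\ \varinjlim_{Z} KK^{*+1}_\Gamma(Z,X)=RK_{*+1}(X\rtimes\Gamma),
$$
for $*=0,1\in\Z_2$.

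Finally, to see that $\maP_*$ is an isomorphism I would note that inverting each commuting square of Proposition~\ref{PaschkeSquare} shows that the family $((\maP^Z_*)^{-1})_Z$ is again a morphism of directed systems (now from the $KK_\Gamma$-system to the $Q^*$-system), so it induces $\varinjlim_Z(\maP^Z_*)^{-1}$ on colimits, and this is a two-sided inverse to $\maP_*$ because it is one at each stage $Z$ and passage to the colimit is functorial; equivalently, one simply uses that $\varinjlim$ is exact on directed systems of abelian groups and therefore carries an isomorphism of systems to an isomorphism of limits. There is essentially no analytic obstacle at this stage: all the substance has already been absorbed into the equivariant Paschke--Higson duality of Theorem~\ref{Paschkev2} and into the naturality square of Proposition~\ref{PaschkeSquare}. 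The only thing that truly needs care --- and which is precisely what Proposition~\ref{PaschkeSquare} guarantees --- is that the two directed systems are indexed by the same poset and that the transition maps on the two sides intertwine the Paschke isomorphisms on the nose, so that $(\maP^Z_*)_Z$ really is a map of systems and not merely a stagewise collection of isomorphisms.
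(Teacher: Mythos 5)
Your proposal is correct and follows exactly the route the paper intends: the corollary is stated immediately after Proposition~\ref{PaschkeSquare} precisely because the stagewise Paschke isomorphisms of Theorem~\ref{Paschkev2}, made natural with respect to the inclusion-induced transition maps by that proposition, pass to the direct limit over cocompact $\Gamma$-invariant closed subspaces of $\underline{E}\Gamma$ and yield an isomorphism of colimits. No further comment is needed.
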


\begin{proposition}\label{Moritasquare}
 With the same assumptions as in Proposition \ref{PaschkeSquare}, we have a commutative diagram of group isomorphisms ($i\in \Z_2$):
\[
\begin{CD}\label{MoritaCommutation}
K_i(C^*_\Gamma (X; (Z', L^2Z'\otimes \ell^2\Gamma^\infty))) @> i^C_{Z'\subset Z}   >> K_i(C^*_\Gamma (X; (Z, L^2Z\otimes \ell^2\Gamma^\infty))) \\
@V\maM^{Z'}_* VV    @V\maM^Z_{*}VV \\
K_{i}(C(X)\rtimes_{\redg}\Gamma) @> \id >> K_{i}(C(X) \rtimes_{\redg}\Gamma)\\
\end{CD}
\]
Here $\maM^\bullet_*$ is the Morita isomorphism described in \cite{BenameurRoyI}\rm{[Section 2]}, applied to the transformation groupoid $G=X\rtimes \Gamma$ and for the cocompact proper $G$-spaces $X\times Z'$ and $X\times Z$.
\end{proposition}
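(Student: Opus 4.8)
We give the argument for general $i\in\Z_2$; it is uniform in $i$. Write $A:=C(X)\rtimes_{\redg}\Gamma$, and for a proper cocompact $\Gamma$-space $V$ abbreviate $\maC_V:=C^*_\Gamma(X;(V,L^2V\otimes\ell^2\Gamma^\infty))$. The plan is to exploit the fact that \emph{both} vertical maps and the top horizontal map of the square are insensitive to the auxiliary data entering their definitions, and to make those choices compatibly.

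First I would recall precisely the construction of $\maM^V_*$ from \cite{BenameurRoyI}[Section 2]: it is induced by an explicit imprimitivity bimodule realizing the Morita equivalence $\maC_V\sim A$ (equivalently, since the representation used here is ample, by a stable isomorphism $\maC_V\cong A\otimes\maK$), and this bimodule is manufactured from the proper cocompact structure of $X\times V$ together with a continuous cutoff function $c_V\in C_c(X\times V)_+$ normalized by $\sum_{g\in\Gamma} g\cdot c_V=1$. Changing $c_V$ only moves the bimodule within its Morita class, so $\maM^V_*$ does not depend on $c_V$. On the other hand, by Lemma \ref{functorial2} the map $i^C_{Z'\subset Z}$ equals $\Ad_{W,*}$ for any Roe covering $\Gamma$-isometry $W\colon L^2Z'\otimes\ell^2\Gamma^\infty\to L^2Z\otimes\ell^2\Gamma^\infty$ for the inclusion $i$ (such $W$ exist since $i$ is a $\Gamma$-coarse equivalence, by Corollary \ref{inclusioninv}), and $\Ad_{W,*}$ does not depend on $W$ either.

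Next I would make the choices compatibly. Fix a cutoff $c_Z\in C_c(X\times Z)_+$ and set $c_{Z'}:=c_Z|_{X\times Z'}$; since $Z'$ is closed and $\Gamma$-invariant this lies in $C_c(X\times Z')_+$ and satisfies $\sum_g g\cdot c_{Z'}=(\sum_g g\cdot c_Z)|_{X\times Z'}=1$, so it is a legitimate cutoff for $X\times Z'$, and now the bimodules for $Z$ and $Z'$ (resp.\ the stable isomorphisms $\maC_{Z'}\cong A\otimes\maK(\maH_{Z'})$ and $\maC_Z\cong A\otimes\maK(\maH_Z)$) are assembled from the very same data read off $X\times Z\supseteq X\times Z'$. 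Then I would build $W$ adapted to this: using the construction of covering $\Gamma$-isometries (Lemma \ref{Roeiso}) --- a $\Gamma$-equivariant Borel partition of $Z$ of small diameter together with isometric embeddings of the corresponding $L^2$-pieces into $L^2Z\otimes\ell^2\Gamma^\infty$ --- I would take for $Z'$ the partition induced from that of $Z$, so that $W$ carries each $Z'$-piece into the corresponding $Z$-piece. With these choices, transporting $\Ad_W$ through the two identifications should present it as a homomorphism of the form $\id_A\otimes\phi$ with $\phi\colon\maK(\maH_{Z'})\to\maK(\maH_Z)$ induced by an isometry of separable infinite-dimensional Hilbert spaces. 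Any such $\id_A\otimes\phi$ induces the identity on $K_*(A)$ under the canonical identifications $K_*(A\otimes\maK)=K_*(A)$, and therefore $\maM^Z_*\circ i^C_{Z'\subset Z}=\maM^Z_*\circ\Ad_{W,*}=\maM^{Z'}_*$, which is the asserted commutativity.

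The step I expect to be the main obstacle is exactly the claim that the adapted $W$ intertwines the two Morita/stabilization presentations in the clean form $\id_A\otimes\phi$: this requires reconciling the finite-propagation bookkeeping built into the construction of the $\maM^V_*$ with that built into $W$ --- equivalently, checking that $\Ad_W$ carries the hereditary subalgebra of $\maC_{Z'}$ cut out by the $c_{Z'}$-cutoff onto a full corner of the one cut out by the $c_Z$-cutoff, compatibly with the two isomorphisms onto $A$. If constructing such an adapted $W$ turns out to be awkward, I would instead take an arbitrary covering $\Gamma$-isometry and show that the induced $*$-homomorphism $A\otimes\maK\to A\otimes\maK$ is homotopic through $*$-homomorphisms to one of the form $\id_A\otimes\phi$; this reduces to a rigidity statement for $\Gamma$-equivariant ample covering isometries over proper cocompact spaces, in the spirit of Lemma \ref{functorial2} and Theorem \ref{EquivPPV}: any two of them differ, after stabilization, by conjugation by a $\Gamma$-invariant unitary of finite propagation, and such a conjugation is inner, hence induces the identity on $K$-theory.
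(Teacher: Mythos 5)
Your overall strategy (compare the two Morita presentations through a covering isometry and argue it induces the identity on $K_*(C(X)\rtimes_{\redg}\Gamma)$) points in the right direction, but the proposal has a genuine gap: the step you yourself flag as ``the main obstacle'' --- that the transported $\Ad_W$ takes the clean form $\id_A\otimes\phi$ with respect to the two stabilizations --- is exactly the content of the proposition, and neither your primary route (adapted partitions and compatible cutoffs $c_{Z'}=c_Z|_{X\times Z'}$) nor your fallback (a rigidity/homotopy statement reducing an arbitrary covering isometry to one of the form $\id_A\otimes\phi$) is carried out; the fallback is itself a nontrivial unproved claim, and ``conjugation by a $\Gamma$-invariant finite-propagation unitary is inner'' is not a statement you can invoke as it stands. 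Moreover, it is doubtful that any choice of $W$ literally produces a map of the form $\id_A\otimes\phi$ under the stable isomorphisms; what one can and should prove is a cycle-level compatibility, not an algebra-level splitting.

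The paper's proof shows that no adaptation of $W$ and no compatible choice of cutoffs is needed. For an \emph{arbitrary} Roe covering $\Gamma$-isometry $W$ over the inclusion, the $\Gamma$-equivariance of $W$ alone implies that $\id_{C(X)}\otimes W$ preserves the $C(X)\rtimes_{\redg}\Gamma$-valued inner products, and finite propagation implies it maps $C(X)\otimes C_c(Z',\ell^2\Gamma^\infty)$ into compactly supported classes; hence $W$ induces an adjointable isometry $\what W\colon L^2_{X\rtimes\Gamma}(X\times Z')\otimes\ell^2\Gamma^\infty\to L^2_{X\rtimes\Gamma}(X\times Z)\otimes\ell^2\Gamma^\infty$ of the Morita bimodules, satisfying the intertwining relation $(\id_{C(X)}\otimes W)\circ\Phi^{X\times Z'}=\Phi^{X\times Z}\circ(\what W\otimes_\lambda\id_{\maE_{X\rtimes\Gamma}})$ with the structural isomorphisms $\Phi^{\bullet}$ of \cite{BenameurRoyI}. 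Since $\maM^Z_*$ is the Kasparov product with the cycle $((\Phi^{X\times Z}_*)^{-1},\,L^2_{X\rtimes\Gamma}(X\times Z)\otimes\ell^2\Gamma^\infty,\,0)$, the composite $i^C_{Z'\subset Z}$ followed by $\maM^Z_*$ is represented by $((\Phi^{X\times Z}_*)^{-1}\circ\Ad_{\id_{C(X)}\otimes W},\,L^2_{X\rtimes\Gamma}(X\times Z)\otimes\ell^2\Gamma^\infty,\,0)$, while $\maM^{Z'}_*$ is represented by the same formula on the submodule $\what W\,L^2_{X\rtimes\Gamma}(X\times Z')\otimes\ell^2\Gamma^\infty$; these cycles agree because the left action $(\Phi^{X\times Z}_*)^{-1}\circ\Ad_{\id_{C(X)}\otimes W}$ annihilates $\Ker(\what W^*)$. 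So the heart of the matter is this explicit bimodule computation, which your proposal defers rather than supplies; to repair it you would need to establish the existence and adjointability of $\what W$ and the displayed intertwining relation (or an equivalent statement), after which the compatible-cutoff and adapted-partition machinery becomes unnecessary.
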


\begin{proof}\
Let $W: L^2Z'\otimes \ell^2\Gamma^\infty \to L^2Z\otimes \ell^2\Gamma^\infty$ be a Roe covering $\Gamma$-isometry over the inclusion $i:Z'\hookrightarrow Z$. Then the map $\id_{C(X)}\otimes W$ sends $C(X)\otimes C_c(Z', \ell^2\Gamma^\infty)$ inside the Hilbert module   $L^2_{X\rtimes \Gamma} (X\times Z)\otimes \ell^2\Gamma^\infty$, see \cite{BenameurRoyI}\rm{[Section 2]}. Indeed, the isometry $W$ has finite propagation and sends any continuous compactly supported function from $C_c(Z', \ell^2\Gamma^\infty)$ to a $Z$-compactly supported class in $L^2(Z, \ell^2\Gamma^\infty)$. Moreover, we have for any $\alpha\in C(X)$ and $\rho\in C_c(Z', \ell^2\Gamma^\infty)$:
$$
\left<\alpha\otimes W\rho, \alpha\otimes W\rho\right>_{C(X)\rtimes\Gamma} (x, \gamma)=[\bar\alpha (\gamma^*\alpha)] (x) \left< W\rho, \gamma^*(W\rho)  \right>_{L^2Z\otimes \ell^2\Gamma^\infty}.
$$
But $W$ is a $\Gamma$-equivariant isometry from $L^2Z'\otimes \ell^2\Gamma^\infty$ to $L^2Z\otimes \ell^2\Gamma^\infty$, so we deduce
$$
\left<\alpha\otimes W\rho, \alpha\otimes W\rho\right>_{C(X)\rtimes\Gamma} (x, \gamma) = [\bar\alpha (\gamma^*\alpha)] (x) \left< \rho, \gamma^*(\rho)  \right>_{L^2Z\otimes \ell^2\Gamma^\infty}.
$$
This shows that
$$
\left<\alpha\otimes W\rho, \alpha\otimes W\rho\right>_{C(X)\rtimes\Gamma} = \left<\alpha\otimes \rho, \alpha\otimes\rho\right>_{C(X)\rtimes\Gamma}.
$$
Moreover the induced isometry, denoted
$$
{\what W}: L^2_{X\rtimes \Gamma} (X\times Z')\otimes \ell^2\Gamma^\infty\rightarrow L^2_{X\rtimes \Gamma} (X\times Z)\otimes \ell^2\Gamma^\infty,
$$
is adjointable. In particular, it is $C_c(X\times \Gamma)$-linear but this can be checked immediately, as for $\xi\in C_c(X\times Z', \ell^2\Gamma^\infty)$ and $f\in C_c(X\times \Gamma)$ one has
$$
{\what W} (\xi f)  =  {\what W} \left( \sum_{\gamma\in \Gamma} f(\gamma) (\gamma \xi)\right)
 =  \sum_{\gamma\in \Gamma} f(\gamma) (\id_{C(X)}\otimes W) (\gamma\xi)
 =  \sum_{\gamma\in \Gamma} f(\gamma) \gamma(\id_{C(X)}\otimes W) \xi = {\what W} (\xi) f.
$$
In a similar computation, one sees that
$$
(\id_{C(X)} \otimes W) \; \circ \; \Phi^{X\times Z'} \; = \;  \Phi^{X\times Z}  \; \circ \; ({\what W} \otimes_\lambda \id_{\maE_{X\rtimes \Gamma}}).
$$
We have denoted here $\Phi^{X\times Z}$ the isomorphism of \cite{BenameurRoyI}[Proposition 2.16]  for the proper cocompact space $X\times Z$, and similarly for $X\times Z'$, but that we have tensored with the identity of the $\Gamma$-representation $\ell^2\Gamma^\infty$. {{So, we have
$$
\Phi^{X\times Z} \, : \, \left(L^2_{X\rtimes \Gamma} (X\times Z)\otimes \ell^2\Gamma^\infty\right) \otimes_\lambda \maE_{X\rtimes \Gamma} \longrightarrow C(X)\otimes L^2Z\otimes \ell^2\Gamma^\infty,
$$
and similarly for $\Phi^{X\times Z'}$.}}

Notice  now that the Morita isomorphism for $X\times Z$ as {recalled} in Section \ref{GammaFamilies}, and tensored with the identity of $\ell^2\Gamma^\infty$, is given as Kasparov product with the class in
$$
KK (C^*_\Gamma (X; (Z, L^2Z\otimes \ell^2\Gamma^\infty)), C(X)\rtimes_{\redg} \Gamma)
$$
of the Kasparov cycle
$$
((\Phi^{X\times Z}_*)^{-1}, L^2_{X\rtimes \Gamma} (X\times Z)\otimes \ell^2\Gamma^\infty ,  0).
$$
See again \cite{BenameurRoyI} for the details. Indeed $\Phi^{X\times Z}_*$ is the map $T\mapsto \Phi^{X,Z} \circ (T\otimes_{\lambda} \id) \circ (\Phi^{X,Z})^{-1}$ which induces an isomorphism from the compact operators of $L^2_{X\rtimes \Gamma} (X\times Z)\otimes \ell^2\Gamma^\infty$ to the $C^*$-algebra $C^*_\Gamma (X; (Z, L^2Z\otimes \ell^2\Gamma^\infty))$. The same construction works for $Z'$ in place of $Z$.

On the other hand, the Kasparov product $i^C_{Z'\subset Z} \otimes \maM^Z_{*}$ is represented by the cycle
$$
(L^2_{X\rtimes \Gamma} (X\times Z)\otimes \ell^2\Gamma^\infty , (\Phi^{X\times Z}_*)^{-1} \circ Ad_{\id_{C(X)}\otimes W}, 0).
$$
Using the isometry ${\what W}$ we also see that the class $\maM^{Z'}_{*}$ can be represented by the cycle
$$
(Ad_{\what W}\circ (\Phi^{X\times Z'}_*)^{-1}, {\what W} L^2_{X\rtimes \Gamma} (X\times Z')\otimes \ell^2\Gamma^\infty ,  0).
$$
which in turn coincides with the cycle
$$
((\Phi^{X\times Z}_*)^{-1}\circ Ad_{\id_{C(X)}\otimes W}, {\what W} L^2_{X\rtimes \Gamma} (X\times Z')\otimes \ell^2\Gamma^\infty , 0).
$$
It thus remains to show that this latter cycle is equivalent to the cycle
$$
((\Phi^{X\times Z}_*)^{-1} \circ Ad_{\id_{C(X)}\otimes W}, L^2_{X\rtimes \Gamma} (X\times Z)\otimes \ell^2\Gamma^\infty ,  0).
$$
Notice though that for any $T\in C^*_\Gamma (X; (Z', L^2Z'\otimes \ell^2\Gamma^\infty))$, and any $\xi$ in the range of the projection $\id - {\what W}{\what W}^*$ on the Hilbert module $L^2_{X\rtimes \Gamma} (X\times Z')\otimes \ell^2\Gamma^\infty$ over $C(X)\rtimes_{\redg}\Gamma$, we have
$$
\left((\Phi^{X\times Z}_*)^{-1}\circ Ad_{\id_{C(X)}\otimes W}\right) (T) (\xi) = 0,
$$
since $\xi\in \Ker ({\what W}^*)$ and by the relation
$$
(\Phi^{X\times Z}_*)^{-1} \circ Ad_{\id_{C(X)}\otimes W} \; = \; Ad_{\what W} \circ (\Phi^{X\times Z'}_*)^{-1}.
$$
Therefore, the cycle $((\Phi^{X\times Z}_*)^{-1} \circ Ad_{\id_{C(X)}\otimes W}, L^2_{X\rtimes \Gamma} (X\times Z)\otimes \ell^2\Gamma^\infty ,  0)$ is
indeed equivalent to the cycle $((\Phi^{X\times Z}_*)^{-1}\circ Ad_{\id_{C(X)}\otimes W}, {\what W} L^2_{X\rtimes \Gamma} (X\times Z')\otimes \ell^2\Gamma^\infty ,  0)$.
\end{proof}

To sum up, we have proved the following

\begin{proposition}\label{functorial1} With our notations, we have a commutative cube (for $i\in \Z_2$), i.e. all the square faces commute:\\\newline
\hspace{-4.5cm}
\begin{tiny}

\begin{tikzcd}[row sep=tiny, column sep=tiny]\hspace{-3cm}
& K_i(Q^*_\Gamma (X; (Z', L^2Z'\otimes \ell^2\Gamma^\infty))) \arrow[dl] \arrow[rr] \arrow[dd] & & K_i(Q^*_\Gamma (X; (Z, L^2Z\otimes \ell^2\Gamma^\infty))) \arrow[dl] \arrow[dd] \\
K_{i+1}(C^*_\Gamma (X; (Z', L^2Z'\otimes \ell^2\Gamma^\infty))) \arrow[rr, crossing over] \arrow[dd] & &  K_{i+1}(C^*_\Gamma (X; (Z, L^2Z\otimes \ell^2\Gamma^\infty)))\\
& KK^{i+1}_\Gamma (Z',X) \arrow[dl] \arrow[rr] & & KK^{i+1}_\Gamma (Z, X) \arrow[dl] \\
K_{i+1}(C(X) \rtimes_{\redg}\Gamma) \arrow[rr] & & K_{i+1}(C(X) \rtimes_{\redg}\Gamma) \arrow[from=uu, crossing over]\\
\end{tikzcd}
\end{tiny}
\end{proposition}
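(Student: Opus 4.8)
\noindent The plan is to read the cube as an assemblage of six square faces and to verify each of them, almost all having already been established above.

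First, I would identify four of the faces with results already in hand. The left and right faces are, respectively, the commutative square of \cite{BenameurRoyI} recalled in Section~\ref{GammaFamilies} --- relating $\partial_i$, the Paschke--Higson map $\maP_i$, the Morita isomorphism $\maM_{i+1}$ and the Baum--Connes map $\mu_\Gamma$, in its version for the fibrewise ample representation $L^2Z\otimes\ell^2\Gamma^\infty$ --- applied to the cocompact proper $\Gamma$-space $Z'$ and to $Z$. The back face is exactly Proposition~\ref{PaschkeSquare} (compatibility of $\maP$ with $i^Q_{Z'\subset Z}$ and with the functoriality morphism $i_*$ on $KK_\Gamma$), and the front face is exactly Proposition~\ref{Moritasquare} (compatibility of $\maM$ with $i^C_{Z'\subset Z}$, the bottom edge there being the identity of $K_{i+1}(C(X)\rtimes_{\redg}\Gamma)$). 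Hence only the top and bottom faces remain.

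For the top face I would fix a single Roe--Voiculescu covering $\Gamma$-isometry $W$ for $i:Z'\hookrightarrow Z$. Since $W$ is in particular a Roe covering $\Gamma$-isometry, the map $\Ad_W$ (resp.\ $\Ad_{\id_{C(X)}\otimes W}$) simultaneously induces the homomorphisms $C^*_\Gamma(X;(Z',\cdot))\to C^*_\Gamma(X;(Z,\cdot))$, $D^*_\Gamma(X;(Z',\cdot))\to D^*_\Gamma(X;(Z,\cdot))$ and $Q^*_\Gamma(X;(Z',\cdot))\to Q^*_\Gamma(X;(Z,\cdot))$ that compute $i^C_{Z'\subset Z}$, $i^D_{Z'\subset Z}$ and $i^Q_{Z'\subset Z}$ on $K$-theory, by Lemmas~\ref{functorial2} and \ref{functorial4}. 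Thus $\Ad_W$ is a morphism from the Higson--Roe extension $0\to C^*_\Gamma\hookrightarrow D^*_\Gamma\to Q^*_\Gamma\to 0$ for $Z'$ to the one for $Z$, and the naturality of the connecting homomorphism in $K$-theory gives $i^C_{Z'\subset Z}\circ\partial_i=\partial_i\circ i^Q_{Z'\subset Z}$, i.e.\ the top face commutes. Working with one fixed $W$ is legitimate because, by the same two lemmas, the induced $K$-theory maps are independent of the chosen Roe(--Voiculescu) covering $\Gamma$-isometry, so this $W$ may also be taken to be the isometry used in Propositions~\ref{PaschkeSquare} and \ref{Moritasquare}.

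For the bottom face I would invoke the standard naturality of the Baum--Connes assembly map: $\mu_\Gamma^\bullet$ is functorial in the first variable with respect to the maps $i_*:KK^{i+1}_\Gamma(Z',X)\to KK^{i+1}_\Gamma(Z,X)$ of the inductive system (it is given by Kasparov product with a class that does not depend on the subspace), so $\mu_\Gamma^Z\circ i_*=\mu_\Gamma^{Z'}$, the bottom edge of the cube being the identity; this is precisely the compatibility that makes $\mu_\Gamma^\bullet$ descend to the limit $RK_*(X\rtimes\Gamma)$, see \cite{BaumConnes, BaumConnesHigson}. Once all six faces commute, so does the cube. I expect the only genuinely delicate point to be the top face --- more precisely, checking that $\Ad_W$ really does respect the Higson--Roe extension so that the naturality of $\partial_i$ can be invoked, and that the isometry data can be chosen coherently across the three faces built from $\Ad_W$ --- but both of these are already contained in (the proofs of) Lemmas~\ref{functorial2} and \ref{functorial4}, so no essential new work is needed.
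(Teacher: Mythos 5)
Your decomposition into six faces and the way you dispose of each one — front face by Proposition \ref{Moritasquare}, back face by Proposition \ref{PaschkeSquare}, left and right faces by the compatibility theorem of \cite{BenameurRoyI} recalled in Section \ref{GammaFamilies}, top face by using one fixed Roe--Voiculescu covering $\Gamma$-isometry so that $\Ad_W$ is a morphism of the Higson--Roe extensions and naturality of the boundary map applies, and bottom face by the classical naturality of the Baum--Connes assembly map under inclusions of cocompact subspaces — is exactly the paper's own argument, just spelled out slightly more explicitly on the top face. The proposal is correct and essentially identical in approach to the paper's proof.
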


\begin{proof}
We have  shown in Proposition \ref{Moritasquare} that the front vertical square commutes with all arrows as isomorphisms and with the bottom horizontal morphism being the identity. The top horizontal square commutes, because we may use the same {Roe-Voiculescu} covering $\Gamma$-isometry over the inclusion $i: Z'\hookrightarrow Z$ in order to define $i_*^C$ and $i_*^Q$.  The back vertical square commutes by Proposition \ref{PaschkeSquare}. The bottom horizontal square is commutative as a classical result from the construction of the Baum-Connes map using the Michschenko idempotent and not depending on the choice of a particular cut-off function, see for instance \cite{TuHyper}. The two remaining squares, say the left and right side squares,  commute for any \'etale groupoid $G$ by \cite{BenameurRoyI}[Theorem 3.3].
\end{proof}

\medskip

Theorem  \ref{Surgeryseq} is now essentially proved, but  let us finish this section by explaining the main steps of this proof for the sake of completeness.

\begin{proof}(of Theorem \ref{Surgeryseq})\
For any proper cocompact $\Gamma$-space $Z$, we have the six-term exact sequence  ($i\in \Z_2$):

\begin{displaymath}\label{Figure1}
\xymatrixcolsep{1pc}\xymatrix{
K_i (Q^*_\Gamma (X; (Z, L^2Z\otimes \ell^2\Gamma^\infty))) \ar[rr]^{\partial}  &  & K_{i+1} (C^*_\Gamma (X; (Z, L^2Z\otimes \ell^2\Gamma^\infty)))  \ar[dl]^{}\\ & K_{i+1} (D^*_\Gamma (X; (Z, L^2Z\otimes \ell^2\Gamma^\infty)))  \ar[ul]_{}  }
\end{displaymath}
which is associated with the short exact sequence of $C^*$-algebras
$$
0\to C^*_\Gamma (X; (Z, L^2Z\otimes \ell^2\Gamma^\infty)) \hookrightarrow D^*_\Gamma (X; (Z, L^2Z\otimes \ell^2\Gamma^\infty)) \rightarrow Q^*_\Gamma (X; (Z, L^2Z\otimes \ell^2\Gamma^\infty))\to 0.
$$
According to the Paschke isomorphism of Theorem \ref{Paschkev2} and the Morita isomorphism we deduce a six term exact sequence:

\begin{displaymath}\label{Figure1}
\xymatrixcolsep{1pc}\xymatrix{
KK^{i+1}_\Gamma (Z; X) \ar[rr]   & \stackrel{^{\mu^{BC}_{i+1}}}{} & K_{i+1} (C(X)\rtimes_{\redg} \Gamma)   \ar[dl]^{}\\ & K_{i+1} (D^*_\Gamma (X; (Z, L^2Z\otimes \ell^2\Gamma^\infty)))  \ar[ul]_{}  }
\end{displaymath}
where one has to replace the morphisms by their compositions with the appropriate isomorphisms (Morita or Paschke). We have for instance shown in \cite{BenameurRoyI}[Section 3]  that the composite map of the Boundary map $\partial$ with the inverse of the Paschke map is nothing but the Baum-Connes map $\mu^{BC}$ for the $\Gamma$-space $Z$.

By using Proposition \ref{functorial1}, we see that the natural map from the corresponding six-term exact sequence associated with a closed $\Gamma$-subspace $Z'$ of $Z$ to that of the space $Z$ is morphism of six-term exact sequences. Hence we may take the direct limit of each of the components of these exact sequences with respect to the inductive limit of all closed cocompact $\Gamma$-subspaces of the (locally compact) classifying space ${\underline E}\Gamma$ for proper $\Gamma$-actions. This yields precisely the universal Higson-Roe six-term exact sequence associated with the transformation groupoid $X\rtimes \Gamma$ as stated in  Theorem \ref{Surgeryseq}.

\end{proof}

\appendix

\section{Covering isometries}\label{Covering}

This appendix reviews  some standard constructions of isometries associated with coarse maps between proper metric spaces. {{In this appendix, Our metric $\Gamma$-spaces will be proper but not necessarily cocompact and their metrics will always be assumed to be $\Gamma$-invariant.}} We start  with the notion of Roe covering isometry.

\begin{definition}\label{VRiso1}\cite{HRbook}\ Suppose that $(Z',d')$ and $(Z,d)$ are proper metric  $\Gamma$-spaces. Let $f:Z'\rightarrow Z$ be a {{coarse map}}.
An  isometry $W: L^2Z'\otimes \ell^2\Gamma^\infty \longrightarrow L^2Z\otimes \ell^2\Gamma^\infty$ is called a
\emph{Roe covering $\Gamma$-isometry} for $f$, if it is $\Gamma$-equivariant and has finite propagation with respect to $f$, i.e. there exists an $R>0$ such that
$$
\pi_{Z}^\infty (\phi)W\pi_{Z'}^\infty (\phi')=0, \quad \forall \phi\in C_c(Z)\text{ and }\phi'\in C_c(Z') \text{ with } d_{Z}(\supp(\phi),f(\supp(\phi')))>R.
$$
 In this case the \emph{propagation} of $W$, denoted $\Prop(W)$, is the least constant $R$ satisfying the above condition.
\end{definition}

{{As we shall see below, every  $\Gamma$-equivariant coarse map $f:Z'\rightarrow Z$ admits a Roe covering $\Gamma$-isometry.  In fact, only a coarse version of the $\Gamma$-equivariance of $f$ is needed. }}

{{\begin{definition}
Suppose that $(Z',d')$ and $(Z,d)$ are proper metric  $\Gamma$-spaces. A coarse map $f:Z'\rightarrow Z$  is coarsely equivariant if  there exists a constant $M\geq 0$ such that
$$
d(f(gz'), gf(z')) \leq M, \quad \forall g\in \Gamma \text{ and }z'\in Z'.
$$
\end{definition}}}

\begin{lemma}\label{Roeiso}
Given a coarse and {{coarsely equivariant}}  map $f:Z'\rightarrow Z$ between the proper  $\Gamma$-spaces, there always exist Roe covering $\Gamma$-isometries  for $f$.
\end{lemma}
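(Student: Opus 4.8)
The plan is to adapt the construction of covering isometries from \cite{HRbook} to the $\Gamma$-equivariant setting: one produces a $\Gamma$-equivariant bounded Borel discretisation of $Z'$ and of $Z$ using properness, and then exploits the ampleness built into the amplification $\ell^2\Gamma^\infty=\ell^2\N\otimes\ell^2\Gamma$ to assemble $W$ block by block. Concretely, fix $\varepsilon>0$ and, using that $\Gamma$ acts properly on the proper metric space $Z'$, choose a countable $\Gamma$-invariant, locally finite Borel partition $Z'=\bigsqcup_{n\in I'}\bigsqcup_{g}gV_n'$ into pieces of diameter $\le\varepsilon$, where $I'$ indexes the $\Gamma$-orbits of pieces and each stabiliser $\mathrm{Stab}(V_n')$ is finite; such a partition is obtained by refining a $\Gamma$-invariant locally finite open cover of bounded diameter and disjointifying, the pieces surrounding points with non-trivial isotropy being chosen stabiliser-invariant. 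Do the same for $Z$, obtaining $Z=\bigsqcup_{m\in I}\bigsqcup_{g}gV_m$. Since $\mu_{Z'}$ and $\mu_Z$ are fully supported, every $L^2(V_n')$ and $L^2(V_m)$ is a non-zero separable Hilbert space; $\Gamma$-invariance of the measures gives unitaries $L^2(gV_n')\cong L^2(V_n')$, and the $\Gamma$-action $U'$ on $H':=L^2Z'\otimes\ell^2\Gamma^\infty$, being the tensor product of the isometric action on $L^2Z'$ with a regular-type action on $\ell^2\Gamma^\infty$, carries $L^2(V_n')\otimes\ell^2\Gamma^\infty$ onto $L^2(gV_n')\otimes\ell^2\Gamma^\infty$, and similarly for $H:=L^2Z\otimes\ell^2\Gamma^\infty$.

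\textbf{Step 2 (matching the pieces; where coarse equivariance enters).} Pick base points $z_n'\in V_n'$ for the orbit representatives. Since $f$ is coarse, $\diam f(gV_n')\le S=S(\varepsilon)$ uniformly in $g$ and $n$. For each $n\in I'$ choose a single piece of the decomposition of $Z$ containing $f(z_n')$. This is the one place the hypothesis is used, and only in its weak form: I do not require $f(gz_n')$ to lie near the chosen target piece translated by $g$ on the nose, but coarse equivariance, $d_Z(gf(z_n'),f(gz_n'))\le M$, together with $\Gamma$-invariance of $d_Z$, yields $d_Z\bigl(g\cdot(\text{target piece}),f(gV_n')\bigr)\le\varepsilon+S+M$, a constant independent of $g$ and $n$; note also that $d_Z(f(w),hf(w))\le S+M$ for all $h\in\mathrm{Stab}(V_n')$ and $w\in V_n'$, again uniformly.

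\textbf{Step 3 (assembling $W$ and finite propagation).} Decompose $H'=\bigoplus_{n,g}\bigl(L^2(gV_n')\otimes\ell^2\Gamma^\infty\bigr)$ into $\pi_{Z'}^\infty$-reducing subspaces. For each orbit representative $n$ let $B_n\subseteq Z$ be the $\mathrm{Stab}(V_n')$-saturation of the target piece chosen in Step 2: by the last estimate of Step 2 this is a bounded, $\mathrm{Stab}(V_n')$-invariant Borel set with diameter bounded uniformly in $n$ and lying within a uniform distance $R$ of $f(V_n')$ (hence of $f(gV_n')$ after translating by $g$). Choose a $\mathrm{Stab}(V_n')$-equivariant isometry $W_n^0:L^2(V_n')\otimes\ell^2\Gamma^\infty\hookrightarrow L^2(B_n)\otimes\ell^2\Gamma^\infty$: it exists because the target is infinite-dimensional separable and, restricted to the finite group $\mathrm{Stab}(V_n')$, $\ell^2\Gamma^\infty$ contains infinitely many copies of every irreducible representation. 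Route the contributions of different source pieces matched to overlapping target sets into distinct copies of $\ell^2\N$ inside $\ell^2\Gamma^\infty$ — possible since only countably many pieces, and finitely many per target orbit, are involved — so that the translated $W_n^0$'s have mutually orthogonal ranges. Now set $W:=U_g\,W_n^0\,(U_g')^{-1}$ on $L^2(gV_n')\otimes\ell^2\Gamma^\infty=U_g'\bigl(L^2(V_n')\otimes\ell^2\Gamma^\infty\bigr)$; this is well defined thanks to the $\mathrm{Stab}(V_n')$-equivariance of $W_n^0$ and the $\mathrm{Stab}(V_n')$-invariance of $B_n$, and summing over all pieces yields an isometry $W:H'\to H$ with $WU_g'=U_gW$ for every $g\in\Gamma$. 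Since $W$ maps $L^2(gV_n')\otimes\ell^2\Gamma^\infty$ into functions supported in $gB_n$, Step 2 gives $\pi_Z^\infty(\psi)\,W\,\pi_{Z'}^\infty(\phi)=0$ whenever $\phi$ is supported on pieces whose $f$-images lie at distance $>R$ from $\mathrm{supp}(\psi)$; a partition-of-unity argument then upgrades this to the estimate defining finite propagation of $W$ relative to $f$, completing the proof.

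\textbf{Main obstacle.} The genuinely delicate point is Step 1 in the non-cocompact setting: producing the $\Gamma$-invariant bounded Borel partition with finite piece-stabilisers, and then propagating this finite-group equivariance through the construction of $W$ in Step 3 while preserving orthogonality of the ranges. Everything else is the familiar amplification bookkeeping of \cite{HRbook}, and the $\Gamma$-equivariance is essentially free precisely because only coarse — not exact — equivariance of $f$ is needed, as anticipated in the remark preceding the lemma.
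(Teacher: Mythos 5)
There is a genuine gap in Step 3: the assembled operator $W$ is not shown to be an isometry, and the mechanism you offer cannot show it. Equivariance forces $W$ on $L^2(gV_n')\otimes\ell^2\Gamma^\infty$ to be $U_g W_n^0 (U_g')^{-1}$, so the copy of $\ell^2\N$ that its range occupies is the \emph{same} as that of $W_n^0$ for every $g$ in the orbit (the $\Gamma$-action is trivial on the $\ell^2\N$ factor). Hence your ``routing into distinct copies of $\ell^2\N$'' only separates ranges attached to \emph{different} orbit indices $n\neq n'$; it does nothing for two translates $gV_n'$, $g'V_n'$ of the same piece. For those, orthogonality of $U_g\,\mathrm{Ran}(W_n^0)$ and $U_{g'}\,\mathrm{Ran}(W_n^0)$ would have to come from spatial disjointness of $gB_n$ and $g'B_n$, and this genuinely fails: $B_n$ is an enlargement of a small piece by roughly $\varepsilon+S+M$, so the finite set $\{h\in\Gamma:\ hB_n\cap B_n\neq\emptyset\}$ typically strictly contains $\mathrm{Stab}(V_n')$, and for such $h=g^{-1}g'$ the two ranges can overlap. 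So, as written, $\sum$ of the blocks need not be isometric. (A second, minor, issue: your chosen target piece of the $Z$-partition may have $\mu_Z$-measure zero, in which case $L^2(B_n)=0$ and no $W_n^0$ exists; take a metric ball around $f(z_n')$ instead.)

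The gap is repairable, and the repair is instructive because it is exactly the device the paper uses. Require $\mathrm{Ran}(W_n^0)\subseteq L^2(B_n)\otimes\ell^2(\mathrm{Stab}(V_n'))\otimes\ell^2(\N_n)$, i.e.\ confine the range in the $\ell^2\Gamma$ direction to the finite subgroup $\mathrm{Stab}(V_n')$; this subspace is still $\mathrm{Stab}(V_n')$-invariant and, restricted to this finite group, contains every irreducible with infinite multiplicity, so the equivariant $W_n^0$ still exists, while now $U_g$ carries it into $\ell^2(\mathrm{Stab}(V_n')g^{-1})$, and distinct cosets give orthogonal ranges irrespective of overlaps in $Z$. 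The paper avoids your partition altogether: it takes a classical (non-equivariant) finite-propagation covering isometry $V:L^2Z'\to L^2Z$ from \cite{HRbook}, a cut-off function $c$ for the proper action, and a $\Gamma$-invariant family of isometries $U_g$ of $\ell^2\Gamma^\infty$ with $U_g^*U_{g'}=\delta_{g,g'}\id$, and sets $W=\sum_{g}(U_g\otimes\id)E_g(\id\otimes V\pi_{Z'}(\sqrt{c}))E'_{g^{-1}}$; the orthogonal ranges of the $U_g$ perform precisely the separation of group-translates that your routing misses, and coarse equivariance of $f$ enters, as in your Step 2, only to bound the propagation of the averaged operator.
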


\begin{proof}\
By the classical construction of (non-equivariant) covering isometries, see for instance \cite{HRbook}[Chapter 6], there always exists such a finite propagation isometry
$V: L^2Z'\rightarrow L^2Z$. We use a  cut-off function $c\in C(Z')$ together with the averaging procedure explained in {\cite{GWY17}, Appendix A}  to get a $\Gamma$-invariant isometry
$$
W : \ell^2\Gamma^\infty\otimes L^2Z'\rightarrow \ell^2\Gamma^\infty\otimes L^2Z.
$$
{{More precisely, recall that the cut-off function $c\in C_b(Z')$ satisfies the following conditions:
\begin{itemize}
\item For any compact subspace $K'$ in $Z'$, the set $\{g\in \Gamma, gK'\cap \Supp (c) \neq \emptyset\}$ is finite.
\item $\sum_{g\in \Gamma} g\, c = 1$.
\end{itemize}
The existence of  $c$  is ensured since $\Gamma$ acts properly  on $Z'$  (see for instance \cite{TuHyper})}}. Then we may apply the results of \cite{GWY17}, {Appendix A} for  $X=\{\bullet\}$, and deduce that there exists a family $(U_g)_{g\in \Gamma}$ of $\Gamma$-invariant operators on $\ell^2\Gamma^\infty$, such that
$$
U_g^*U_{g'} = \delta_{g, g'} \id \text{ and } \sum_{g\in \Gamma} U_g U_g^* = \id.
$$
Set then
$$
W := \sum_{g\in \Gamma} (U_g\otimes \id_{L^2Z})  E_g (\id_{\ell^2\Gamma^\infty} \otimes V\pi_{Z'}(\sqrt{c})) E'_{g^{-1}}.
$$
Here $E$ and $E'$ are the unitary representations of $\Gamma$ in $\ell^2\Gamma^\infty\otimes L^2Z$ and $\ell^2\Gamma^\infty\otimes L^2Z'$ respectively,
obtained by tensoring the unitary representations $X$ and $X'$ in $L^2Z$ and $L^2Z'$, with the {right} regular action on $\ell^2\Gamma$ tensored further with the identity of $\ell^2\N$. {{Notice that using the regular representation in $\ell^2\Gamma$ is not important here as the formula for $W$ doesn't depend on this choice of representation. The operator $W$ is then an isometry which can be written as
$$
W= \sum_{g\in \Gamma} \; U_g\otimes X_g V \pi_{Z'}(\sqrt{c}) X'_{g^{-1}}.
$$
It is then obvious, since $c$ is vertically compactly supported (support condition on $c$) and from the properness of the action of $\Gamma$ on $Z$ as well, that this sum is locally finite. More precisely, for any given compactly supported continuous function $\xi'\in C_c(Z')$, the support of  $\pi_{Z'}(\sqrt{c}) X'_{g^{-1}}\xi'$ is  only non-empty for a finite number of $g$'s, depending of course on the chosen $\xi'$. Since $V$ has finite propagation, }} the isometry {$W$} has finite propagation as well.
{{Moreover}}, since $d_Z$ and $d_{Z'}$ are $\Gamma$-invariant, the propagation of $W$ is estimated {{by the propagation of $V$ plus the propagation of the action of $\Gamma$}}.
Indeed, consider $\phi\in C_c(Z), \phi'\in C_c(Z')$ such that {{$d_Z(\supp(\phi), f(\supp(\phi'))) >\Prop V + M_1$, where $M_1$ satisfies:}}
{{
$$
d(f(gz'), gf(z')) \leq M_1,   \quad \forall g\in \Gamma \text{ and }z'\in Z',
$$
then we have:}}
\begin{eqnarray*}
\pi_Z^\infty(\phi)W\pi_{Z'}^\infty(\phi') &=& \sum_{g\in \Gamma}  \pi_Z^\infty(\phi) (U_g\otimes \id_{L^2Z} )  E_g (\id_{\ell^2\Gamma^\infty} \otimes V\pi_{Z'}(\sqrt{c})) E'_{g^{-1}} \pi_{Z'}^\infty(\phi')\\
&=& \sum_{g\in \Gamma}  (U_g\otimes \id_{L^2Z} ) \pi_Z^\infty(\phi)  E_g (\id_{\ell^2\Gamma^\infty} \otimes V\pi_{Z'}(\sqrt{c})) E'_{g^{-1}} \pi_{Z'}^\infty(\phi')\\
&=& \sum_{g\in \Gamma}  (U_g\otimes \id_{L^2Z} ) E_g \pi_Z^\infty(g^{-1}\phi)   (\id_{\ell^2\Gamma^\infty} \otimes V\pi_{Z'}(\sqrt{c})) \pi_{Z'}^\infty(g^{-1}\phi') E'_{g^{-1}}\\
&=& \sum_{g\in \Gamma}  (U_g\otimes \id_{L^2Z} ) E_g   (\id_{\ell^2\Gamma^\infty} \otimes (\pi_Z(g^{-1}\phi) V\pi_{Z'}(g^{-1}\phi') )\pi_{Z'}(\sqrt{c}))  E'_{g^{-1}}\\
&=& 0
\end{eqnarray*}
{{since for any $g\in \Gamma$ any $z\in \Supp (\phi)$ and any $z'\in \Supp (\phi')$, we have
\begin{multline*}
d_Z (g^{-1} z, f(g^{-1} z')) \geq d_Z (g^{-1} z, g^{-1}f(z')) - d_Z (g^{-1} f(z'), f(g^{-1} z')) \\= d_Z (z, f(z')) - d_Z (g^{-1} f(z'), f(g^{-1} z')) > (\Prop V + M_1) - M_1,
\end{multline*}}}
{{so that $
d_Z(\supp(g^{-1}\phi), f(\supp(g^{-1}\phi')) >  \Prop V,$
and hence the term $\pi_Z(g^{-1}\phi) V\pi_{Z'}(g^{-1}\phi')$ vanishes. }}Note that in the above computation we have also used the fact that the sum is finite. Thus we get $\Prop W\leq \Prop V$.
We now show that $W$ is a $\Gamma$-equivariant isometry, i.e. for any $h\in \Gamma$,
$$
W E'_h= E_hW
$$
This is shown in the following routine computation:
\begin{eqnarray*}
W E'_h &=&  \sum_{g\in \Gamma} (U_g\otimes \id_{L^2Z})  E_g (\id_{\ell^2\Gamma^\infty} \otimes V\pi_{Z'}(\sqrt{c})) E'_{g^{-1}}E'_h\\
&=& \sum_{g\in \Gamma} (U_g\otimes \id_{L^2Z})  E_g (\id_{\ell^2\Gamma^\infty} \otimes V\pi_{Z'}(\sqrt{c})) E'_{g^{-1}h}\\
&=& \sum_{g'\in \Gamma} (U_{hg'}\otimes \id_{L^2Z})  E_{hg'} (\id_{\ell^2\Gamma^\infty} \otimes V\pi_{Z'}(\sqrt{c})) E'_{(g')^{-1}}\\
&=&\sum_{g'\in \Gamma} ((U_{hg'}\otimes \id_{L^2Z})E_h ) E_{g'} (\id_{\ell^2\Gamma^\infty} \otimes V\pi_{Z'}(\sqrt{c})) E'_{(g')^{-1}}\\
&=&\sum_{g'\in \Gamma} E_h (U_{g'}\otimes \id_{L^2Z}) E_{g'} (\id_{\ell^2\Gamma^\infty} \otimes V\pi_{Z'}(\sqrt{c})) E'_{(g')^{-1}}\\
&=& E_h W
\end{eqnarray*}
where in the second last line we have used the equivariance property of the family of isometries $U_g, g\in \Gamma$ with respect to the right regular action $\rho$ on $\ell^2\Gamma$, i.e.
$$
U_g (\rho_h\otimes \id_{L^2Z})= (\rho_h\otimes \id_{L^2Z})U_{h^{-1}g} \text{ for } g, h\in \Gamma.
$$
 \end{proof}

We also used the more restrictive notion of {Roe-Voiculescu} covering $\Gamma$-isometry.

\begin{definition}\label{VRiso2}\
Let $f:Z'\rightarrow Z$ be a \emph{continuous} $\Gamma$-equivariant coarse map. A bounded operator
$W\in B( \ell^2\Gamma^\infty\otimes L^2Z', \; \ell^2\Gamma^\infty\otimes L^2Z)$
will be called a \emph{{Roe-Voiculescu} covering $\Gamma$-isometry} for $f$, if it satisfies the following properties:
\begin{enumerate}
\item $W$ is a $\Gamma$-equivariant isometry;
  \item $W$ has finite propagation with respect to $f$;
  \item\label{Voiculescu}
  For any $\phi\in C_0(Z)$, we have $W^*\pi^\infty_{Z}(\phi)W -\pi^\infty_{Z'}(\phi\circ f) \in \maK (\ell^2\Gamma^\infty\otimes L^2Z')$.
 \end{enumerate}
\end{definition}
So, a {Roe-Voiculescu} covering $\Gamma$-isometry for $f$ is a Roe covering $\Gamma$-isometry for $f$ which satisfies the extra condition \eqref{Voiculescu}.

\begin{lemma}\label{VR-isometry}
 For any continuous coarse {{and coarsely equivariant}} map $f:Z'\rightarrow Z$, there exists {Roe-Voiculescu} covering $\Gamma$-isometries for $f$.
 \end{lemma}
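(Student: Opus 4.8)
The plan is to re-run the averaging construction from the proof of Lemma~\ref{Roeiso}, but to feed it a non-equivariant isometry that \emph{already} satisfies the Voiculescu-type intertwining relation, and then to check that the $\Gamma$-averaging does not destroy that relation. So condition (2) of Definition~\ref{VRiso2} comes from the finite-propagation part of the classical construction, condition (1) and the finite-propagation estimate come for free from the averaging as in Lemma~\ref{Roeiso}, and only condition (3) requires genuinely new bookkeeping.

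First I would record that, since $f$ is continuous and metrically proper between proper metric spaces, $\phi\circ f\in C_0(Z')$ for every $\phi\in C_0(Z)$, so that $\pi_{Z'}^\infty(\phi\circ f)$ makes sense. Then I would invoke the classical theory of covering isometries for coarse maps --- see \cite{HRbook}[Chapters~6 and~12], which is exactly Voiculescu's theorem combined with the finite-propagation construction --- to obtain a finite-propagation isometry $V$ covering $f$ between the ample representations attached to $L^2Z'$ and $L^2Z$ (amplifying the representations by $\ell^2\N$ if necessary, which is harmless since this amplification is absorbed into the factor $\ell^2\Gamma^\infty$) with the extra property
$$
V^*\pi_Z(\psi)V-\pi_{Z'}(\psi\circ f)\in\maK,\qquad\forall\,\psi\in C_0(Z),
$$
i.e. a non-equivariant Roe--Voiculescu covering isometry for $f$.

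Next, fixing a cut-off function $c\in C_b(Z')$ (available because $\Gamma$ acts properly on $Z'$) and a family $(U_g)_{g\in\Gamma}$ of isometries on $\ell^2\Gamma^\infty$ with $U_g^*U_{g'}=\delta_{g,g'}\id$ and $\sum_g U_gU_g^*=\id$, I would set $W:=\sum_{g\in\Gamma}U_g\otimes X_g\,V\pi_{Z'}(\sqrt c)\,X'_{g^{-1}}$ exactly as in the proof of Lemma~\ref{Roeiso}. The support condition on $c$ together with properness makes the sum locally finite, and the verifications that $W$ is a $\Gamma$-equivariant isometry and that $\Prop(W)$ is controlled by $\Prop(V)$ and the coarse-equivariance constant of $f$ are the verbatim computations of that lemma, giving properties (1) and (2). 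For property~(3) I would expand $W^*\pi_Z^\infty(\phi)W$ using $U_h^*U_g=\delta_{g,h}\id$ and the identity expressing $X_g^*\pi_Z(\phi)X_g$ as $\pi_Z$ of the $g$-translate of $\phi$, then substitute the non-equivariant Voiculescu relation above for the inner expression $V^*\pi_Z(\cdot)V$. The ``diagonal'' part of the resulting sum reassembles, via $\sum_g g\cdot c=1$ and the $\Gamma$-equivariance of $f$ (which makes the $g$-translate of $(g^{-1}\cdot\phi)\circ f$ equal to $\phi\circ f$ for every $g$), to exactly $\pi_{Z'}^\infty(\phi\circ f)$; the remaining ``error'' part is a sum of compact operators whose supports lie in $(g\cdot\supp c)$ intersected with a fixed-radius neighbourhood of $f^{-1}(\supp\phi)$, and since $\supp\phi$ is compact, $f$ is metrically proper and $\Gamma$-equivariant, and $c$ satisfies the cut-off support condition, all but finitely many of these summands vanish, so the error is compact. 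Hence $W^*\pi_Z^\infty(\phi)W-\pi_{Z'}^\infty(\phi\circ f)\in\maK$, which is (3).

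The main obstacle is the non-equivariant input of the second paragraph: producing a single finite-propagation isometry that simultaneously covers $f$ \emph{and} intertwines the multiplication operators modulo compacts is the one substantive ingredient, and I would import it from \cite{HRbook}[Chapter~12] --- just as Lemma~\ref{functorial4} imports its analogue from \cite{SiegelThesis} --- rather than reprove it. A secondary, purely bookkeeping difficulty is the compactness (not merely smallness in norm) of the error term in the last step, which is also the single place where one uses the genuine $\Gamma$-equivariance of $f$ rather than mere coarse equivariance; this is no real restriction, since Definition~\ref{VRiso2} already asks for $\Gamma$-equivariance and in every application in the paper $f$ is the inclusion of a closed $\Gamma$-subspace, and in any case for a merely coarsely equivariant $f$ one still obtains the intertwining relation up to a multiplication operator of small norm.
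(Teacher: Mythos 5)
Your proposal is correct and follows essentially the same route as the paper: obtain a non-equivariant finite-propagation isometry with the Voiculescu intertwining property from \cite{HRbook}[Chapter 12] (folding the $\ell^2\N$-amplification back into $\ell^2\Gamma^\infty$, which the paper does via a unitary $u_\infty$), then apply the cut-off/$(U_g)$ averaging of Lemma \ref{Roeiso} and check condition (3) by the same expansion using $U_g^*U_{g'}=\delta_{g,g'}\id$, the translated Voiculescu relation, and $\sum_g g\,c=1$. Your extra bookkeeping on the compactness of the error term (local finiteness of the sum for compactly supported $\phi$) and your remark that the reassembly to $\pi_{Z'}^\infty(\phi\circ f)$ uses genuine equivariance of $f$ are welcome clarifications of points the paper's computation passes over silently.
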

\begin{proof}\
The Hilbert space $ \ell^2\N\otimes \ell^2\Gamma^\infty \otimes L^2(Z)$ equipped with the representation $\id_{\ell^2\N}\otimes \pi_Z^\infty$ of $C_0(Z)$ is a
very-ample representation (i.e. a countably infinite direct sum of a fixed ample representation), so by \cite{HRbook}[Lemma 12.4.6],
there exists an isometry $V: \ell^2\Gamma^\infty\otimes L^2(Z')\rightarrow \ell^2\N\otimes \ell^2\Gamma^\infty \otimes L^2(Z)$
which has finite propagation and satisfies the condition:
$$
V^*(\id_{\ell^2\N}\otimes \id_{\ell^2\Gamma^\infty}\otimes \pi_Z (\phi)) V - (\id_{\ell^2\Gamma^\infty}\otimes \pi_{Z'} (\phi\circ f))\;  \in\;  \maK({\ell^2\Gamma^\infty}\otimes L^2(Z')),  \quad \forall \phi\in C_0(Z).
$$
Again, this isometry is not $\Gamma$-equivariant in general. We first compose $V$ with a unitary $u_\infty:\ell^2\N\otimes \ell^2\N\rightarrow \ell^2\N$ to
get back from $\ell^2\N \otimes \ell^2\Gamma^\infty$ to $\ell^2\Gamma^\infty$, and obtain the isometry:
$$
\bar{V}:= (u_\infty\otimes\id_{\ell^2\Gamma}\otimes \id_{L^2Z})\circ V: \ell^2\Gamma^\infty \otimes L^2(Z')\rightarrow \ell^2\Gamma^\infty \otimes L^2(Z).
$$
Since $u_\infty\otimes \id_{\ell^2\Gamma}\otimes \id_{L^2Z}$ commutes with the representation $\id_{\ell^2\Gamma^\infty}\otimes \pi_Z$, it is straightforward to check that
$\bar{V}$ has propagation bounded above by $\Prop V$, and it satisfies the intertwining property:
$$
\bar{V}^*(\id_{\ell^2\Gamma^\infty}\otimes \pi_{Z}(\phi))\bar{V}-(\id_{\ell^2\Gamma^\infty}\otimes\pi_{Z'}(\phi\circ f)) \in \maK(\ell^2\Gamma^\infty\otimes L^2Z'), \quad \forall \phi\in C_0(Z).
$$
Finally, {{using a {cut-off} function $c\in C_b(Z')$ as in the proof of Lemma \ref{Roeiso}, we replace $\bar{V}$ by a $\Gamma$-invariant isometry $W$ in the same way as in that proof}}, i.e.
$$
W := \sum_{g\in \Gamma} (U_g\otimes \id_{L^2Z})  E_g \bar{V}(\id_{\ell^2\Gamma^\infty} \otimes \pi_{Z'}(\sqrt{c})) E'_{g^{-1}}.
$$
That the isometry $W$ is a Roe covering $\Gamma$-isometry for $f$ is clear, see again the proof of Lemma  \ref{Roeiso}.  It remains to verify Condition \eqref{Voiculescu}.
We have for $\phi\in C_0(Z)$:
\begin{eqnarray*}
W^*\pi_Z^\infty(\phi)W &=&  [\sum_{g\in \Gamma} (U_g\otimes \id_{L^2Z})  E_g \bar{V}(\id_{\ell^2\Gamma^\infty} \otimes \pi_{Z'}(\sqrt{c})) E'_{g^{-1}}]^*\\
&& \times(\id_{\ell^2\Gamma^\infty}\otimes \pi_{Z}(\phi))[\sum_{h\in \Gamma} (U_h\otimes \id_{L^2Z})  E_h \bar{V}(\id_{\ell^2\Gamma^\infty} \otimes \pi_{Z'}(\sqrt{c})) E'_{h^{-1}}]\\
&=& [\sum_{g\in \Gamma} (E'_{g^{-1}})^*(\id_{\ell^2\Gamma^\infty} \otimes \pi_{Z'}(\sqrt{c}))\bar{V}^*E^*_g (U^*_g\otimes \id_{L^2Z})]\\
&  & \times(\id_{\ell^2\Gamma^\infty}\otimes \pi_{Z}(\phi))[\sum_{h\in \Gamma} (U_h\otimes \id_{L^2Z})  E_h \bar{V}(\id_{\ell^2\Gamma^\infty} \otimes \pi_{Z'}(\sqrt{c})) E'_{h^{-1}}\\
&=& \sum_{g\in\Gamma} (E'_{g^{-1}})^*(\id_{\ell^2\Gamma^\infty} \otimes \pi_{Z'}(\sqrt{c}))\bar{V}^*E^*_g (U^*_g\otimes \id_{L^2Z})\\
&& \times(\id_{\ell^2\Gamma^\infty}\otimes \pi_{Z}(\phi)) (U_g\otimes \id_{L^2Z})  E_g \bar{V}(\id_{\ell^2\Gamma^\infty} \otimes \pi_{Z'}(\sqrt{c})) E'_{g^{-1}}
\end{eqnarray*}
which gives
\begin{eqnarray*}
W^*\pi_Z^\infty(\phi)W
& = & \sum_{g\in\Gamma} (E'_{g^{-1}})^*(\id_{\ell^2\Gamma^\infty} \otimes \pi_{Z'}(\sqrt{c}))\bar{V}^*(E^*_g (\id_{\ell^2\Gamma^\infty}\otimes \pi_{Z}(\phi)) E_g) \bar{V}(\id_{\ell^2\Gamma^\infty} \otimes \pi_{Z'}(\sqrt{c})) E'_{g^{-1}}\\
&=& \sum_{g\in\Gamma} (E'_{g^{-1}})^*(\id_{\ell^2\Gamma^\infty} \otimes \pi_{Z'}(\sqrt{c}))\bar{V}^* (\id_{\ell^2\Gamma^\infty}\otimes \pi_{Z}(g^{-1}\phi)) \bar{V}(\id_{\ell^2\Gamma^\infty} \otimes \pi_{Z'}(\sqrt{c})) E'_{g^{-1}}\\
& \sim & \sum_{g\in\Gamma} (E'_{g^{-1}})^*(\id_{\ell^2\Gamma^\infty} \otimes \pi_{Z'}(\sqrt{c})) (\id_{\ell^2\Gamma^\infty}\otimes \pi_{Z'}(g^{-1}\phi\circ f)) (\id_{\ell^2\Gamma^\infty} \otimes \pi_{Z'}(\sqrt{c})) E'_{g^{-1}}\\
&=& \sum_{g\in\Gamma} (E'_{g^{-1}})^*(\id_{\ell^2\Gamma^\infty} \otimes \pi_{Z'}({c\cdot}(g^{-1}\phi\circ f))  E'_{g^{-1}}\\
&=& (\id_{\ell^2\Gamma^\infty}\otimes \pi_{Z'}(\phi\circ f){)}\sum_{g\in \Gamma}  (\id_{\ell^2\Gamma^\infty}\otimes \pi_{Z'}(g c))\\
&=& (\id_{\ell^2\Gamma^\infty}\otimes \pi_{Z'}(\phi\circ f)) \\
&=&\pi_{Z'}^\infty(\phi\circ f)
\end{eqnarray*}

%
%
\end{proof}

\section{Kasparov's homological lemma}

We have used a classical result due to Kasparov but in the context of  groupoid equivariant $KK$-theory (see \cite{Kasparov1}[Section 7, Lemma 2]). This is a classical result, well-known to experts, but we add it for completeness. So, $G$ is here an \'etale Hausdorff locally compact groupoid and we assume for simplicity that the space of units $X$ is compact.
 Let $A, B$ be unital nuclear separable $G$-algebras and suppose that $\maH_G$ is an absorbing $G$-Hilbert $B$-module, i.e. any countably generated $G$-Hilbert $B$-module is isometric to an
orthocomplemented $G$-submodule
of $\maH_G$. Denote by $E_G(A, B)$ the set of triples $(\pi, \maH_G, F)$, where $\pi: A\rightarrow \maL_B(\maH_G)$ is a $G$-equivariant
representation and $F$ is a $G$-invariant operator modulo {compact oeprators}, which satisfies the usual conditions of a $KK$-cycle:
$$
[F, \pi(a)], \quad \pi(a)(F^2-I),\quad \pi(a)(F-F^*) \in \maK(\maH_G),\quad\quad \forall a\in A
$$
The equivalence relation on such triples is generated by unitary equivalence by $G$-invariant unitaries, addition of degenerate cycles and
operator homotopy.

\begin{lemma}[Kasparov's homological $KK_G$-equivalence lemma ]\label{KK_G-equiv}
Let $A, B$ be nuclear separable $G$-algebras and consider a $G$-equivariant representation
$\pi_1: A\rightarrow \maL_B(\maH_G)$. If $(\pi_2, \maH_G, F) \in E_G(A, B)$
and there exists a $G$-invariant {unitary} $S\in \maL(\maH_G)$ such that
$$
S^*\pi_1(a)S- \pi_2(a) \in \maK(\maH_G)
$$
then the cycles $(\pi_2, \maH_G, F)$ and $(\pi_1, \maH_G, SFS^*)$ define the same $KK_G$-class.
\end{lemma}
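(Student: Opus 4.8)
The plan is to split the statement into two moves: a trivial reduction by conjugating with $S$, and then the genuine content, which is a compact--perturbation principle for the \emph{representation}. First observe that $(\pi_1,\maH_G,SFS^*)$ is itself a legitimate element of $E_G(A,B)$: since $S$ is a $G$-invariant unitary, $SFS^*$ is again $G$-invariant modulo $\maK(\maH_G)$, and the three cycle conditions for $(\pi_1,\maH_G,SFS^*)$ follow from those of $(\pi_2,\maH_G,F)$ together with the hypothesis $S^*\pi_1(a)S-\pi_2(a)\in\maK(\maH_G)$, using only that $\maK(\maH_G)$ is an ideal in $\maL_B(\maH_G)$. Next, conjugating the cycle $(\pi_2,\maH_G,F)$ by the $G$-invariant unitary $S$ --- one of the generating moves of the equivalence relation --- shows that $(\pi_2,\maH_G,F)$ and $\bigl(S\pi_2(\cdot)S^*,\maH_G,SFS^*\bigr)$ define the same $KK_G$-class. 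Writing $\phi_0:=S\pi_2(\cdot)S^*$, $\phi_1:=\pi_1$, $T:=SFS^*$, and noting $\phi_0(a)-\phi_1(a)=S\bigl(\pi_2(a)-S^*\pi_1(a)S\bigr)S^*\in\maK(\maH_G)$, it remains to prove: if $\phi_0,\phi_1:A\to\maL_B(\maH_G)$ are $G$-equivariant representations with $\phi_0(a)-\phi_1(a)\in\maK(\maH_G)$ for all $a$, and $(\phi_0,\maH_G,T)\in E_G(A,B)$, then $(\phi_1,\maH_G,T)$ is again a cycle and $[(\phi_0,\maH_G,T)]=[(\phi_1,\maH_G,T)]$.

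For this I would build an explicit homotopy over $C([0,1],B)$. Let $\maE$ be the mapping--cylinder module $\maE=\{\, \xi\in C([0,1],\maH_G\oplus\maH_G):\ \xi(0)\in\maH_G\oplus 0,\ \xi(1)\in 0\oplus\maH_G \,\}$, a countably generated Hilbert $C([0,1],B)$-module with the pointwise $\Gamma$-action; equip it with the \emph{constant} representation $\Phi:=\phi_0\oplus\phi_1$ and with the operator $\mathcal{F}(t):=R_t\,(T\oplus\id_{\maH_G})\,R_t^*$, where $R_t=\left(\begin{smallmatrix}\cos(\pi t/2)&-\sin(\pi t/2)\\ \sin(\pi t/2)&\cos(\pi t/2)\end{smallmatrix}\right)$, so that $\mathcal{F}(0)=T\oplus\id$ and $\mathcal{F}(1)=\id\oplus T$ are both block--diagonal. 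One checks that $\mathcal{F}$ preserves $\maE$ (constraints occur only at the endpoints, where $\mathcal{F}$ is block--diagonal) and that $(\Phi,\maE,\mathcal{F})\in E_G(A,C([0,1],B))$: conjugating by $R_t$ rewrites $[\mathcal{F}(t),\Phi(a)]$, $\Phi(a)(\mathcal{F}(t)^2-1)$ and $\Phi(a)(\mathcal{F}(t)-\mathcal{F}(t)^*)$ as $R_t$-conjugates of $[T,\phi_0(a)]\oplus 0$, $\phi_0(a)(T^2-1)\oplus 0$ and $\phi_0(a)(T-T^*)\oplus 0$ respectively, plus error terms involving the difference $R_t^*(\phi_0\oplus\phi_1)(a)R_t-(\phi_0\oplus\phi_1)(a)$, which is $\maK(\maH_G)$-valued \emph{precisely because} $\phi_0-\phi_1\in\maK(\maH_G)$ --- this is the one point where the hypothesis is used. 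One then verifies these continuous, compact-operator--valued families actually lie in $\maK_{C([0,1],B)}(\maE)$ by tracking the (vanishing) block structure of all terms at $t=0$ and $t=1$. Evaluating the cycle at the endpoints returns $(\phi_0,\maH_G,T)$ and $(\phi_1,\maH_G,T)$, so they are homotopic and hence define the same $KK_G$-class.

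Assembling, applying the principle with $\phi_0=S\pi_2(\cdot)S^*$, $\phi_1=\pi_1$, $T=SFS^*$ gives $(\pi_1,\maH_G,SFS^*)\sim\bigl(S\pi_2(\cdot)S^*,\maH_G,SFS^*\bigr)\sim(\pi_2,\maH_G,F)$, which is the assertion. The main obstacle will be the bookkeeping in the middle paragraph: checking that the ``modulo compacts'' conditions of the homotopy genuinely hold as membership in $\maK_{C([0,1],B)}(\maE)$, since the mapping--cylinder module $\maE$ is \emph{not} orthocomplemented, so one cannot simply restrict compact operators and must instead verify the block--diagonal/vanishing behaviour of $\mathcal{F}$, $\Phi$ and their commutators at $t=0,1$ by hand. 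One also invokes the standard fact --- valid for groupoid-equivariant $KK$ by Kasparov --- that homotopy over $C([0,1],B)$ generates the same equivalence relation as unitary equivalence, addition of degenerate cycles and operator homotopy used in the definition of $E_G(A,B)$.
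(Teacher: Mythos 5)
Your proposal is correct, and its first move --- conjugating $(\pi_2,\maH_G,F)$ by the $G$-invariant unitary $S$ so that everything reduces to two representations $\phi_0=S\pi_2(\cdot)S^*$ and $\phi_1=\pi_1$ differing by compacts, with the common operator $T=SFS^*$ --- is exactly the paper's own reduction. Where you genuinely diverge is the core step. The paper stays entirely inside the generating moves of its definition of the $KK_G$-relation: it passes to pairs $(\phi,P)$ with $P=\tfrac12(T+\id)$, notes that $(\phi_1,P)$ and $(\phi_0,P)$ are ``homological'' (i.e. $P\phi_1(a)\sim P\phi_0(a)$ modulo $\maK(\maH_G)$, which is immediate from $\phi_0(a)-\phi_1(a)\in\maK(\maH_G)$), and then invokes Kasparov's explicit operator homotopy
$\frac{1}{1+t^2}\left(\begin{smallmatrix} P_1 & tP_1P_2\\ tP_2P_1 & t^2P_2\end{smallmatrix}\right)$, $0\le t\le\infty$, on $\maH_G\oplus\maH_G$ with the fixed representation $\phi_1\oplus\phi_0$; this connects $(\phi_1,P)\oplus(\phi_0,0)$ to $(\phi_1,0)\oplus(\phi_0,P)$, which is literally an operator homotopy after adding degenerate cycles, so no further machinery is needed. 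You instead build a genuine homotopy over $C([0,1],B)$ by rotating $T\oplus\id$ on a mapping-cylinder module. That construction does work: writing $\maE\cong\bigl(C_0([0,1))\otimes\maH_G\bigr)\oplus\bigl(C_0((0,1])\otimes\maH_G\bigr)$ identifies $\maK_{C([0,1],B)}(\maE)$ with norm-continuous compact-operator-valued $2\times2$ matrices whose $(1,1)$-entry vanishes at $t=1$, whose $(2,2)$-entry vanishes at $t=0$, and whose off-diagonal entries vanish at both endpoints; all your error families ($[\mathcal F,\Phi(a)]$, $\Phi(a)(\mathcal F^2-1)$, $\Phi(a)(\mathcal F-\mathcal F^*)$, and the $G$-invariance defects) are block-diagonal of exactly this shape at $t=0,1$, so the bookkeeping you flag as the main obstacle does go through. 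The real price of your route is the final invocation: since the paper \emph{defines} the equivalence on $E_G(A,B)$ as generated by invariant-unitary equivalence, degenerate cycles and operator homotopy, concluding from a $C([0,1],B)$-homotopy requires the nontrivial theorem that homotopy implies this relation in the groupoid-equivariant setting (Kasparov for groups, Le Gall for groupoids) --- an input the paper's argument avoids altogether and which you should cite explicitly rather than call standard. In short: both proofs are valid; the paper's is more elementary relative to its own definitions, yours is more geometric but rests on a heavier external fact.
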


\begin{proof}
The proof in \cite{Kasparov1}[pp 561-562] can be used to prove the statement as follows. Consider a pair $(\phi, P)$, where $\phi: A\rightarrow \maL_B(\maH_G)$ is a $G$-equivariant representation and
$P\in \maL_B(\maH_G)$ is a {$G$-invariant operator modulo {compact operators}} such that we have:
$$
[P, \phi(a)], \quad \phi(a)(P^2-P),\quad \phi(a)(P-P^*) \in \maK(\maH_G)\quad\quad \forall a\in A
$$

Any such pair gives rise to a $KK_G$-class given by $(\phi, \maH_G, 2P-\id) $ and conversely a triple $(\pi, \maH_G,F)$ representing a $KK_G$-class gives rise to a pair $(\pi, (F+1)/2)$ satisfying the above conditions. Two such pairs $(\phi_1, P_1)$ and $(\phi_2, P_2)$ are called homological if $P_1\phi_1(a)\sim P_2\phi_2(a), \forall a\in A$.
Homological pairs give rise to the same $KK_G$-class by the proof of \cite{Kasparov1}, Section 7, Lemma 2. {Indeed, an explicit operator homotopy between the $KK_G$-classes
induced by $(\phi_1,P_1)\oplus (\phi_2, 0)$ and $(\phi_1,0)\oplus(\phi_2,P_2)$ is induced by the operator homotopy for pairs:
$$
\left(\begin{bmatrix} \phi_1 & 0\\ 0 & \phi_2 \end{bmatrix},\frac{1}{1+t^2} \begin{bmatrix} P_1   & tP_1P_2\\ tP_2P_1 & t^2P_2  \end{bmatrix}\right) \quad \text{ for } \quad 0\leq t \leq \infty
$$}
{Note that the direct sum $\maH_G\oplus \maH_G$ is endowed with the diagonal $G$-action, with respect to which the above operator matrix is $G$-equivariant up to {compact operators}. Thus the operator
homotopy goes through well-defined $KK_G$-classes. We now claim that $(\pi_1,\maH_G, SFS^*)$ and $(S\pi_2 S^*, \maH_G, SFS^*)$ are in the same $KK_G$-class.
Indeed, if we define:
$$
P_1= \frac{1}{2}(SFS^*+I)= P_2,\quad \phi_1= \pi_1,\quad \phi_2= S\pi_2S^*,
$$
then the pairs $(P_1,\phi_1)$ and $(P_2, \phi_2)$ are homological, {and} then give rise to the $KK_G$-classes
of $(\pi_1, \maH_G, SFS^*)$ and $(S\pi_2 S^*, \maH_G, SFS^*)$, respectively.
We then have a chain of $KK_G$-equivalences:
$$
(\pi_2, \maH_G, F)\sim (S\pi_2S^*, \maH_G, SFS^*)\sim (\pi_1,\maH_G, SFS^*) 
$$}


\end{proof}


\begin{thebibliography}{EHS85}
\bibitem[BC:00]{BaumConnes} P. Baum, A. Connes  {\em Geometric K-theory for Lie groups and foliations}, Enseign. Math. (2) 46 (2000), no. 1-2, 3-42.
\bibitem[BCH:93]{BaumConnesHigson} P. Baum, A. Connes, N. Higson {\em Classifying space for proper actions and K-theory of group C*-algebras}. C*-algebras: 1943-1993 (San Antonio, TX, 1993), 240-291, Contemp. Math., 167, Amer. Math. Soc., Providence, RI, 1994.
\bibitem[BGL:17]{BGL} Benameur, Moulay-Tahar; Gorokhovsky, Alexander; Leichtnam, Eric {\em The higher twisted index theorem for foliations.} J. Funct. Anal. 273 (2017), no. 2, 496-558.
\bibitem[BN:04]{BN} Benameur, M.-T.; Nistor, V. {\em Residues and homology for pseudodifferential operators on foliations.} Math. Scand. 94 (2004), no. 1, 75-108.
\bibitem[BP:09]{BenameurPiazza} M.-T. Benameur and P. Piazza, {\em Index, eta and rho invariants on foliated bundles.}
Ast\'erisque No. 327 (2009), 201-287 (2010).
\bibitem[BR:15]{BenameurRoyJFA} M.-T. Benameur and I. Roy, {\em The Higson-Roe exact sequence and $\ell^2$ eta invariants.} J. Funct. Anal. 268 (2015), no. 4, 974-1031.
\bibitem[BR1:20]{BenameurRoyI} M.-T. Benameur and I. Roy, {\em The Higson-Roe sequence for \'etale groupoids. I. Dual algebras and compatibility with the BC map}, Journal of Noncommutative Geometry, Vol. 14, Issue 1, 2020, arXiv preprint arXiv:1801.06040.
\bibitem[BR2:20]{ExtendedPPV} M.-T. Benameur and I. Roy, {\em An equivariant PPV theorem and Paschke-Higson duality}, arXiv preprint https://arxiv.org/abs/2001.09811.
\bibitem[Co:94]{ConnesBook}
A. Connes, {\em Noncommutative Geometry}, Academic Press, San Diego, CA, 1994, 661 p., ISBN 0-12-185860-X.
\bibitem[G:14]{G14}E. Guentner, {\em Permanence in Coarse Geometry}, Hart K., van Mill J., Simon P. (eds) Recent Progress in General Topology III. Atlantis Press, Paris, 2014
\bibitem[GWY:17]{GWY17} E. Guentner, R. Willett and G. Yu, {\em Dynamic asymptotic dimension: relation to dynamics, topology, coarse geometry, and $C^*$-algebras.} Math. Ann. 367 (2017), no. 1-2, 785-829.
\bibitem[Hig:95]{HigsonPaschke} Higson, Nigel, {\em $C^*$-algebra extension theory and duality}, Journal of Functional Analysis, 129, 349-363, 1995
\bibitem[HPR:97]{HigsonPedersenRoe} Nigel Higson, Erik Kjaer Pedersen, and John Roe, $C^*$-algebras and controlled topology, K-
Theory. An Interdisciplinary Journal for the Development, Application, and Influence of K-Theory in the Mathematical Sciences 11 (1997), no. 3, 209-239
\bibitem[HR:00]{HRbook} Nigel Higson and John Roe,{\em Analytic K-homology}, Oxford Mathematical Monographs,
Oxford University Press, Oxford, 2000.
\bibitem[HR1:05]{HR-I} N. Higson and J. Roe, {\em Mapping surgery to analysis. I. Analytic signatures.} K-Theory 33 (2005), no. 4, 277-299.
\bibitem[HR2:05]{HR-II} N. Higson and J. Roe, {\em Mapping surgery to analysis. II. Geometric signatures.} K-Theory 33 (2005), no. 4, 301-324.
\bibitem[HR3:05]{HR-III} N. Higson and J. Roe, {\em Mapping surgery to analysis. III. Exact sequences.} K-Theory 33 (2005), no. 4, 325-346.
\bibitem[Ka:81]{Kasparov1} Kasparov, G.G. {\em Operator $K$ Functor and extension of $C^*$-algebras}, Mathematics of the USSR-Izvestiya 16. 513 (reprinted in English IOPScience, 2007)
\bibitem[Ka:88]{Kasparov2} Kasparov, G.G. {\em Equivariant $KK$-theory and the Novikov conjecture}, Invent. Math. 91, 147-201, 1988
\bibitem[Pa:81]{Paschke} W. L. Paschke, {\em {$K$-theory for commutators in the Calkin algebra}}, Pacific J. Math. 95, (1981), 427-434.
\bibitem[PPV:79]{PPV} M. Pimsner, S.Popa, D. {V}oiculescu, {\em {Homogeneous $C^*$-extensions of $C_0(X)\otimes K(H)$, Part I}}, J. Operator Theory I (1979),55-108.
\bibitem[Si:12]{SiegelThesis} Siegel, P. {\em {Homological calculations with the analytic structure group}}, PhD Thesis, Penn State Uninversity, 2012.
\bibitem[Tu:99]{TuHyper} J-L. Tu,{\em La conjecture de Novikov pour les feuilletages hyperboliques}, K theory 16: 129-184, 1999

\bibitem[Z-1:19]{Zenobi19} V. F. Zenobi, {\em Adiabatic groupoid and secondary invariants in K-theory}. Adv. Math. 347 (2019), 940-1001.

\bibitem[Z-2:19]{ZenobiPreprint} V. F. Zenobi, {\em The adiabatic groupoid and the Higson-Roe exact sequence}. Preprint, arXiv:1901.05081


\end{thebibliography}
\end{document}